\documentclass[a4paper,11pt,final]{article}

\usepackage{graphicx}
\usepackage{amssymb,amsmath,amsthm,mathrsfs,xfrac}
\usepackage{enumitem}
\usepackage{a4wide}
\usepackage{hyperref}
\usepackage{array}

\setlength{\delimitershortfall}{-0.1pt}
\numberwithin{equation}{section}
\allowdisplaybreaks[4]

\newtheorem{proposition}{Proposition}[section]
\newtheorem{theorem}[proposition]{Theorem}
\newtheorem{notation}[proposition]{Notation}
\newtheorem{lemma}[proposition]{Lemma}
\newtheorem{definition}[proposition]{Definition}
\newtheorem{corollary}[proposition]{Corollary}
\newtheorem{remark}[proposition]{Remark}

\renewenvironment{proof}{\smallskip\noindent\emph{\textbf{Proof.}}%
  \hspace{1pt}}{\hspace{-5pt}{\nobreak\quad\nobreak\hfill\nobreak%
    $\square$\vspace{2pt}\par}\smallskip\goodbreak}

\newenvironment{proofof}[1]{\smallskip\noindent{\textbf{Proof~of~#1.}}%
  \hspace{1pt}}{\hspace{-5pt}{\nobreak\quad\nobreak\hfill\nobreak%
    $\square$\vspace{2pt}\par}\smallskip\goodbreak}

\newcommand{\C}[1]{\mathbf{C}^{#1}}
\newcommand{\AC}{\mathbf{AC}}
\newcommand{\Cc}[1]{\mathbf{C}_c^{#1}}

\newcommand{\BV}{\mathbf{BV}}
\newcommand{\SBV}{\mathbf{SBV}}
\newcommand{\SBVloc}{\mathbf{SBV}_{\mathbf{loc}}}
\newcommand{\BVloc}{\mathbf{BV}_{\mathbf{loc}}}

\newcommand{\Lloc}[1]{{\mathbf{L}_{\mathbf{loc}}^{#1}}}
\newcommand{\W}[2]{{\mathbf{W}^{#1,#2}}}
\newcommand{\Wloc}[2]{{\mathbf{W}_{\mathbf{loc}}^{#1,#2}}}
\newcommand{\modulo}[1]{{\left|#1\right|}}
\newcommand{\norma}[1]{{\left\|#1\right\|}}
\newcommand{\caratt}[1]{{\chi_{\strut#1}}}
\newcommand{\reali}{{\mathbb{R}}}
\newcommand{\naturali}{{\mathbb{N}}}

\newcommand{\sgn}{\mathop{\rm sgn}}
\newcommand{\card}{\mathop{\rm card}}

\renewcommand{\L}[1]{{\mathbf{L}^#1}}

\renewcommand{\epsilon}{\varepsilon}
\renewcommand{\phi}{\varphi}
\renewcommand{\theta}{\vartheta}

\renewcommand{\d}[1]{\mathinner{\mathrm{d}{#1}}}

\newcommand{\Leb}{\mathinner\mathfrak{Leb}}

\renewcommand{\SS}{S^{{\scriptscriptstyle HJ}}}
\newcommand{\II}{I^{\scriptscriptstyle HJ}}
\renewcommand{\ss}{S^{\scriptscriptstyle CL}}
\newcommand{\ii}{I^{\scriptscriptstyle CL}}

\makeatletter
\let\@fnsymbol\@arabic
\makeatother



\title{Initial Data Identification in\\Conservation Laws and Hamilton--Jacobi Equations}

\author{Rinaldo M.~Colombo$^1$ \and Vincent Perrollaz$^2$}

\begin{document}

\maketitle

\footnotetext[1]{INdAM Unit, University of Brescia,
  Italy. \texttt{rinaldo.colombo@unibs.it}}

\footnotetext[2]{Institut Denis Poisson, Universit\'e de Tours, CNRS
  UMR 7013, Universit\'e d'Orl\'eans,\\
  \texttt{vincent.perrollaz@lmpt.univ-tours.fr}}

\begin{abstract}

  \noindent In the scalar 1D case, conservation laws and
  Hamilton--Jacobi equations are deeply related. For both, we
  characterize those profiles that can be attained as solutions at a
  given positive time corresponding to at least one initial
  datum. Then, for each of the two equations, we precisely identify
  all those initial data yielding a solution that coincide with a
  given profile at that positive time. Various topological and
  geometrical properties of the set of these initial data are then
  proved.

  \medskip

  \noindent\textit{2000~Mathematics Subject Classification:} 35L65, 35F21,
  93B30, 35R30.

  \medskip

  \noindent\textit{Keywords:} Inverse design; Conservation Laws;
  Hamilton--Jacobi Equations; Entropy Solutions; Viscosity Solutions.

\end{abstract}


\section{Introduction}
\label{sec:Intro}

Under suitable conditions on the flow $f \colon \reali \to \reali$ and
on the initial datum, solutions to a scalar conservation law in $1$
space dimension, namely to
\begin{equation}
  \label{eq:1}
  \partial_t u + \partial_x f (u) = 0 \,,
\end{equation}
are known to be obtained through $u = \partial_x U$ from solutions to
the Hamilton--Jacobi equation
\begin{equation}
  \label{eq:35}
  \partial_t U + f (\partial_x U) = 0 \,.
\end{equation}
A peculiar feature of these equations is their irreversibility. In
particular, in the case of~\eqref{eq:1}, inexorable shock formations
cause an unavoidable loss of information, so that different initial
data may well evolve into the same profile. Usual identification
techniques, often based on linearizations or fixed point arguments,
have no chances to be effective when dealing with~\eqref{eq:1}
or~\eqref{eq:35}.

Below, we provide a full characterization of the set of the initial
data for~\eqref{eq:1}, respectively~\eqref{eq:35}, that evolve into a
given profile. Geometric and topological properties of this set are
also obtained. To this aim, a refinement of the results
in~\cite{KarlsenRisebro2002}, see also~\cite{CorriasFalconeNtalini,
  Kruzkov1964}, on the relation between~\eqref{eq:1} and~\eqref{eq:35}
had to be obtained.

For any suitable initial datum $u_o$, we denote by
$(t,x) \to \ss_t u_o (x)$ the weak entropy solution
to~(\ref{eq:1}). Symmetrically, we denote by $(t,x) \to \SS_t U_o (x)$
the viscosity solution to~\eqref{eq:35}. Below, we consider the case
of a uniformly convex $\C2$ flux $f$ and we obtain complete
characterizations of both sets
\begin{equation}
  \label{eq:2}
  \begin{array}{rcl}
    \ii_T (w)
    & :=
    & \left\{
      u_o \in \L\infty (\reali; \reali) \colon \ss_T u_o = w
      \right\}
      \qquad \mbox{ and }
    \\[3pt]
    \II_T (W)
    & :=
    & \left\{
      U_o \in \W{1}{\infty} (\reali; \reali) \colon \SS_T U_o = W
      \right\}
  \end{array}
\end{equation}
and we use the notation $I_T$ whenever we refer to both sets
in~\eqref{eq:2}.

First, we identify those profiles such that the corresponding set
$I_T$ is non empty. This proof is constructive, in the sense that an
initial datum in $I_T$ is explicitly constructed, see
Theorem~\ref{thm:extremal}. Here, we consider in detail the case of
the conservation law~\eqref{eq:1}. A key role is played by the decay
of rarefaction waves, a phenomenon typically described through Oleinik
decay estimates that goes back to~\cite{Oleinik1957Originale}, was
recently improved in~\cite{Glass}, extended to systems of conservation
laws in~\cite{BressanDecay} and of balance laws
in~\cite{CleopatraKonstantina}, see also the reference
texts~\cite[Chapter~6, Ex.5]{BressanLectureNotes}
and~\cite[Theorem~11.2.1]{DafermosBook}. For related problems dealing
with the reachable set of~\eqref{eq:1}, also in the case of the
initial -- boundary value problem, we refer to~\cite{AnconaMarson98,
  Horsin1998} and~\cite{AdimurthiGhoshalGowda2014}.

Once $I_T$ is ensured to be non empty, in its characterization as well
as in establishing its properties a key role is played by two sets,
say $X_i$ and $X_{ii}$, whose precise definitions are
in~\eqref{eq:22}. For $x$ varying in the former one, $X_i$, the value
attained at $x$ by any initial datum in $I_T$ is essentially uniquely
determined. On the contrary, for $x$ varying in the latter one,
$X_{ii}$, the value attained at $x$ by any initial datum in $I_T$ is
subject to rather loose constraints. Moreover, coherently with the
finite propagation speed typical of~\eqref{eq:1} and~\eqref{eq:35},
the values attained at $x$ by any initial datum in $I_T$ on each of
the different connected components of $X_i$ and $X_{ii}$ are entirely
independent from each other.

Instrumental in these proofs is the ability to go back and forth
between solutions to~\eqref{eq:1} and solutions to~\eqref{eq:35}. To
this aim, we needed to complete the results
in~\cite{KarlsenRisebro2002} that deal with the connection
from~\eqref{eq:35} to~\eqref{eq:1}. Indeed, Proposition~\ref{prop:sol}
details how to pass from solutions to~\eqref{eq:1} to solutions to~\eqref{eq:35}.

On the basis of the obtained characterizations, several properties of
$\ii_T (w)$ are then proved. First, we re-obtain its convexity, which
was already stated in~\cite{GosseZuazua}. Then, the unique extreme
point of $\ii_T (w)$ is fully characterized and we prove that,
remarkably, this set is a cone admitting no finite dimensional
extremal faces.

The characterization below directly shows that as soon as $\ii_T$ is
non empty, then also $\ii_T \cap \BV (\reali; \reali)$ is non empty,
meaning that any profile reached by an initial datum with unbounded
total variation can also be reached by a (different) initial datum in
$\BV$. Moreover, we prove that $\ii_t$ always contains one sided
Lipschitz continuous functions but more regular initial data may also
be available. The initial datum constructed \emph{``prolonging
  backwards all shocks''} yields a solution whose interaction
potential~\cite[Formula~(10.10)]{BressanLectureNotes} is
\emph{constant} on $\left]0, T\right[$.

\smallskip

Further motivations for the present study are provided by parameter
identification or inverse problems based on~\eqref{eq:1}
or~\eqref{eq:35}. In particular, we defer to the related
paper~\cite{GosseZuazua} that motivates the present problem through
applications to the study of sonic booms in~\eqref{eq:1}, also
providing several illustrative examples and visualizations. In the
case of~\eqref{eq:35}, $U$ is typically the value function associated
to a time reversed control problem, $f$ being related to the dynamics
and to the running cost, with $U_o$ playing the role of the terminal
cost. Here, the present result amounts to characterizing the terminal
cost corresponding to given initial cost,
see~\cite[Section~10.3]{Evans} for further connections to optimal
control problems. The present analytic results can also help in
numerical investigations such as those in~\cite{AllahverdiPozoZuazua,
  CastroPalaciosZuazua2008, LecarosZuazua2014, LecarosZuazua2016}.

\smallskip

Sections~\ref{subs:Notation} to~\ref{subs:Geo} collect the analytic
results, while all proofs are deferred to
sections~\ref{sec:proof:Notation} to~\ref{sec:proof:Geo}

\section{Notation and Preliminary Results}
\label{subs:Notation}

Throughout, $T$ is fixed and strictly positive. Below, we mostly refer
to~\cite[\S~3.2]{AmbrosioFuscoPallara} for results about $\BV$
functions. In Section~\ref{sec:proof:Notation} we briefly recall the
definition and the min properties of $\SBV (\reali; \reali)$, refer
to~\cite{AmbrosioDeLellis2004} or~\cite[\S~1.7]{DafermosBook} for more
details. As usual, we also use functions $u$ in
$\BVloc (\reali; \reali)$, respectively in $\SBVloc (\reali; \reali)$,
meaning that the restriction $u_{\vert I}$ of $u$ to any bounded real
interval $I$ is in $\BV (I; \reali)$, respectively in
$\SBV (I; \reali)$.  If $u \in \BVloc (\reali; \reali)$, then we set
$u (x\pm) = \lim_{\xi\to x\pm} u (\xi)$ and we convene that we choose
as $u$ the left continuous representative of its class, so that
$u (x) = u (x-)$.

We assume the following condition on the function
defining~\eqref{eq:1} or~\eqref{eq:35}, where $c$ is a suitable
positive constant:
\begin{equation}
  \label{eq:H}
  \text{\textbf{(F):} \qquad$f \in \C2 (\reali; \reali)$ is such that
    $f'' \geq c >0 $ and
    $f (0) = \min_{\reali} f = 0$.}
\end{equation}
Clearly, the latter part of the above condition is not restrictive,
since it can be achieved through \emph{ad hoc} translations of the $u$
or $\partial_x U$ variable and of the flux $f$.

As general references on the theory of scalar conservation laws we
use~\cite[Chapter~6]{BressanLectureNotes},
\cite[Chapter~XI]{DafermosBook} or~\cite[Vol.~1,
Chapter~2]{SerreBooks}.  As specified in Definition~\ref{def:HCL}
below, by \emph{solution} to~\eqref{eq:1}, we always mean a weak
entropy solution in the sense of~\cite[\S~4.4]{BressanLectureNotes},
see also~\cite[Definition~6.2.1]{DafermosBook}
or~\cite[Definition~2.3.3]{SerreBooks}.

\begin{definition}
  \label{def:HCL}
  A map
  $u \in \L\infty ([0,T] \times \reali; \reali) \cap \C0 ([0,T];
  \Lloc1 (\reali; \reali))$ is a \emph{weak entropy solution}
  to~\eqref{eq:1} if
  \begin{displaymath}
    \iint_{\reali^2}
    \left(
      \modulo{u - k} \, \partial_t \phi
      +
      \sgn (u-k) \, \left(f (u) - f (k)\right) \, \partial_x \phi
    \right)
    \d{t} \d{x} \geq 0
    \qquad
    \begin{array}{@{}r@{\,}c@{\,}l@{}}
      \forall \phi
      & \in
      & \Cc1 (\reali^+ \times \reali; \reali^+) \,,
      \\
      \forall k
      & \in
      & \reali \,.
    \end{array}
  \end{displaymath}
\end{definition}

\noindent Consider the Cauchy problem for~\eqref{eq:1} with an
$\L\infty$ initial datum assigned at time $t=0$. Then,
by~\cite[Theorem~16.1]{Smoller}, condition~\eqref{eq:H} ensures that
as soon as a weak entropy solution exists, then it has locally bounded
total variation in space at any positive time.

Concerning~\eqref{eq:35}, we use the standard definition of viscosity
solution based on super-solutions and sub-solutions,
see~\cite[Chapter~10]{Evans}, \cite[Definition~I.1]{CrandallLions1983}
or~\cite[Section~1]{KarlsenRisebro2002}.
\begin{definition}
  \label{def:HJ}
  A map
  $U \in \W{1}{\infty} ([0,T] \times \reali; \reali) \cap \C0 ([0,T];
  \Wloc{1}{\infty} (\reali; \reali))$ is a \emph{viscosity solution}
  to~\eqref{eq:35} if for all
  $\phi \in \C\infty (\left]0,T\right[ \times \reali; \reali)$ and all
  $(t_o, x_o) \in \left]0, T\right[ \times \reali$
  \begin{itemize}
  \item if $U-\phi$ has a local maximum at $(t_o, x_o)$, then
    $\partial_t \phi(t_o, x_o) + f\left(\partial_x \phi (t_o,
      x_o)\right) \leq 0$;
  \item if $U-\phi$ has a local minimum at $(t_o, x_o)$, then
    $\partial_t \phi(t_o, x_o) + f\left(\partial_x \phi (t_o,
      x_o)\right) \geq 0$.
  \end{itemize}
\end{definition}
\noindent For the existence of a semigroup generated by~\eqref{eq:35}
yielding solutions in the sense of Definition~\ref{def:HJ}, we refer
for instance to the classical
result~\cite[Theorem~VI.2]{CrandallLions1983}.

The space derivation, i.e., the map $U \to u = \partial_x U$, shows
the equivalence between solutions to~\eqref{eq:35} in the sense of
Definition~\ref{def:HJ} and solutions to~\eqref{eq:1} in the sense of
Definition~\ref{def:HCL}, see~\cite[Theorem~1.1]{KarlsenRisebro2002}
and the references therein.

\smallskip

Throughout this paper, the following function plays a key role.

\begin{notation}
  \label{not:p}
  For a fixed $w \in \L\infty (\reali;\reali)$, we denote
  \begin{equation}
    \label{eq:15}
    \begin{array}{ccccl}
      p
      & \colon
      & \reali
      & \to
      & \reali
      \\
      &
      & x
      & \mapsto
      & x - T \, f' \! \left(w (x)\right) \,.
    \end{array}
  \end{equation}
\end{notation}

\noindent In the case of~\eqref{eq:1}, as soon as
$\ii_T (w) \neq \emptyset$, $p$ assigns to each $x \in \reali$ the
intersection of the minimal backward characteristic for~\eqref{eq:1}
through $(T,x)$, see~\cite[Chapter~X]{DafermosBook}, with the axis
$t=0$. In the case of~\eqref{eq:35}, we clearly set
$p (x) := x - T \, f'\left(\partial_x W (x)\right)$.

The choice of the \emph{left} continuous representative of $w$ is here
crucial to obtain \emph{minimal} backward characteristics.

Oleinik condition on the decay of positive waves~\cite{BressanDecay,
  Glass, Oleinik1957Originale}, see also~\cite[Chapter~6,
Ex.5]{BressanLectureNotes} and~\cite[Theorem~11.2.1]{DafermosBook}, is
equivalent to require that $p$, defined in~\eqref{eq:15}, be weakly
increasing:
\begin{equation}
  \label{eq:O}
  \mbox{\textbf{(O):}} \quad
  \mbox{for all } x \in \reali \mbox{ and }y \in \reali^+ \setminus \{0\}
  \quad
  \begin{array}{l}
    p(x) \leq p (x+y)\,,\mbox{ equivalently}
    \\
    f'\left(w (x+y)\right) - f' \left(w(x)\right) \leq y/T \,.
  \end{array}
\end{equation}

\noindent On the basis of Oleinik Condition~\eqref{eq:O}, we partition
$\reali$ into two sets $X_i$ and $X_{ii}$ that play a key role in the
sequel.

\begin{proposition}
  \label{prop:D}
  Let~\eqref{eq:H} hold and $T$ be positive. Fix
  $w \in \L\infty (\reali; \reali)$ such that~\eqref{eq:O} holds and
  $p \in \SBVloc (\reali; \reali)$. Introduce the sets
  \begin{equation}
    \label{eq:22}
    \begin{array}{rcl}
      X_i
      & :=
      & p \left(\!
        \left\{x\in \reali \colon
        p \mbox{ is differentiable at }x \mbox{ and }
        p' (x) \neq 0
        \right\}
        \right) \,,
      \\
      X_{ii}
      & :=
      & \!\bigcup_{x \in \reali} \left]p (x-), p (x+)\right[ \,.
    \end{array}
  \end{equation}
  Then, $\reali \setminus \left(X_i \cup X_{ii}\right)$ has Lebesgue
  measure $0$.
\end{proposition}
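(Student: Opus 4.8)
The plan is to argue directly on the monotone function $p$ of~\eqref{eq:15}, using two of its features. Since $w\in\L\infty$ and $f\in\C2$, the map $f'\circ w$ is bounded, so $\modulo{p(x)-x}=T\,\modulo{f'(w(x))}\le M$ for a finite constant $M$; thus $p$ is a bounded perturbation of the identity, and in particular $\inf p=-\infty$ and $\sup p=+\infty$. The hypothesis $p\in\SBVloc$ means exactly that $Dp$ has no Cantor part, hence $Dp=p'\,\mathcal{L}^{1}+D^{j}p$ with $D^{j}p$ atomic; this will be used only once, in the identity below. By~\eqref{eq:O}, $p$ is weakly increasing, hence differentiable off a Lebesgue-negligible set $N$, with $p'\ge0$ where it exists. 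Set $B:=\{x\notin N\colon p'(x)=0\}$ and $A:=\{x\notin N\colon p'(x)>0\}$, so that $A$, $B$, $N$ partition $\reali$ and, by~\eqref{eq:22}, $X_i=p(A)$.

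First I would reduce the statement to showing that $p(B)$ and $p(N)$ are Lebesgue-negligible. Since $p$ is increasing and, by convention, left continuous, no value $p(z)$ lies in an interval $]p(x-),p(x+)[$; hence $p(\reali)\cap X_{ii}=\emptyset$ and $X_{ii}$ is exactly the union of the open jump gaps of $p$. As $\inf p=-\infty$ and $\sup p=+\infty$, every point of $\reali\setminus\big(p(\reali)\cup X_{ii}\big)$ is a right jump value $p(x+)$, so this set is at most countable. Moreover $p(\reali)\setminus X_i=p(\reali)\setminus p(A)\subseteq p(\reali\setminus A)=p(B)\cup p(N)$. Putting these together,
\[
  \reali\setminus\big(X_i\cup X_{ii}\big)\ \subseteq\ p(B)\ \cup\ p(N)\ \cup\ \big(\text{a countable set}\big)\,,
\]
so it suffices to prove $\modulo{p(B)}=\modulo{p(N)}=0$.

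Both follow from the estimate $\modulo{p(E)}\le\int_E p'(x)\,\d x$ for every bounded Borel set $E\subseteq\reali$. I would establish it starting from open intervals, where in fact equality holds:
\[
  \modulo{p(]a,b[)}=\int_a^b p'(x)\,\d x\qquad\text{for }a<b\,.
\]
Indeed $p(]a,b[)$ agrees, up to finitely many points, with the interval of length $p(b-)-p(a+)=Dp(]a,b[)$ from which the open jump gaps of $p$ interior to $]a,b[$ have been removed, and the total length of those gaps is the atomic part $D^{j}p(]a,b[)$; hence $\modulo{p(]a,b[)}=Dp(]a,b[)-D^{j}p(]a,b[)=\int_a^b p'\,\d x$, precisely because there is no Cantor part (with a Cantor-type part the identity would fail, the Cantor function being the standard obstruction). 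Summing over connected components yields $\modulo{p(U)}\le\int_U p'\,\d x$ for every open $U$, and then outer regularity of Lebesgue measure together with the absolute continuity of the indefinite integral of $p'\in\Lloc1$ extends the inequality to all bounded Borel $E$.

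It only remains to apply this twice. With $E=B\cap[-R,R]$ we get $\modulo{p(B\cap[-R,R])}\le\int_{B\cap[-R,R]}p'\,\d x=0$, since $p'$ vanishes on $B$; with $E=N\cap[-R,R]$ we get $\modulo{p(N\cap[-R,R])}\le\int_{N\cap[-R,R]}p'\,\d x=0$, since $\modulo{N}=0$ and $p'\in\Lloc1$. Letting $R\to\infty$ gives $\modulo{p(B)}=\modulo{p(N)}=0$, and the reduction above concludes. The one genuinely delicate point is the interval identity — the jumps of $p$ at, and strictly inside, the endpoints must be tracked carefully — and it is there that the hypotheses $p\in\SBVloc$ (no Cantor part) and $w\in\L\infty$ (boundedness of $p(x)-x$, used in the reduction) are really needed; everything else is bookkeeping and standard measure theory.
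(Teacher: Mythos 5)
Your proof is correct, and its skeleton is the same as the paper's: split $\reali\setminus(X_i\cup X_{ii})$ into the image under $p$ of the bad set (non-differentiability points plus zero-derivative points) and a countable leftover, then show the image is Lebesgue-null using the absence of a Cantor part. The differences are in how the two ingredients are realized. For the key measure-theoretic step the paper proves an area formula with counting multiplicity for weakly increasing $\SBV$ maps (its Lemma~\ref{lem:1}) and combines it with the Riesz--Nagy fact that $\int_{P_1\cup P_2}p_{ac}'=0$; you instead prove directly the weaker but sufficient Lusin-type bound $\modulo{p(E)}\le\int_E p'\,\d x$ via the exact interval identity $\modulo{p(\left]a,b\right[)}=Dp(\left]a,b\right[)-D^{j}p(\left]a,b\right[)$, which is essentially the same jump-bookkeeping computation as in the paper's proof of Lemma~\ref{lem:1}, just without the counting measure. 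For the leftover set, the paper appeals to ``properties of backward characteristics'' to get that $\reali\setminus\left(X_{ii}\cup p(\reali)\right)$ is countable, whereas you derive it elementarily from monotonicity, left continuity and the bound $\modulo{p(x)-x}\le T\,\norma{f'\circ w}_{\L\infty}$, identifying the leftover points as right-limit values $p(x+)$; this is more self-contained. One small imprecision: in the interval identity, $p(\left]a,b\right[)$ agrees with the interval minus the jump gaps up to an \emph{at most countable} set (the right-limit values $p(x+)$ at the countably many jump points), not ``finitely many points''; this does not affect the measure computation, but you should state it correctly.
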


We now investigate the equivalence between the Conservation
Law~\eqref{eq:1} and the Hamilton--Jacobi equation~\eqref{eq:35}. A
key result is~\cite[Theorem~1.1]{KarlsenRisebro2002}, to which we
provide here a completion, in the sense explained through the
following diagrams:
\begin{displaymath}
  \begin{array}{c@{\qquad\qquad\qquad}c}
    \mbox{\cite[Theorem~1.1]{KarlsenRisebro2002}\ }
    &\mbox{Proposition~\ref{prop:sol}\qquad\qquad\qquad}
    \\
    \begin{array}{c@{}ccc@{}c}
      & U_o
      & \longrightarrow
      & \SS_t U_o
      \\
      \partial_x
      & \big\downarrow
      &
      & \big\downarrow
      & \partial_x
      \\
      & u_o
      & \longrightarrow
      & \ss_t u_o
    \end{array}
      & \begin{array}{c@{}ccc@{}c}
          & U_o
          & \longrightarrow
          & \SS_t U_o
          \\
          \int^x
          & \big\uparrow
          &
          & \big\uparrow
          & \mbox{Formula~\eqref{eq:31}}
          \\
          & u_o
          & \longrightarrow
          & \ss_t u_o
        \end{array}
  \end{array}
\end{displaymath}

\begin{proposition}
  \label{prop:sol}
  Let $f$ satisfy~\eqref{eq:H}. Fix
  $u_o \in \L\infty (\reali; \reali)$ and call $u$ the solution in the
  sense of Definition~\ref{def:HCL} to the Cauchy problem for the
  Conservation Law~\eqref{eq:1} with datum $u_o$ at time $t=0$. For a
  path $\gamma \in \W1\infty ([0,T]; \reali)$ and a constant
  $c \in \reali$, define
  \begin{equation}
    \label{eq:31}
    U (t,x)
    =
    \int_{\gamma (t)}^x u (t,\xi) \d\xi
    +
    \int_0^t
    \left(
      \dot\gamma (\tau) \, u\left(\tau, \gamma (\tau)\right)
      -
      f\left(u\left(\tau, \gamma (\tau)\right)\right)
    \right)\d\tau
    +
    c\,.
  \end{equation}
  Then, $U$ solves Hamilton--Jacobi equation~\eqref{eq:35} with datum
  $U_o (x) = \int_{\gamma (0)}^x u_o (\xi)\d\xi + c$ in the sense of
  Definition~\ref{def:HJ}.
\end{proposition}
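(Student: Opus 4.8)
The plan is to reduce the statement to the equivalence recalled after Definition~\ref{def:HJ} (namely \cite[Theorem~1.1]{KarlsenRisebro2002}). First I set $U_o (x) := \int_{\gamma (0)}^x u_o (\xi)\d\xi + c$, so that $U_o \in \W{1}{\infty} (\reali;\reali)$ with $\partial_x U_o = u_o$ a.e., since $u_o \in \L\infty (\reali;\reali)$; I let $\hat U$ denote the viscosity solution of~\eqref{eq:35} with datum $U_o$, which exists and belongs to $\W{1}{\infty} ([0,T]\times\reali;\reali) \cap \C0 ([0,T];\Wloc{1}{\infty}(\reali;\reali))$ by~\cite[Theorem~VI.2]{CrandallLions1983}. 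By~\cite[Theorem~1.1]{KarlsenRisebro2002} and the uniqueness of weak entropy solutions, $\partial_x \hat U (t,\cdot) = u (t,\cdot)$ for all $t \in [0,T]$. It then suffices to show $U = \hat U$; this also yields $U \in \W{1}{\infty}([0,T]\times\reali;\reali)$, as required in Definition~\ref{def:HJ}.

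For each fixed $t$, the fundamental theorem of calculus applied to~\eqref{eq:31} gives $\partial_x U (t,\cdot) = u (t,\cdot) = \partial_x \hat U (t,\cdot)$, so, both $U(t,\cdot)$ and $\hat U(t,\cdot)$ being continuous in $x$, the function $\beta (t) := \hat U (t,x) - U (t,x)$ is independent of $x$. It is continuous in $t$, and $\beta (0) = 0$ because $U (0,\cdot) = U_o = \hat U (0,\cdot)$ by~\eqref{eq:31}; hence everything reduces to proving that $\beta$ is constant. I now fix $x_o \in \reali$ outside a Lebesgue-null set so that: \emph{(i)} the vertical line $\xi = x_o$ meets the discontinuity set of $u$ in a set of times of measure zero --- this holds for a.e.\ $x_o$ because for $\tau > 0$ the function $u(\tau,\cdot)$ lies in $\BVloc (\reali;\reali)$, so it is discontinuous at countably many points only, whence the discontinuity set of $u$ is Lebesgue-negligible in the plane and Fubini applies; and \emph{(ii)} $\hat U$ is jointly differentiable at $(\tau, x_o)$ for a.e.\ $\tau$, which holds for a.e.\ $x_o$ by Rademacher's theorem for the Lipschitz map $\hat U$ together with Fubini.

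With such an $x_o$, $\beta(t) = \hat U(t,x_o) - U(t,x_o)$, and I claim that each of the Lipschitz maps $t \mapsto U(t,x_o)$ and $t \mapsto \hat U(t,x_o)$ increases over an interval $[s,t] \subseteq [0,T]$ exactly by $-\int_s^t f\left(u(\tau,x_o)\right)\d\tau$. For $U$: I apply the weak formulation of~\eqref{eq:1} on the Lipschitz region enclosed by the curve $\tau \mapsto \gamma(\tau)$, the vertical segment $\xi = x_o$ and the times $s, t$; the Rankine--Hugoniot condition makes $\dot\gamma(\tau)\,u(\tau,\gamma(\tau)) - f\left(u(\tau,\gamma(\tau))\right)$ independent of the one-sided trace chosen where $\gamma$ runs along a shock, and the boundary contributions along $\gamma$ cancel exactly against the second integral in~\eqref{eq:31}, leaving $U(t,x_o) - U(s,x_o) = -\int_s^t f\left(u(\tau,x_o)\right)\d\tau$. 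For $\hat U$: at a.e.\ $\tau$, the point $(\tau,x_o)$ is a differentiability point of $\hat U$ --- where, $\hat U$ being a viscosity solution, the equation holds classically --- and $x_o$ is a continuity point of $u(\tau,\cdot)$, at which $\partial_x\hat U(\tau,\cdot)$ equals $u(\tau,x_o)$; hence $\partial_t\hat U(\tau,x_o) = -f\left(\partial_x\hat U(\tau,x_o)\right) = -f\left(u(\tau,x_o)\right)$ for a.e.\ $\tau$, and $\hat U(t,x_o) - \hat U(s,x_o) = -\int_s^t f\left(u(\tau,x_o)\right)\d\tau$. Subtracting shows $\beta$ constant, so $\beta \equiv \beta(0) = 0$ and $U = \hat U$. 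The step I expect to be the main obstacle is the identity for $U(t,x_o) - U(s,x_o)$: it rests on the Gauss--Green (normal-trace) formula for the bounded divergence-measure field $\left(u, f(u)\right)$ across a Lipschitz boundary, and the choice \emph{(i)} of $x_o$ is what makes the relevant boundary traces well defined, while Rankine--Hugoniot handles the stretch of boundary lying along a shock.
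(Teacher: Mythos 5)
Your argument is correct and takes essentially the same route as the paper: introduce the viscosity solution $\hat U$ with datum $U_o$ (whose $x$-derivative is $u$ by Karlsen--Risebro), observe that its difference with the formula~\eqref{eq:31} depends on $t$ only, and identify that difference by computing the time increments at a fixed abscissa, using the balance of $u$ between $\gamma$ and a vertical line (which the paper invokes as Proposition~\ref{prop:4} and you re-derive via Gauss--Green and Rankine--Hugoniot) together with the a.e.\ pointwise validity of the Hamilton--Jacobi equation for $\hat U$. Your choice of a generic $x_o$ avoiding the jump set is a careful way of handling the trace issues that the paper treats implicitly through the differentiability properties of the value function.
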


\section{Construction of a Remarkable Element of
  $\ii_t (w)$}
\label{subs:NonEmpty}

We now prove that Oleinik Condition~\eqref{eq:O} characterizes those
profiles $w$ such that $\ii_T (w) \neq \emptyset$. Indeed, if a
profile $w$ satisfies Oleinik condition~\eqref{eq:O}, then the
conservation law~\eqref{eq:1} can be integrated backwards in time,
taking $w$ as final datum at time $T$ and yielding a $\BV$ initial
profile at time $0$. Technically, we reverse the space variable,
rather than reversing time, and we explicitly construct an element of
$\ii_T (w)$ that will play a key role in the sequel.

\begin{theorem}
  \label{thm:extremal}
  Let~\eqref{eq:H} hold and $T$ be positive. Fix
  $w \in \L\infty (\reali; \reali)$ such that~\eqref{eq:O}
  holds. Then, there exists a unique function
  $u_o^* \in \L\infty(\reali; \reali)$ characterized by each one of
  the following two equivalent conditions:
  \begin{enumerate}[label={\sl(\roman*\/$^*$)}]
  \item \label{thm:ex:1} $u_o^* (x) = \tilde u (T,-x)$, where
    $\tilde u$ is a solution to $ \left\{
      \begin{array}{l}
        \partial_t \tilde u + \partial_x f (\tilde u) = 0 \,,
        \\
        \tilde u (0,x) = w (-x) \,.
      \end{array}
    \right.$

  \item \label{thm:ex:3} $u_o^*$ is such that
    \begin{enumerate}[label={\sl(ii$^*$.\roman*\/)}]
    \item \label{thm:ex:i} for all $x \in \reali$ where $p$ is
      differentiable and $p' (x) \neq 0$,
      \begin{displaymath}
        \lim_{y\to x} \frac{1}{p (y) - p (x)} \int_{p (x)}^{p (y)} u_o^* (\xi) \d\xi
        =
        w (x) \,;
      \end{displaymath}
    \item \label{thm:ex:ii} for all $x \in \reali$ where
      $w (x-) \neq w (x+)$, for all $v \in [w(x+),w(x-)]$,
      \begin{displaymath}
        \begin{array}{@{}r@{\,}c@{\,}l@{}}
          \displaystyle
          \frac{1}{T} \int_{x-T f'(v)}^{x-T f'(w(x+))} \!\! u_o^*(\xi) \d{\xi}
          & =
          & \left(v \, f'(v)-f(v)\right)
            -
            \left(
            w(x+) \, f'\left(w(x+)\right)-f\left(w(x+)\right)
            \right) ,
          \\[10pt]
          \displaystyle
          \frac{1}{T} \int_{x-T f'(w(x-))}^{x-T f'(v)} \!\! u_o^*(\xi) \d{\xi}
          & =
          & \left(w(x-) \, f'\left(w(x-)\right)-f\left(w(x-)\right)\right)
            -
            \left(v \, f'(v)-f(v)\right) .
        \end{array}
      \end{displaymath}
    \end{enumerate}
  \end{enumerate}
  \noindent Moreover, $u_o^*$ enjoys the following properties:
  \begin{enumerate}[label=\textsl{(\arabic*$^*$)}]
  \item \label{thm:exi:1} $u_o^* \in \ii_T (w)$;
  \item \label{thm:exi:2} $u_o^*$ is one sided Lipschitz and
    $u_o^* \in \BV (\reali; \reali)$;
  \item \label{thm:exi:3} for all $x \in \reali$ and for all
    $y \in \reali^+$,
    $f'\left(u_o^* (x-y)\right) - f'\left(u_o^* (x)\right) \leq
    \dfrac{y}{T}$.
  \end{enumerate}
\end{theorem}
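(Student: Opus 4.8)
The plan is to prove the theorem by exploiting the symmetry $x\mapsto -x$ that turns Oleinik's decay condition into a \emph{reverse} one-sided condition, so that the conservation law can be solved "backwards" by solving it forwards for a reflected datum. First I would make sense of condition~\ref{thm:ex:1}: set $\bar w(x) := w(-x)$, observe that $\bar w \in \L\infty(\reali;\reali)$, and let $\tilde u(t,x) := \ss_t \bar w(x)$ be the unique weak entropy solution to~\eqref{eq:1} with datum $\bar w$. By~\cite[Theorem~16.1]{Smoller}, $\tilde u(T,\cdot) \in \BVloc(\reali;\reali)$, and in fact the Oleinik-type estimate for~\eqref{eq:1} (applied to $\tilde u$) gives $f'(\tilde u(T,x+y)) - f'(\tilde u(T,x)) \le y/T$ for $y>0$. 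Defining $u_o^*(x) := \tilde u(T,-x)$, this becomes property~\ref{thm:exi:3}, and since $f''\ge c>0$ so that $(f')^{-1}$ is Lipschitz, $u_o^*$ is one-sided Lipschitz, hence in $\BVloc$; boundedness of $\tilde u$ gives $u_o^* \in \L\infty$ and then, one-sided Lipschitz plus bounded yields $u_o^* \in \BV(\reali;\reali)$, establishing~\ref{thm:exi:2}.

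Next I would prove that this $u_o^*$ belongs to $\ii_T(w)$, i.e.\ property~\ref{thm:exi:1}. The natural route is the space-reflection invariance of~\eqref{eq:1}: if $u(t,x)$ solves $\partial_t u + \partial_x f(u)=0$, then $v(t,x) := u(t,-x)$ solves $\partial_t v - \partial_x f(v) = 0$, i.e.\ the conservation law with flux $-f$. Since $-f$ is uniformly \emph{concave}, entropy solutions of the $(-f)$-law are exactly the space-reflections of entropy solutions of the $f$-law; I would check this compatibility of the entropy inequalities directly from Definition~\ref{def:HCL} (the sign $\sgn(u-k)$ is preserved under $x\mapsto -x$ while $\partial_x$ picks up a minus sign, matching the flux sign change). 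Running $\tilde u$ forward from time $0$ to time $T$ with datum $\bar w$, and then reflecting in $x$, we recover a solution of the original $f$-law on the time interval $[0,T]$ whose value at time $T$ is $w$ — but read "backwards in $x$", so that what was the datum at $t=0$ for the reflected problem (namely $\bar w$, giving $w$ after reflection) plays the role of the \emph{terminal} profile, and $u_o^*(x) = \tilde u(T,-x)$ is the corresponding initial datum. Concretely: let $U^*(t,x)$ be the entropy solution of~\eqref{eq:1} with datum $u_o^*$ at $t=0$; I must show $U^*(T,\cdot) = w$. Using uniqueness and the reflection correspondence, $U^*(t,x)$ coincides with $\tilde u(T-t,-x)$ on $[0,T]\times\reali$ — both solve the same Cauchy problem, because at $t=0$ we have $\tilde u(T,-x)=u_o^*(x)$ and both satisfy the entropy condition for $f$ (the reflected solution of the concave law is the entropy solution of the convex law). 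Evaluating at $t=T$ gives $U^*(T,x) = \tilde u(0,-x) = \bar w(-x) = w(x)$, which is~\ref{thm:exi:1}.

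It then remains to establish the two equivalent \emph{characterizations}~\ref{thm:ex:1} and~\ref{thm:ex:3} and, in particular, the uniqueness of $u_o^*$. For this I would invoke the Lax--Oleinik representation for the uniformly convex flux: for any $u_o \in \ii_T(w)$ and $\ss_T u_o = w$, the value $w(x) = (f')^{-1}\!\big((x - y(x))/T\big)$ where $y(x)$ is a minimizer of $\xi \mapsto \int_0^\xi u_o + T\, f^*\!\big((x-\xi)/T\big)$, $f^*$ the Legendre conjugate of $f$; minimal backward characteristics through $(T,x)$ hit $t=0$ exactly at $p(x) = x - T f'(w(x))$, which motivates Notation~\ref{not:p}. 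The integral identity in~\ref{thm:ex:i} then says that the \emph{average} of $u_o^*$ over the characteristic spread $[p(x),p(y)]$ reproduces $w(x)$ in the limit — this is precisely the statement that on $X_i$ (where $p'\neq 0$, i.e.\ genuinely rarefactive fans) the Lax--Oleinik minimizer moves non-trivially and averaging recovers the data. The identities in~\ref{thm:ex:ii} encode the Rankine--Hugoniot / minimizer-jump condition at a shock of $w$: as $v$ ranges over $[w(x+),w(x-)]$, the value $x - T f'(v)$ sweeps the base interval of the shock's backward-characteristic fan, and the primitive of $u_o^*$ there must match the Legendre transform $v f'(v) - f(v) = f^*(f'(v))$ — differentiating in $v$ recovers $u_o^* \circ p^{-1} = w$ a.e.\ on that piece. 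The crux is that conditions~\ref{thm:ex:i}--\ref{thm:ex:ii} pin down $\int_{p(x_1)}^{p(x_2)} u_o^*$ for a.e.\ pair $(x_1,x_2)$, hence $u_o^*$ a.e.\ on $X_i \cup X_{ii}$; by Proposition~\ref{prop:D} this is a.e.\ on $\reali$, giving uniqueness. \textbf{The main obstacle} I anticipate is exactly this last bookkeeping: showing that the two families of integral identities, stitched together across the (at most countably many) shock fans of $w$ and the rarefactive part, determine $u_o^*$ up to a Lebesgue-null set and are genuinely \emph{equivalent} to~\ref{thm:ex:1}. This requires a careful application of the structure theorem for $\BVloc$ (or $\SBVloc$) monotone functions to $p$ — decomposing $p$ into its absolutely continuous part (feeding~\ref{thm:ex:i}) and jump part (feeding~\ref{thm:ex:ii}), and checking that the Cantor part is absent or negligible, which is where the hypothesis $p \in \SBVloc$ and Proposition~\ref{prop:D} do the real work. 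Once the a.e.\ identification on $X_i\cup X_{ii}$ is in hand, matching it against the explicit Lax--Oleinik formula for $\tilde u(T,-\cdot)$ closes the loop between~\ref{thm:ex:1} and~\ref{thm:ex:3}.
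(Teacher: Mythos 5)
The decisive gap is in your argument for~\ref{thm:exi:1}. You define $u_o^*(x)=\tilde u(T,-x)$ and then assert that the entropy solution $U^*$ of~\eqref{eq:1} with datum $u_o^*$ coincides with $(t,x)\mapsto\tilde u(T-t,-x)$ because ``both satisfy the entropy condition for $f$''. But that map is a combined \emph{time and space} reversal, while your reflection lemma only handles the space reversal (it converts the $f$--law into the $(-f)$--law and says nothing about reversing $t$). Under $(\tau,\xi)=(T-t,-x)$ a weak solution remains a weak solution, but every Kru\v{z}kov entropy inequality changes sign: the reversed function satisfies $\partial_t\modulo{u-k}+\partial_x\left(\sgn(u-k)\left(f(u)-f(k)\right)\right)\geq 0$ in the sense of distributions. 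Hence $\tilde u(T-t,-x)$ is an admissible solution of~\eqref{eq:1} only if no entropy is produced, i.e.\ only if $\tilde u$ has no shocks in $\left]0,T\right[\times\reali$; if $\tilde u$ developed a shock before time $T$, the reversed profile would contain an inadmissible discontinuity and $\ss_T u_o^*\neq w$. This is exactly where the irreversibility of~\eqref{eq:1} must be confronted, and your space--reflection remark does not address it. What is missing is the interior regularity of $\tilde u$, and this is where hypothesis~\eqref{eq:O} does its real work: besides the standard decay estimate $f'\left(\tilde u(\tau,\xi+h)\right)-f'\left(\tilde u(\tau,\xi)\right)\leq h/\tau$, condition~\eqref{eq:O} on $w$, transported along minimal backward characteristics of $\tilde u$, gives the opposite bound $\geq -h/(T-\tau)$, so $\xi\mapsto f'\left(\tilde u(\tau,\xi)\right)$ is Lipschitz for every $\tau\in\left]0,T\right[$ and $\tilde u$ is locally Lipschitz on the open strip (this is the paper's Lemma on $u_o^*$). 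Then all entropy--entropy flux relations hold as equalities a.e., the time--space reversed function is again an entropy solution, and~\ref{thm:exi:1} follows. You announce at the outset that the reflection ``turns Oleinik's condition into a reverse one-sided condition'', but you only ever use an Oleinik estimate at time $T$ (for~\ref{thm:exi:2} and~\ref{thm:exi:3}) and never rule out shock formation in $\tilde u$ on $\left]0,T\right[$; without that step the proof of~\ref{thm:exi:1} fails.

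Concerning the equivalence of~\ref{thm:ex:1} and~\ref{thm:ex:3} and the uniqueness, your outline is essentially the paper's strategy (use Proposition~\ref{prop:D} to reduce uniqueness to identifying $u_o^*$ a.e.\ on $X_i\cup X_{ii}$, obtain~\ref{thm:ex:i} from a balance along extreme backward characteristics and~\ref{thm:ex:ii} from the structure of $u_o^*$ inside a shock fan), but as written it is a plan rather than a proof: the identity behind~\ref{thm:ex:i} requires integrating the conservation law between minimal/maximal backward characteristics (Proposition~\ref{prop:4} together with constancy of $u$ along them), and the explicit formula $u_o^*(y)=g\left(\frac{x-y}{T}\right)$ on $\left]p(x-),p(x+)\right[$, which yields~\ref{thm:ex:ii} upon differentiating in $v$, again rests on the fact that characteristics of $\tilde u$ do not cross in $\left]0,T\right[$ --- that is, on the same interior Lipschitz regularity you have not established. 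So the one missing lemma (local Lipschitz continuity of $\tilde u$ on $\left]0,T\right[\times\reali$, derived from~\eqref{eq:O}) is needed twice, and supplying it would make both halves of your argument sound.
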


We underline that condition~\ref{thm:ex:3} naturally determines two
subsets of $\reali$, related to $X_i$ and $X_{ii}$. Indeed,
in~\ref{thm:ex:3} we explicitly specify the exact sets of those points
$x$ where the two conditions~\ref{thm:ex:i} and~\ref{thm:ex:ii} have
to be satisfied by $w$ at time $t = T$. Essentially, the results above
show that if $u_o \in \ii_T (w)$, then the restriction
${u_o}_{\vert X_i}$ yields the continuous part of $w$, while
${u_o}_{\vert X_{ii}}$ yields the shocks.

As a first consequence of Theorem~\ref{thm:extremal} we obtain the
following characterization of those profiles $w$ such that $\ii_T (w)$
is non empty.

\begin{corollary}
  \label{thm:1}
  Let~\eqref{eq:H} hold and $T$ be positive. Fix
  $w \in \L\infty (\reali; \reali)$. With the notations~\eqref{eq:2}
  and~\eqref{eq:15}, the following statements are equivalent:
  \begin{enumerate}[label={\sl(\alph*)}]
  \item \label{thm:1:1} $\ii_T (w) \neq \emptyset$;
  \item \label{thm:1:2} a suitable representative of $w$ satisfies
    Oleinik Condition~\eqref{eq:O}.
  \end{enumerate}
\end{corollary}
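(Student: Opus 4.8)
The plan is to prove Corollary~\ref{thm:1} by establishing the two implications separately, drawing on Theorem~\ref{thm:extremal} for the easy direction and on the structure of minimal backward characteristics for the hard one. For the implication \ref{thm:1:2}~$\Rightarrow$~\ref{thm:1:1}: if some representative $\bar w$ of $w$ satisfies Oleinik Condition~\eqref{eq:O}, then Theorem~\ref{thm:extremal} applied to $\bar w$ produces $u_o^* \in \L\infty(\reali;\reali)$ with $u_o^* \in \ii_T(\bar w)$ by property~\ref{thm:exi:1}, i.e. $\ss_T u_o^* = \bar w$ a.e.; since $w$ and $\bar w$ agree almost everywhere and $\ii_T(w)$ depends only on the $\L\infty$-class of $w$, we conclude $u_o^* \in \ii_T(w)$, hence $\ii_T(w) \neq \emptyset$. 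This direction is essentially a dictionary entry once Theorem~\ref{thm:extremal} is in hand.

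For the implication \ref{thm:1:1}~$\Rightarrow$~\ref{thm:1:2}: suppose $u_o \in \ii_T(w)$, so $u:=\ss_\cdot u_o$ is the entropy solution with $u(T,\cdot) = w$ (up to the left-continuous representative convention fixed in Section~\ref{subs:Notation}). By the regularizing effect recorded after Definition~\ref{def:HCL} (condition~\eqref{eq:H} forces $u(T,\cdot) \in \BVloc$), the profile $x \mapsto u(T,x)$ has a well-defined left-continuous $\BVloc$ representative $\bar w$, and the set $\{u_o \in \L\infty : \ss_T u_o = w\}$ is unchanged if we replace $w$ by $\bar w$. The claim is that $\bar w$ satisfies~\eqref{eq:O}. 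First I would invoke the theory of generalized characteristics for uniformly convex scalar conservation laws, as in~\cite[Chapter~X]{DafermosBook} or~\cite[Chapter~6]{BressanLectureNotes}: through each point $(T,x)$ there is a minimal backward characteristic, which is a straight line of speed $f'(\bar w(x+))$ — or, with the left-continuous convention, one relates the slope to $f'(\bar w(x))$ — reaching the axis $t=0$ at the point $p(x) = x - T f'(\bar w(x))$ of Notation~\ref{not:p}. The fundamental monotonicity property of minimal backward characteristics is precisely that distinct ones cannot cross and are ordered consistently with the spatial ordering of their endpoints at time $T$; equivalently, $x \mapsto p(x)$ is weakly increasing. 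This is exactly Oleinik Condition~\eqref{eq:O}, and its equivalent form in~\eqref{eq:O} follows by dividing $p(x+y)-p(x) \geq 0$ by $T$ and rearranging.

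I would organize the hard direction as follows. (1) Fix the $\BVloc$ left-continuous representative $\bar w$ of $u(T,\cdot)$ and recall $\ii_T(w) = \ii_T(\bar w) \ni u_o$. (2) Quote the existence of minimal backward characteristics for~\eqref{eq:1} under~\eqref{eq:H} and their description as lines of slope $f'(\bar w(x))$ emanating from $(T,x)$ — here care is needed to match the left-continuous convention with the standard statements, which are usually phrased for $\bar w(x+)$ or for points of approximate continuity; at jump points of $\bar w$ one uses that $u(T,x-) = \bar w(x)$ is the relevant one-sided trace for the \emph{minimal} characteristic. (3) State the no-crossing / monotone ordering of minimal backward characteristics and read off $p(x) \leq p(x+y)$ for $y>0$. (4) Translate into the $f'$ form of~\eqref{eq:O}. (5) Close the loop by noting the two implications give the stated equivalence. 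The main obstacle I anticipate is Step~(2)–(3): the clean statement ``$p$ is weakly increasing'' must be reconciled with the precise representative conventions (left-continuity, one-sided traces at shocks) so that the map $p$ in Notation~\ref{not:p} is literally the one whose monotonicity encodes the Oleinik inequality. The subtlety is that at a shock the minimal backward characteristic from $(T,x)$ uses the left trace $\bar w(x)$ while nearby characteristics from $(T,x')$ with $x'$ slightly larger use $\bar w(x'+)$; one must verify the ordering survives this, which is where the genuine content of the no-crossing property for \emph{minimal} (as opposed to arbitrary) backward characteristics is used, together with the fact that the divide/shock curves emanate forward in time so backward characteristics stay on one side. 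Everything else is bookkeeping: measure-theoretic identification of representatives and elementary algebra with $f'$.
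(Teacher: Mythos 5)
Your proposal is correct and follows essentially the paper's route: the implication \ref{thm:1:2}~$\Rightarrow$~\ref{thm:1:1} is exactly the paper's appeal to~\ref{thm:exi:1} in Theorem~\ref{thm:extremal} (equivalently Lemma~\ref{lem:u0}), while for \ref{thm:1:1}~$\Rightarrow$~\ref{thm:1:2} the paper simply cites the Oleinik-type decay estimate \cite[Theorem~11.2.1]{DafermosBook}, whose proof via non-crossing minimal backward characteristics is precisely what you sketch. Only a small slip: the minimal backward characteristic through $(T,x)$ has slope $f'\left(\bar w(x-)\right)$ (the slope $f'\left(\bar w(x+)\right)$ belongs to the maximal one), which your later step~(2) in fact corrects, consistently with the left-continuous convention $\bar w(x)=\bar w(x-)$.
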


Note that condition~\ref{thm:1:2}, and hence the requirement
$\ii_T(w) \neq \emptyset$, also ensures that
$w \in \BVloc (\reali; \reali)$. Hence, $w$ admits a left continuous
representative satisfying~\ref{thm:1:2} for all $x \in \reali$.

\smallskip

The translation of the results above to the case of Hamilton--Jacobi
equation~\eqref{eq:35} essentially relies on
Proposition~\ref{prop:sol} and is here omitted. We only recall that
the conditions~\ref{thm:ex:i} and~\ref{thm:ex:ii} above are
restated in terms of a primitive $U_o^*$ of $u_o^*$ in~\ref{lem:ex:1}
and~\ref{lem:ex:2} in Lemma~\ref{lem:ExUn}.

\section{Characterizations of $\ii_T (w)$ and $\II_T (W)$}
\label{subs:Charact}

In view of~\cite[Theorem~1.1]{BianchiniLu2012}, for any initial datum
$u_o$ and for all but countably many times $T$, the map $w = S_T u_o$
leads to a function $p$ in $\SBVloc (\reali; \reali)$, see
also~\cite[Theorem~1.2]{AmbrosioDeLellis2004}
and~\cite[Theorem~11.3.5]{DafermosBook}. Therefore, we restrict our
analysis below to functions $w$ such that
$p \in \SBVloc (\reali; \reali)$.

We proceed with our main result, in the version referring to the
Conservation Law~\eqref{eq:1}

\begin{theorem}
  \label{thm:2}
  Let~\eqref{eq:H} hold and $T$ be positive. Fix
  $w \in \L\infty (\reali; \reali)$ such that
  $\ii_T (w) \neq \emptyset$ and $p \in \SBVloc (\reali; \reali)$.
  Then, a map $u_o\in \L\infty (\reali; \reali)$ is in $\ii_T (w)$ if
  and only if the following two conditions hold:
  \begin{enumerate}[label={\sl(\roman*)}]
  \item \label{thm:2:i} for all $x \in \reali$ such that $p$ is
    differentiable at $x$ and $p' (x) \neq 0$,
    \begin{equation}
      \label{eq:11}
      \lim_{y\to x} \frac{1}{p (y) - p (x)} \int_{p (x)}^{p (y)} u_o (\xi) \d\xi
      =
      w (x) \,;
    \end{equation}
  \item \label{thm:2:ii} for all $x \in \reali$ such that
    $w (x-) \neq w (x+)$, for all $v \in [w(x+),w(x-)]$,
    \begin{displaymath}
      \begin{array}{@{}rcl@{}}
        \displaystyle
        \frac{1}{T} \int_{x-T f'(v)}^{x-T f'(w(x+))}{u_o(\xi)\d{\xi}}
        & \leq
        & \left(v \, f'(v)-f(v)\right)
          -
          \left(
          w(x+) \, f'\left(w(x+)\right)-f\left(w(x+)\right)
          \right) \,,
        \\[10pt]
        \displaystyle
        \frac{1}{T} \int_{x-T f'(w(x-))}^{x-T f'(v)}{u_o(\xi)\d{\xi}}
        & \geq
        & \left(w(x-) \, f'\left(w(x-)\right)-f\left(w(x-)\right)\right)
          -
          \left(v \, f'(v)-f(v)\right) \,.
      \end{array}
    \end{displaymath}
  \end{enumerate}
\end{theorem}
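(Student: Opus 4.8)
The plan is to exploit the explicit element $u_o^*\in\ii_T(w)$ built in Theorem~\ref{thm:extremal} together with the machinery of backward characteristics for the conservation law~\eqref{eq:1}, reducing the ``if and only if'' to a statement about the values that any $u_o\in\ii_T(w)$ must take along the fan of characteristics emanating from time $T$. The starting point is the classical Lax--Oleinik representation: if $u=\ss_\cdot u_o$ and $u(T,\cdot)=w$, then for each $x$ the minimal backward characteristic through $(T,x)$ is the segment joining $(T,x)$ to $(0,p(x))$, with speed $f'(w(x))$, and along it the value of $u$ is constantly $w(x)$; moreover $p$ is weakly increasing, which is Oleinik's condition~\eqref{eq:O}. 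The key identity, which I would derive from the Rankine--Hugoniot / Lax formula (equivalently, from conservation of mass between two characteristics), is that for $a<b$,
\begin{displaymath}
  \int_{p(a)}^{p(b)} u_o(\xi)\,\d\xi \;=\; \int_a^b w(x)\,\d x
  \;-\;T\Bigl(\,g(w(b))-g(w(a))\,\Bigr),
  \qquad g(v):=v\,f'(v)-f(v),
\end{displaymath}
valid whenever $a,b$ are points of approximate continuity of $w$ — this is exactly the statement that the ``area under $u_o$'' on $[p(a),p(b)]$ is transported forward correctly. This single formula, applied in two regimes, will yield both~\ref{thm:2:i} and~\ref{thm:2:ii}.

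First I would prove necessity. Take $u_o\in\ii_T(w)$. For~\ref{thm:2:i}: fix $x$ where $p$ is differentiable with $p'(x)\neq0$; then near $x$ the map $p$ is strictly monotone, $w$ is approximately continuous at $x$ (since $p'(x)\neq0$ forces $w(x-)=w(x+)$ there), and dividing the mass identity above by $p(y)-p(x)$ and letting $y\to x$, the term $\int_x^y w/(p(y)-p(x))\to w(x)/p'(x)\cdot p'(x)$... more carefully: write $\int_{p(x)}^{p(y)}u_o = \int_x^y w - T(g(w(y))-g(w(x)))$, divide by $p(y)-p(x)$, and use $\tfrac{1}{p(y)-p(x)}\int_x^y w(s)\,\d s \to w(x)$ together with $\tfrac{g(w(y))-g(w(x))}{p(y)-p(x)}\to 0$ (because $g\circ w$ is, like $w$, continuous at $x$ while $p(y)-p(x)\sim p'(x)(y-x)$), giving exactly~\eqref{eq:11}. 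For~\ref{thm:2:ii}: fix a jump point $x$ with $w(x+)<w(x-)$ and $v\in[w(x+),w(x-)]$. Here the genuine nonlinearity makes $f'$ increasing, so the characteristics filling the interval $[x-Tf'(w(x-)),x-Tf'(w(x+))]$ at time $0$ all converge into the shock at $(T,x)$; the value $v$ corresponds to the intermediate speed $f'(v)$. The inequality (rather than equality) comes from the fact that $u_o$ is only an entropy solution's datum, not necessarily the minimal one: the mass of $u_o$ over $[x-Tf'(v),x-Tf'(w(x+))]$ equals $T(g(v)-g(w(x+)))$ \emph{for the minimal datum} $u_o^*$, but for a general $u_o$ one only gets $\le$, because extra mass on the left portion would have to escape through the shock — this is precisely the entropy (no mass created) constraint, and I would make it rigorous by integrating $u$ over the triangular region bounded by the two characteristics of speeds $f'(v)$ and $f'(w(x+))$ and the shock curve, using the divergence form of~\eqref{eq:1} and the Rankine--Hugoniot jump relation to bound the flux through the shock side. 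The symmetric inequality for the left portion is analogous with the signs reversed.

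Then sufficiency: suppose $u_o$ satisfies~\ref{thm:2:i} and~\ref{thm:2:ii}. I want $\ss_T u_o=w$. The cleanest route is to compare with $u_o^*$ via primitives and Proposition~\ref{prop:sol}: let $U_o(x)=\int_0^x u_o$, $U_o^*(x)=\int_0^x u_o^*$, and set $U=\SS_\cdot U_o$, $U^*=\SS_\cdot U_o^*$. By the Hopf--Lax formula, $U(T,x)=\min_y\bigl(U_o(y)+T\,f^*((x-y)/T)\bigr)$ where $f^*$ is the Legendre transform of $f$; the minimizing $y$ is $y=x-Tf'(w(x))=p(x)$ precisely when the value produced is $w$. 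Conditions~\ref{thm:2:i} and~\ref{thm:2:ii} are exactly what is needed to show (a) that $U_o$ and $U_o^*$ have the same Hopf--Lax minimal value at every $x$ — on $X_i$ because~\ref{thm:2:i} pins down $U_o(p(y))-U_o(p(x))$ in the limit, hence pins down $U(T,\cdot)$ after integrating; on $X_{ii}$ because~\ref{thm:2:ii} forces the candidate minimizers $y=x-Tf'(v)$ to give values $\ge$ (resp. the balance) the shock value, so the min is attained at the shock endpoints — and therefore $U(T,\cdot)=U^*(T,\cdot)=W$ (a primitive of $w$), whence $\ss_T u_o=\partial_x\SS_T U_o=\partial_x W=w$ up to the $\reali$-null set $\reali\setminus(X_i\cup X_{ii})$ handled by Proposition~\ref{prop:D}. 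The main obstacle I anticipate is the rigorous handling of the \emph{inequalities} in~\ref{thm:2:ii} in the sufficiency direction: one must show that allowing $\int u_o$ over the left sub-interval to be strictly larger than the ``balanced'' value still produces the same shock at $(T,x)$ rather than, say, a rarefaction or a shock at a shifted location — i.e. that the Hopf--Lax minimization is insensitive to this excess. This requires checking that the excess mass is exactly compensated by a corresponding deficit forced elsewhere (by~\ref{thm:2:i} on the adjacent $X_i$ pieces), so that the competitor value of the Hopf--Lax functional at the ``wrong'' $y$ stays above the value at $y=p(x)$; assembling this carefully across a possibly dense set of shocks, using $p\in\SBVloc$ to control the jump part, is the delicate point.
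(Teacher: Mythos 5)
Your overall strategy coincides with the paper's: necessity via the mass balance between extremal backward characteristics (Proposition~\ref{prop:4}) and sufficiency via the Hopf--Lax formula, showing that the minimum of $y\mapsto U_o(y)+T\,f^*\left(\frac{x-y}{T}\right)$ is attained at $y=p(x)$. In the necessity part there are two slips, both fixable. For~\ref{thm:2:i}, the intermediate limits you invoke are individually false: $\frac{1}{p(y)-p(x)}\int_x^y w\to \frac{w(x)}{p'(x)}$, not $w(x)$, and the quotient $\frac{\left(w(y)f'(w(y))-f(w(y))\right)-\left(w(x)f'(w(x))-f(w(x))\right)}{p(y)-p(x)}$ tends to $\frac{w(x)\,f''\left(w(x)\right)\,w'(x)}{p'(x)}$, which is in general nonzero; continuity of the numerator does not make it $o\left(p(y)-p(x)\right)$. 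Your claimed values happen to add up to the correct limit, but neither claim is true as stated; the correct bookkeeping (the paper's) is to divide first by $y-x$, obtaining $w(x)\,p'(x)$, and then by $\frac{p(y)-p(x)}{y-x}\to p'(x)\neq0$. For~\ref{thm:2:ii}, the one-sided bound on the flux across the line of slope $f'(v)$ --- which is not a characteristic of $u$ --- comes from convexity, namely $f(u)-f'(v)\,u\geq f(v)-f'(v)\,v$, not from the Rankine--Hugoniot relation; with that substitution your divergence-theorem argument on the backward triangle is exactly the paper's application of Proposition~\ref{prop:4} with $\alpha$ an extremal backward characteristic and $\beta$ the line of slope $f'(v)$.

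The genuine gap is in the sufficiency direction. You correctly reduce to proving that the Hopf--Lax minimum is attained at $p(x)$ (or at $p(\bar x\pm)$ at jumps), and you flag the assembly over a possibly dense set of shocks as the delicate point, but the mechanism you propose --- the excess permitted by the inequalities in~\ref{thm:2:ii} being \emph{compensated by a deficit forced elsewhere} by~\ref{thm:2:i} --- is not substantiated and is not how the argument closes: conditions~\ref{thm:2:i}--\ref{thm:2:ii} at a jump $\bar x$ only control the functional $s(T,\bar x,\cdot)$ of~\eqref{eq:7} with base point $\bar x$, and nothing in your sketch excludes that a competitor $y\in\left]p(\bar x-),p(\bar x+)\right[$ beats $p(x)$ for a \emph{different} base point $x$. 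What is missing is, first, a device transferring the jump conditions from $\bar x$ to every other base point; in the paper this is the strict inequality $s(T,x_1,y_1)+s(T,x_2,y_2)<s(T,x_1,y_2)+s(T,x_2,y_1)$ coming from strict convexity of $f^*$ (Lemma~\ref{lem:2}, Step~2 of the proof of Theorem~\ref{thm:3}). Second, an argument assembling the pointwise information into global minimality: the map $\xi\mapsto s\left(T,x,p(\xi)\right)$ has a.e.\ derivative of the correct sign on each side of $x$ by~\ref{thm:3:1}, jumps of the correct sign by the previous step, and no Cantor part because $p\in\SBVloc(\reali;\reali)$ (via~\cite[Proposition~1.2]{Ambrosio1995}), hence it is monotone on each side of $x$, giving $s\left(T,x,p(x)\right)\le s(T,x,y)$ for all $y\in\overline{p(\reali)}$; competitors $y\notin\overline{p(\reali)}$ are then handled again through the jump estimate. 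Without these two ingredients, or an equivalent substitute, your sufficiency argument does not go through.
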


Note that~\ref{thm:2:i} holds, in particular, whenever
$p (x)$ is a Lebesgue point of $u_o$. Moreover, at~\ref{thm:2:i}, we
mean both that the limit in the left hand side of~\eqref{eq:11} exists
and that its value is $w (x)$. With reference to
Proposition~\ref{prop:D}, $X_i$ in~\eqref{eq:22} is the set where the
values of $u_o$ are constrained by~\ref{thm:2:i} and,
similarly, $X_{ii}$ is the set where the values of $u_o$ are
constrained by~\ref{thm:2:ii}.

As a side remark note that, as is to be expected, if the flow $f$ is
varied by any additive constant, both conditions~\ref{thm:2:i}
and~\ref{thm:2:ii} remain unchanged.

Towards a restatement of Theorem~\ref{thm:2} in the case of
Hamilton--Jacobi equation we provide the following Theorem, whose
proof is instrumental in the characterization of $\ii_T (w)$. Therein,
we use the Legendre transform $f^*$ of $f$, see
Proposition~\ref{prop:1} for the precise definition.

\begin{theorem}
  \label{thm:3}
  Let~\eqref{eq:H} hold, $T$ be positive an $p$ be as
  in~\eqref{eq:15}. Fix $w \in \L\infty (\reali; \reali)$ such that
  $\ii_T (w) \neq \emptyset$ and $w \in \SBVloc (\reali;
  \reali)$. Then, a map $U \in \W1\infty (\reali; \reali)$ is such
  that $\partial_x U_o \in \ii_T (w)$ if and only if the following two
  conditions hold:
  \begin{enumerate}[label={\sl(\Roman*)}]
  \item \label{thm:3:1} for all $x \in \reali$ such that $p$ is
    differentiable at $x$ and $p' (x) \neq 0$,
    \begin{equation}
      \label{eq:18}
      \lim_{y \to x}
      \dfrac{U_o\left(p (y)\right) - U_o\left(p (x)\right)}{p (y) - p (x)}
      =
      \partial_x W (x)\,;
    \end{equation}
  \item \label{thm:3:2} for all $x \in \reali$ such that
    $\partial_x W (x-) \neq \partial_x W (x+)$, for all
    $y \in \left]p (x-), p (x+) \right[$,
    \begin{eqnarray*}
      \dfrac{U_o \left(p (x+)\right) - U_o(y)}{T}
      \leq
      f^*\left(\dfrac{x-y}{T}\right)
      -
      f^*\left(\dfrac{x-p (x+)}{T}\right) \,,
      \\
      \dfrac{U_o (y) - U_o\left(p (x-)\right)}{T}
      \geq
      f^*\left(\dfrac{x-p (x-)}{T}\right)
      -
      f^*\left(\dfrac{x-y}{T}\right) \,.
    \end{eqnarray*}
  \end{enumerate}
\end{theorem}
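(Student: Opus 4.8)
The natural strategy is to reduce Theorem~\ref{thm:3} to Theorem~\ref{thm:2} via the differentiation/primitive correspondence sketched in the diagrams preceding Proposition~\ref{prop:sol}. First I would observe that, by Proposition~\ref{prop:sol} (and the classical Hopf--Lax representation it specializes to), a viscosity solution $\SS_t U_o$ to~\eqref{eq:35} satisfies $\partial_x \SS_t U_o = \ss_t (\partial_x U_o)$ pointwise a.e., so that $\partial_x U_o \in \ii_T(w)$ is equivalent to $\partial_x \SS_T U_o = w = \partial_x W$, i.e.\ to $\SS_T U_o$ and $W$ differing by a constant. Since the problem is invariant under adding constants to $U_o$ (which shifts $W$ by the same constant), the condition ``$\partial_x U_o \in \ii_T(w)$'' is exactly ``$u_o := \partial_x U_o \in \ii_T(w)$'' in the sense of Theorem~\ref{thm:2}. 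Thus the whole content is to translate conditions~\ref{thm:2:i}--\ref{thm:2:ii} on $u_o$ into conditions~\ref{thm:3:1}--\ref{thm:3:2} on its primitive $U_o$.

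For~\ref{thm:3:1}: the difference quotient $\bigl(U_o(p(y))-U_o(p(x))\bigr)/\bigl(p(y)-p(x)\bigr)$ equals $\frac{1}{p(y)-p(x)}\int_{p(x)}^{p(y)} u_o(\xi)\,\d\xi$ verbatim, since $U_o$ is a primitive of $u_o$ (it is Lipschitz, hence absolutely continuous). So~\eqref{eq:18} and~\eqref{eq:11} are literally the same statement once one notes $\partial_x W(x) = w(x)$ at points where $p$ is differentiable with $p'(x)\neq 0$ — here I would invoke the fact, implicit in Corollary~\ref{thm:1} and the discussion after it, that $w$ is the left-continuous $\BVloc$ representative and that such $x$ are continuity points of $w$. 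For~\ref{thm:3:2}: I would rewrite the two inequalities of~\ref{thm:2:ii} by integrating $u_o$ against its primitive. With the substitution of variables $\xi \mapsto \xi$ on the interval $[x-Tf'(v), x-Tf'(w(x+))]$, whose endpoints are $y := x-Tf'(v)$ (as $v$ ranges over $[w(x+),w(x-)]$, $y$ ranges over $[p(x-),p(x+)]$ by monotonicity of $f'$) and $p(x+) = x - Tf'(w(x+))$, the left-hand side $\frac{1}{T}\int_{x-Tf'(v)}^{x-Tf'(w(x+))} u_o(\xi)\,\d\xi$ becomes $\bigl(U_o(p(x+)) - U_o(y)\bigr)/T$. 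The remaining task is to identify the right-hand side $\bigl(v f'(v) - f(v)\bigr) - \bigl(w(x+)f'(w(x+)) - f(w(x+))\bigr)$ with $f^*\!\bigl(\tfrac{x-y}{T}\bigr) - f^*\!\bigl(\tfrac{x-p(x+)}{T}\bigr)$; this is exactly the Legendre identity $f^*(f'(s)) = s f'(s) - f(s)$ applied at $s=v$ and $s=w(x+)$, together with $\tfrac{x-y}{T} = f'(v)$ and $\tfrac{x-p(x+)}{T} = f'(w(x+))$. The second inequality is handled symmetrically.

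The part requiring genuine care, rather than bookkeeping, is the equivalence $\partial_x U_o \in \ii_T(w) \iff u_o \in \ii_T(w)$ at the level of \emph{regularity}: one must check that $\partial_x U_o$ is a genuine $\L\infty$ function (automatic from $U_o \in \W1\infty$) and, conversely, that the conditions of Theorem~\ref{thm:2} are insensitive to the choice of representative of $u_o$, so that ``$U_o$ is a primitive of some $u_o \in \ii_T(w)$'' is well-posed. I would also need to confirm that the hypothesis $w \in \SBVloc$ here is compatible with the hypothesis $p \in \SBVloc$ of Theorem~\ref{thm:2}: since $p(x) = x - Tf'(w(x))$ and $f' \in \C1$ is a bi-Lipschitz-on-bounded-sets change of variable (as $f'' \geq c > 0$ but also $f''$ is continuous), composition with $f'$ preserves $\SBVloc$ in both directions, so $w \in \SBVloc \iff f'\circ w \in \SBVloc \iff p \in \SBVloc$. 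Once these compatibility points are settled, the theorem follows by combining Theorem~\ref{thm:2} with the two pointwise translations above; I expect the Legendre-transform identity $f^* \circ f' = \mathrm{id}\cdot f' - f$ and the endpoint computations $\tfrac{x-p(x\pm)}{T} = f'(w(x\pm))$ to be the only computational ingredients, and the measure-theoretic identification of $\partial_x W$ with $w$ on the relevant set to be the only subtle one.
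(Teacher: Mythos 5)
Your translation work is correct but it is not where the difficulty of this theorem lies. The pointwise dictionary you set up between \ref{thm:2:i}--\ref{thm:2:ii} and \ref{thm:3:1}--\ref{thm:3:2} (the bijection $v \mapsto x - T f'(v)$ between $[w(x+),w(x-)]$ and $[p(x-),p(x+)]$, the Legendre identity $f^*\left(f'(v)\right) = v\,f'(v) - f(v)$, and the identification of difference quotients of $U_o$ with averages of $u_o$) is exactly the content of Lemma~\ref{lem:equivalence}, and your remark that $\partial_x U_o \in \ii_T(w)$ is the same membership as $u_o \in \ii_T(w)$, via the space-derivation equivalence of~\cite[Theorem~1.1]{KarlsenRisebro2002}, is also fine, as is the $\SBVloc$ compatibility observation. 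The genuine gap is that you then invoke Theorem~\ref{thm:2} as a black box \emph{in both directions}. In the paper the two theorems are proved jointly according to the scheme displayed after Lemma~\ref{lem:equivalence}: the proof attached to Theorem~\ref{thm:2} supplies only the necessity implication (via Proposition~\ref{prop:4} evaluated along minimal and maximal backward characteristics), while the sufficiency implication of \emph{both} theorems is precisely what the proof of Theorem~\ref{thm:3} is responsible for. So your plan is circular in the paper's architecture and, taken on its own, contains no argument at all for the only hard implication: that \ref{thm:3:1} and \ref{thm:3:2} force $\ss_T \partial_x U_o = w$.

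Concretely, what is missing is the verification, via the Lax--Hopf formula of Proposition~\ref{prop:1}, that for a.e.\ $x$ the map $y \mapsto s(T,x,y) = T\, f^*\!\left(\frac{x-y}{T}\right) + U_o(y)$ attains its minimum at $y = p(x)$, i.e.\ inequality~\eqref{eq:28}, which is what guarantees that the entropy solution with datum $\partial_x U_o$ equals $w$ at time $T$. The paper does this by studying $S(\xi) = s\left(T,x,p(\xi)\right)$: condition~\ref{thm:3:1} gives the sign of $S'$ at differentiability points of $p$ (Step~1, including the degenerate case $p'(\bar x)=0$); condition~\ref{thm:3:2} controls the jumps of $S$, but only at the jump point $\bar x$ itself, and the strict convexity estimate of Lemma~\ref{lem:2} is needed to transfer these inequalities to arbitrary $x$ (Step~2); an $\SBV$ chain-rule argument (\cite[Proposition~1.2]{Ambrosio1995}) is then required to upgrade ``$S' \geq 0$ a.e.\ plus favorable jumps'' to genuine monotonicity of $S$ on either side of $x$ (Step~3); and finally points $y \notin \overline{p(\reali)}$ must be handled separately (Step~4). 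None of these steps — in particular the convexity lemma and the passage from a.e.\ differential information to monotonicity for an $\SBV$ function — appears in your proposal, and without them the ``if'' direction of the theorem is unproved.
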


\noindent Here we remark that the conditions in Theorem~\ref{thm:2}
and those in Theorem~\ref{thm:3} are equivalent.

\begin{lemma}
  \label{lem:equivalence}
  Under the assumptions and notations of Theorem~\ref{thm:2} and
  Theorem~\ref{thm:3}, if $U_o \in \W1\infty (\reali; \reali)$ and
  $u_o = \partial_x U_o$, then\vspace{-0.5\baselineskip}
  \begin{center}
    \begin{tabular}{ccc}
      $u_o$ satisfies~\ref{thm:2:i} in Theorem~\ref{thm:2}
      & $\Longleftrightarrow$
      & $U_o$ satisfies~\ref{thm:3:1} in Theorem~\ref{thm:3};
      \\
      $u_o$ satisfies~\ref{thm:2:ii} in Theorem~\ref{thm:2}
      & $\Longleftrightarrow$
      & $U_o$ satisfies~\ref{thm:3:2} in Theorem~\ref{thm:3}.
    \end{tabular}
  \end{center}
\end{lemma}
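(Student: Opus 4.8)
The plan is to establish the two equivalences in Lemma~\ref{lem:equivalence} by passing, in each direction, through the relation $u_o = \partial_x U_o$, which gives $U_o(b)-U_o(a) = \int_a^b u_o(\xi)\,\d\xi$ for all $a<b$. Since $U_o \in \W1\infty$, this integral representation is exact, so the two conditions \ref{thm:2:i}/\ref{thm:3:1} and \ref{thm:2:ii}/\ref{thm:3:2} should become literal reformulations of one another once the right identifications are made.

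For the first equivalence, I would start from the difference quotient in~\eqref{eq:18} and rewrite $U_o(p(y))-U_o(p(x)) = \int_{p(x)}^{p(y)} u_o(\xi)\,\d\xi$, so that
\begin{displaymath}
  \frac{U_o(p(y))-U_o(p(x))}{p(y)-p(x)}
  =
  \frac{1}{p(y)-p(x)}\int_{p(x)}^{p(y)} u_o(\xi)\,\d\xi \,,
\end{displaymath}
valid whenever $p(y)\neq p(x)$ (which holds for $y$ near $x$ since $p$ is differentiable at $x$ with $p'(x)\neq 0$, hence $p$ is locally strictly monotone near $x$, up to handling the sign of $p'(x)$). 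Taking $y\to x$ shows that the limit in~\eqref{eq:18} exists and equals $\partial_x W(x)$ precisely when the limit in~\eqref{eq:11} exists and equals $w(x)$, using that $\partial_x W(x) = w(x)$ as the two profiles agree (both are the spatial derivative of the same object, and $w$ is the left-continuous $\BVloc$ representative). This direction is essentially bookkeeping; the only mild care needed is that the same set of points $x$ — those where $p$ is differentiable with $p'(x)\neq 0$ — is used in both statements, which is exactly how \ref{thm:2:i} and \ref{thm:3:1} are phrased.

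For the second equivalence, the key algebraic fact is the Legendre-transform identity linking the running cost expression in Theorem~\ref{thm:2} with $f^*$ in Theorem~\ref{thm:3}. Concretely, under~\eqref{eq:H} one has, for $v\in\reali$,
\begin{displaymath}
  f^*\!\left(f'(v)\right) = v\,f'(v) - f(v) \,,
\end{displaymath}
since $f'$ is the inverse of $(f^*)'$ and the supremum defining $f^*(f'(v))$ is attained at $v$. Now fix $x$ with $\partial_x W(x-)\neq\partial_x W(x+)$; since $\partial_x W$ and $w$ share the same left/right limits, this is the same as $w(x-)\neq w(x+)$, and the parametrisations match via $v\mapsto y := x - Tf'(v)$, which is a strictly decreasing bijection from $[w(x+),w(x-)]$ onto $[p(x-),p(x+)]$ because $p(x\pm)=x-Tf'(w(x\pm))$. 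Under this substitution, $\frac{x-y}{T} = f'(v)$, $\frac{x-p(x+)}{T}=f'(w(x+))$, $\frac{x-p(x-)}{T}=f'(w(x-))$, so $f^*\!\left(\frac{x-y}{T}\right) = v f'(v)-f(v)$ and similarly for the endpoints; and $U_o(p(x+))-U_o(y) = \int_y^{x-Tf'(w(x+))} u_o(\xi)\,\d\xi = \int_{x-Tf'(v)}^{x-Tf'(w(x+))} u_o(\xi)\,\d\xi$. Substituting everything, the first inequality of \ref{thm:3:2} becomes exactly the first inequality of \ref{thm:2:ii}, and likewise for the second (the orientation of the integral and the direction of the inequality are both preserved because $v\mapsto y$ is decreasing, so $y$ ranges over $]p(x-),p(x+)[$ as $v$ ranges over $]w(x+),w(x-)[$, and the endpoints are swapped consistently on both sides). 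Running the substitution backwards gives the converse.

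The main obstacle, such as it is, is purely notational: keeping the orientation of the integrals and the direction of the inequalities consistent through the decreasing change of variables $v\mapsto x-Tf'(v)$, and checking at the boundary cases $v=w(x+)$ and $v=w(x-)$ (where both sides degenerate to $0=0$) that the endpoints $p(x\pm)$ are correctly matched — this is where the identity $p(x\pm)=x-Tf'(w(x\pm))$ and the left-continuity convention for $w$ are used. No deeper input is required: both equivalences reduce to the fundamental theorem of calculus for $u_o=\partial_x U_o$ together with the elementary Legendre identity $f^*(f'(v)) = vf'(v)-f(v)$.
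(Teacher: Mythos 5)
Your argument is correct and is essentially the paper's own proof: the first equivalence is immediate from $U_o(b)-U_o(a)=\int_a^b u_o(\xi)\d\xi$, and the second rests, exactly as in the paper, on the bijection $v\mapsto y=x-T\,f'(v)$ between $[w(x+),w(x-)]$ and $[p(x-),p(x+)]$ together with the Legendre identity $v\,f'(v)-f(v)=f^*\bigl(\frac{x-y}{T}\bigr)$. One small correction to your endpoint bookkeeping: at $v=w(x+)$ (resp.\ $v=w(x-)$) only the first (resp.\ second) inequality in~\ref{thm:2:ii} degenerates to $0\leq 0$, while the other one is the nontrivial inequality over the whole interval from $p(x-)$ to $p(x+)$; since~\ref{thm:3:2} is stated only for $y$ in the open interval $\left]p(x-),p(x+)\right[$, that endpoint inequality is recovered by letting $y\to p(x\pm)$ and using the continuity of $U_o$ and $f^*$, not because both sides vanish.
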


However, the former CL--formulation leads to an easier proof of the
necessity condition, while the latter integral formulation leads to a
simpler verification of the sufficiency part. Therefore, in the proofs
of theorems~\ref{thm:2} and~\ref{thm:3} we follow this scheme:
\begin{displaymath}
  \begin{array}{ccccc}
      & u_o \in \ii_T (w)
      & \iff
      & \partial_x U_o \in \ii_T (w)
    \\
    \mbox{Theorem~\ref{thm:2}}
      & \big\Downarrow
      &
      & \big\Uparrow
      & \mbox{Theorem~\ref{thm:3}}
    \\
      & u_o \mbox{ satisfies~\ref{thm:2:i}}
      & \iff
      & U_o \mbox{ satisfies~\ref{thm:3:1}}
    \\
      & u_o \mbox{ satisfies~\ref{thm:2:ii}}
      & \iff
      & U_o \mbox{ satisfies~\ref{thm:3:2}}
    \\
      &
      & \mbox{Lemma~\ref{lem:equivalence}}
  \end{array}
\end{displaymath}

On the basis of Theorem~\ref{thm:3}, we now deal with the
Hamilton--Jacobi equation~\eqref{eq:35} and state the characterization
of $\II_T (W)$.

\begin{theorem}
  \label{thm:4}
  Let~\eqref{eq:H} hold and $T$ be positive.  Fix
  $W \in \W1\infty (\reali; \reali)$ such that
  $\II_T (W) \neq \emptyset$ and
  $\partial_x W \in \SBVloc (\reali; \reali)$. Let $p$ be as
  in~\eqref{eq:15}, with $w = \partial_x W$.  Then,
  $U_o \in \II_T (W)$ if and only if the following two conditions
  hold:
  \begin{enumerate}[label={$\sl(\Roman*^{{\scriptscriptstyle HJ}})$}]
  \item \label{thm:4:1} for all $x \in \reali$ such that $p$ is
    differentiable at $x$ and $p' (x) \neq 0$,
    \begin{displaymath}
      \lim_{y \to x}
      \dfrac{U_o\left(p (y)\right) - U_o\left(p (x)\right)}{p (y) - p (x)}
      =
      \partial_x W (x)\,;
    \end{displaymath}
  \item \label{thm:4:2} for all $x \in \reali$ such that
    $\partial_x W (x-) \neq \partial_x W (x+)$,
    \begin{displaymath}
      \begin{array}{r@{\,}c@{\,}l@{\qquad}rcl}
        \forall \,y
        &\in
        &\left]p (x-), p (x+) \right[
        & U_o (y) + T \, f^*\left(\frac{x-y}{T}\right)
        & \geq
        & W (x)
          \,,
        \\
        \forall \,y
        &\in
        &\left\{p (x-), p (x+) \right\}
        & U_o (y) + T \, f^*\left(\frac{x-y}{T}\right)
        & =
        & W (x) \,.
      \end{array}
    \end{displaymath}
  \end{enumerate}
\end{theorem}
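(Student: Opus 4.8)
The plan is to deduce Theorem~\ref{thm:4} directly from Theorem~\ref{thm:3} by observing that the two statements are, up to an additive normalization, the same assertion. Indeed, by the equivalence between solutions to~\eqref{eq:1} and~\eqref{eq:35} recalled after Definition~\ref{def:HJ}, $U_o \in \II_T(W)$ holds if and only if $\SS_T U_o = W$, and differentiating in $x$ this forces $\partial_x U_o \in \ii_T(\partial_x W)$, i.e.\ $u_o := \partial_x U_o$ satisfies the hypotheses of Theorem~\ref{thm:3} with $w = \partial_x W$. Conversely, if $\partial_x U_o \in \ii_T(w)$, then $\SS_T U_o$ and $W$ have the same space derivative, hence differ by a function of $t$ alone; both solve~\eqref{eq:35}, so that function of $t$ is constant, and $\SS_T U_o = W$ \emph{precisely when} the constant is adjusted so the two profiles agree at one point. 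This is exactly the content of the supplementary equality constraint (the ``$=$'' line) in~\ref{thm:4:2}: it pins down the single free additive constant.

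Concretely, I would first invoke Theorem~\ref{thm:3} to get that $\partial_x U_o \in \ii_T(w)$ iff~\ref{thm:3:1} and~\ref{thm:3:2} hold. Condition~\ref{thm:3:1} is literally~\ref{thm:4:1}, so nothing is needed there. For the shock condition, I would fix $x$ with $\partial_x W(x-)\neq \partial_x W(x+)$ and rewrite the two inequalities of~\ref{thm:3:2} by moving all terms to one side. Setting $G(y) := U_o(y) + T\,f^*\!\left(\frac{x-y}{T}\right)$, the first inequality of~\ref{thm:3:2} reads $G(y) \geq G(p(x+))$ for all $y \in \,]p(x-),p(x+)[$, and the second reads $G(y) \geq G(p(x-))$ for the same $y$; here one uses $f^*(\frac{x-p(x\pm)}{T}) = f^*(f'(w(x\pm)))$ together with the identity $f^*(f'(v)) = v f'(v) - f(v)$ from Proposition~\ref{prop:1}. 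Moreover, evaluating~\eqref{eq:18} at the endpoints of the shock interval — or rather using that $p$ is monotone and that $\SS_T U_o$ is Lipschitz with the correct one-sided slopes — gives $G(p(x-)) = G(p(x+))$. Hence~\ref{thm:3:2} is equivalent to: $G(p(x-)) = G(p(x+)) \leq G(y)$ for all interior $y$, which, after renaming the common endpoint value as $W(x)$, is exactly~\ref{thm:4:2}.

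The one genuinely substantive point — the step I expect to be the main obstacle — is justifying that the common value $G(p(x-)) = G(p(x+))$ equals $W(x)$, and more generally that once $\partial_x U_o \in \ii_T(w)$ the additive constant in $\SS_T U_o - W$ is fixed exactly by this endpoint condition. To handle this I would use the Lax--Oleinik / Hopf representation of the viscosity solution, namely $\big(\SS_T U_o\big)(x) = \min_{y\in\reali}\left\{U_o(y) + T\,f^*\!\left(\frac{x-y}{T}\right)\right\} = \min_{y\in\reali} G(y)$, which holds under~\eqref{eq:H} (this is classical, e.g.\ \cite[Section~10.3]{Evans}, and is the analytic counterpart of Proposition~\ref{prop:sol}). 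With this, $\SS_T U_o(x) = W(x)$ is simply $\min_y G(y) = W(x)$; the minimizers of $G$ over all of $\reali$ are, by the structure of $p$ (it records where the minimal backward characteristic hits $t=0$), exactly the points $p(x\pm)$ when $x$ is a jump point of $w$, and a single point $p(x)$ otherwise. Thus the pointwise conditions~\ref{thm:4:1} (governing the continuity points, where $G$ is minimized at the unique $p(x)$ and smoothness forces the slope condition) and~\ref{thm:4:2} (governing jump points, where $G$ attains its minimum $W(x)$ at both $p(x\pm)$ and stays $\geq W(x)$ in between) together say precisely $\min_y G(y) = W(x)$ for a.e.\ and then every $x$, using Proposition~\ref{prop:D} to pass from the a.e.\ statement on $X_i \cup X_{ii}$ to all $x$ by continuity of both $\SS_T U_o$ and $W$. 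I would close by remarking that this is the promised ``restatement'' of Theorem~\ref{thm:2} via Lemma~\ref{lem:equivalence} and the integration diagram, so that sufficiency follows from Theorem~\ref{thm:3}'s sufficiency and necessity from its necessity, with the extra endpoint equality being the only new ingredient.
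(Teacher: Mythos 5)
Your proposal is correct in substance and follows the paper's skeleton: both directions are reduced to Theorem~\ref{thm:3}, with \ref{thm:4:1} identical to \ref{thm:3:1} and the two inequalities of \ref{thm:3:2} repackaged as the statement that $G(y):=U_o(y)+T\,f^*\!\left(\frac{x-y}{T}\right)$ dominates its common value at the endpoints $p(x\pm)$. Where you genuinely diverge is in the one new step, namely identifying that common value with $W(x)$ and pinning the additive constant: you invoke the Hopf--Lax variational formula $(\SS_T U_o)(x)=\min_{y}G(y)$ together with the classical fact that the foot $p(x-)$ of the minimal backward characteristic is a minimizer, whereas the paper stays inside its own toolkit, evaluating the representation of Proposition~\ref{prop:sol} (formula~\eqref{eq:31}) along the minimal backward characteristic from $(T,\bar x)$ to compute $W(\bar x)$ and $U_o\left(p(\bar x-)\right)$ directly, and, for sufficiency, fixing the constant by applying the already proved necessity part to $U_o+c$. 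Your route is shorter but imports the Hopf--Lax representation of the viscosity semigroup as an external ingredient (legitimate under~\eqref{eq:H}, cf.~the classical theory recalled in \cite{Evans}), which is precisely the bridge the paper constructs Proposition~\ref{prop:sol} to supply internally; the paper's route is self-contained at the cost of the extra computation along $\gamma$. Two small repairs: you only need that $p(x\pm)$ \emph{are} minimizers of $G$ (that they are \emph{exactly} the set of minimizers is not needed and can fail, e.g.\ when shocks merge precisely at time $T$), and your closing ``a.e.\ and then every $x$ by continuity'' is both fragile (the set of $x$ where \ref{thm:4:1} or \ref{thm:4:2} applies need not be dense) and unnecessary: once Theorem~\ref{thm:3} gives $\partial_x \SS_T U_o=\partial_x W$, the difference is a constant, so equality at a single jump point of $\partial_x W$, supplied by the endpoint equality in \ref{thm:4:2} via Hopf--Lax, suffices. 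Note finally that, exactly as in the paper's argument, pinning the constant requires at least one jump of $\partial_x W$; when $\partial_x W$ is continuous, \ref{thm:4:2} is vacuous and neither argument fixes the constant, a limitation you share with, and do not worsen relative to, the paper's own proof.
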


\section{Geometric Properties of $\ii_T (w)$}
\label{subs:Geo}

On the basis of the characterization provided by Theorem~\ref{thm:2}
and Theorem~\ref{thm:3}, we obtain the following information on
topological and geometrical properties of the set $\ii_T (w)$.

\begin{proposition}
  \label{prop:top}
  Let~\eqref{eq:H} hold and $T$ be positive. Fix
  $w \in \L\infty (\reali; \reali)$ such that
  $\ii_T (w) \neq \emptyset$ and $w \in \SBVloc (\reali; \reali)$.
  Then, with respect to the $\Lloc1$ topology, the set $\ii_T (w)$ is:
  \begin{enumerate}[label={\sl(T\arabic*)}]
  \item \label{prop:top:1} closed;
  \item \label{prop:top:2} with empty interior.
  \end{enumerate}
\end{proposition}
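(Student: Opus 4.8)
The plan is to prove the two topological assertions separately, using the characterization of $\ii_T(w)$ furnished by Theorem~\ref{thm:2} (conditions~\ref{thm:2:i} and~\ref{thm:2:ii}).

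For closedness~\ref{prop:top:1}, suppose $u_n \to u_o$ in $\Lloc1$ with each $u_n \in \ii_T(w)$; after passing to a subsequence we may assume $u_n \to u_o$ a.e. and, since all the relevant functions lie in a fixed $\L\infty$-ball (the bound $\norma{u_n}_{\L\infty}$ is controlled because $w$ and hence $p$ are fixed, so the integral constraints pin down the essential range), we have local uniform integrability. First I would check condition~\ref{thm:2:ii}: each inequality there is of the form $\frac1T\int_{a}^{b} u_n(\xi)\d\xi \leq R$ (or $\geq R$) for fixed endpoints $a = x - Tf'(v)$, $b = x - Tf'(w(x+))$ depending only on $w$ and $v$, and the right-hand side $R$ depends only on $w$ and $v$; such an affine constraint passes to the $\Lloc1$-limit immediately, for every admissible $x$ and $v$ simultaneously. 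Next, condition~\ref{thm:2:i}: this is more delicate because it is a pointwise limit statement at each $x$ with $p$ differentiable and $p'(x)\neq 0$. Here I would use the comparison with the distinguished element $u_o^*$ of Theorem~\ref{thm:extremal}: by convexity of $\ii_T(w)$ (mentioned in the introduction, and in any case a consequence of Theorem~\ref{thm:2} since both~\ref{thm:2:i} and~\ref{thm:2:ii} are convex constraints), the set $\ii_T(w) - u_o^*$ is a convex cone, and condition~\ref{thm:2:i} for $u_o$ is equivalent to the statement that the primitive of $u_o - u_o^*$ has vanishing derivative along $p$ at such points; alternatively, and more robustly, one observes that~\ref{thm:2:i} is equivalent to requiring that the $\Lloc1$ function $u_o$ restricted to $p(\{p'\neq 0\})$ be, after the change of variables $\xi = p(x)$, a.e. equal to the prescribed $w$ pushed forward — a condition of the form ``$u_o = g$ a.e.\ on the fixed measurable set $X_i$'' for a fixed $g \in \Lloc1$. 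Indeed by the area/coarea formula the map $x \mapsto p(x)$ on $\{p'\neq 0\}$ is countably Lipschitz with nonvanishing derivative, so the limit in~\eqref{eq:11} is, at a.e.\ such $x$, simply the assertion that $p(x)$ is a Lebesgue point of $u_o$ with value $w(x)$; thus~\ref{thm:2:i} amounts to ``$u_o(y) = g(y)$ for a.e.\ $y \in X_i$'' where $g$ is determined by $w$ and $p$. This pointwise-a.e.\ equality constraint is evidently closed under $\Lloc1$ convergence. Hence $u_o \in \ii_T(w)$.

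For the empty interior~\ref{prop:top:2}, the idea is to show that from any $u_o \in \ii_T(w)$ and any $\epsilon > 0$ one can construct $\tilde u_o \notin \ii_T(w)$ with $\norma{u_o - \tilde u_o}_{\Lloc1} < \epsilon$. Here I would exploit the set $X_i$, which by Proposition~\ref{prop:D} together with $X_{ii}$ covers $\reali$ up to a null set and which we may assume has positive measure in every large interval (if $X_i$ were null, then $p$ would be a.e.\ constant on large sets and the structure degenerates — that borderline case can be handled directly, e.g.\ by perturbing on $X_{ii}$ where the constraints~\ref{thm:2:ii} are only inequalities and hence have slack at generic $v$). On a small portion of $X_i$ of positive but arbitrarily small measure, condition~\ref{thm:2:i} forces $u_o$ to equal the fixed function $g$; adding to $u_o$ any bump supported on a positive-measure subset $E \subset X_i$ of measure $< \epsilon$, with amplitude chosen so the Lebesgue-point value along $p$ is shifted off $w(x)$ on a non-null set of $x$, destroys~\ref{thm:2:i} while moving $u_o$ by less than $\epsilon$ in $\Lloc1$. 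One subtlety: the perturbation must be large enough, on a set of positive measure, to change the limit in~\eqref{eq:11} at a non-null set of points $x$ with $p'(x)\neq 0$ — this is arranged by a density/Lebesgue-point argument, taking $E$ to be (a subset of) $p^{-1}$ of a small interval inside $X_i$ where $p'\neq 0$, so that the pushforward of the perturbation under $p$ is again a genuine bump. Since $\norma{\cdot}_{\Lloc1}$ is controlled by (measure of $E$) $\times$ (amplitude) and we are free to take the amplitude fixed and the measure small, the perturbation is $\Lloc1$-small, which proves~\ref{prop:top:2}.

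I expect the main obstacle to be the stability of condition~\ref{thm:2:i} under limits in part~\ref{prop:top:1}: one must be careful that the pointwise limit statement in~\eqref{eq:11}, holding for $u_n$ at \emph{every} good point $x$, really does reduce to an a.e.-equality constraint that is manifestly $\Lloc1$-closed, rather than to a genuinely pointwise condition (which would not be closed). The resolution is the observation above that, by the change of variables $y = p(x)$ on the countably Lipschitz set $\{p' \neq 0\}$ with nonzero Jacobian, the set $\{x : p \text{ differentiable at }x,\ p'(x)\neq 0\}$ maps onto $X_i$ in a measure-theoretically faithful way, so the family of pointwise conditions~\eqref{eq:11} collapses — up to a null set of $x$, exploiting that $u_n$ and $u_o$ have Lebesgue points a.e.\ — to the single measurable constraint $u_o = g$ a.e.\ on $X_i$. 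The secondary obstacle, getting the perturbation in~\ref{prop:top:2} to fall outside $\ii_T(w)$ while staying $\Lloc1$-small, is handled by the same change of variables together with the freedom to trade amplitude against support measure.
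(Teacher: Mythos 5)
Your treatment of~\ref{prop:top:1} has a genuine gap at the point you yourself flag as delicate. The claimed equivalence of condition~\ref{thm:2:i} with the single a.e.\ constraint ``$u_o=g$ a.e.\ on $X_i$'' only holds in one direction. The a.e.\ equality on $X_i$ does \emph{not} imply~\eqref{eq:11} at every $x$ where $p$ is differentiable with $p'(x)\neq 0$: for $y$ near $x$ the interval $[p(x),p(y)]$ may contain gaps $\left]p(z-),p(z+)\right[\subseteq X_{ii}$ coming from jumps of $p$ accumulating at $x$, and their total length can be a non-negligible fraction of $p(y)-p(x)$ even though $p$ is differentiable at $x$; on those gaps $u_o$ is not determined by the a.e.\ constraint, so the averages in~\eqref{eq:11} need not converge to $w(x)$. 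To recover~\ref{thm:2:i} for the limit function one must combine the a.e.\ equality with~\ref{thm:2:ii} (which, taking the extreme values of $v$, pins the integral of $u_o$ over each \emph{whole} gap) and with the fact, from the proof of Proposition~\ref{prop:D}, that $p(P_1\cup P_2)$ is Lebesgue negligible, and then compare with $u_o^*$; none of this appears in your argument, so the verification of~\ref{thm:2:i} for the limit is missing. (Your auxiliary claim that $\ii_T(w)$ lies in a fixed $\L\infty$ ball is also unjustified -- inside the gaps the pointwise values are only constrained through integrals -- though you do not really need it.) For comparison, the paper disposes of~\ref{prop:top:1} in one line: by the strong $\L1$ continuity of the semigroup, $u_n\to u_o$ in $\Lloc1$ and $\ss_T u_n=w$ give $\ss_T u_o=w$.

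For~\ref{prop:top:2}, your main construction (a unit bump on a positive-measure subset $E\subset X_i$ of small measure) is sound and parallels the paper's first subcase, but it needs $\mathcal{L}(X_i)>0$, whereas the paper only needs one point where~\ref{thm:3:1} applies. In the complementary case you must perturb inside a gap, and your remark that the constraints~\ref{thm:2:ii} ``have slack at generic $v$'' points the wrong way: where there is slack, a perturbation of fixed amplitude and shrinking support changes the relevant integrals by an arbitrarily small amount and hence does \emph{not} violate the inequalities -- the perturbed datum may remain in $\ii_T(w)$. The working mechanism, used in the paper, exploits the fact that the constraint is saturated at the gap endpoints: subtract a constant $C$, chosen large as in~\eqref{eq:34}, on $[p(\bar x-),p(\bar x-)+1/n]$; this violates the second inequality in~\ref{thm:3:2} for $y$ slightly larger than $p(\bar x-)$, no matter how small $1/n$ is. Without such an endpoint argument your fallback case is a genuine gap.
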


\begin{proposition}
  \label{prop:geo}
  Let~\eqref{eq:H} hold and $T$ be positive. Fix
  $w \in \L\infty (\reali; \reali)$ such that
  $\ii_T (w) \neq \emptyset$ and $w \in \SBVloc (\reali; \reali)$.
  Then,
  \begin{enumerate}[label={\sl(G\arabic*)}]
  \item \label{prop:geo:1} the set $\ii_T (w)$ reduces to a singleton
    if and only if $w \in \C0 (\reali; \reali)$;
  \item \label{prop:geo:2} the set $\ii_T (w)$ is a convex cone having
    as unique extremal point at its vertex the map $u_o^*$ defined in
    Theorem~\ref{thm:extremal}.
  \item \label{prop:geo:3} if $u_o \in \ii_T (w)$ and
    $u_o \neq u_o^*$, then for any $n \in \naturali \setminus\{0\}$
    there exist $v_0, v_1, \ldots, v_N \in \ii_T (w)$ such that
    \begin{equation}
      \label{eq:10}
      u_o = \dfrac{1}{N+1} \, \sum_{i=0}^N v_i
    \end{equation}
    and $v_1-v_0, v_2-v_0, \ldots, v_N-v_0$ are linearly independent.
  \end{enumerate}
\end{proposition}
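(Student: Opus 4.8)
\textbf{Proof proposal for Proposition~\ref{prop:geo}.}

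The plan is to deduce all three statements from the characterization of $\ii_T(w)$ in Theorem~\ref{thm:2}, exploiting the affine nature of the constraints~\ref{thm:2:i} and~\ref{thm:2:ii}. First I would record the basic structural observation: condition~\ref{thm:2:i} is an affine (indeed linear, once $w$ is fixed) constraint on $u_o$, while condition~\ref{thm:2:ii} consists of two families of affine inequalities on $u_o$; hence $\ii_T(w)$ is the intersection of an affine subspace with a collection of half-spaces, which immediately gives convexity and re-proves the result of~\cite{GosseZuazua}. To see that it is a \emph{cone} with vertex $u_o^*$, I would subtract off $u_o^*$: writing $u_o = u_o^* + v$, one checks that $v$ must satisfy the \emph{homogenized} conditions, namely $\lim_{y\to x}\frac1{p(y)-p(x)}\int_{p(x)}^{p(y)} v\,\d\xi = 0$ on the set where $p'\neq0$, together with, for each jump point $x$ of $w$ and each $v^\flat\in[w(x+),w(x-)]$,
\begin{displaymath}
  \int_{x-Tf'(v^\flat)}^{x-Tf'(w(x+))} v(\xi)\,\d\xi \le 0
  \qquad\text{and}\qquad
  \int_{x-Tf'(w(x-))}^{x-Tf'(v^\flat)} v(\xi)\,\d\xi \ge 0 \,;
\end{displaymath}
these follow by subtracting the \emph{equalities} of Theorem~\ref{thm:extremal}~\ref{thm:ex:3} (satisfied by $u_o^*$) from the \emph{inequalities} of Theorem~\ref{thm:2}~\ref{thm:2:ii}. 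Since this homogenized system is invariant under $v\mapsto \lambda v$ for $\lambda\ge0$, the set $\{u_o-u_o^*:u_o\in\ii_T(w)\}$ is a convex cone, so $\ii_T(w)=u_o^*+K$ for a convex cone $K$, and $u_o^*$ is extremal. For \ref{prop:geo:1}, if $w\in\C0$ then $X_{ii}=\emptyset$ (no jumps), so only~\ref{thm:2:i} is active; since $X_i$ has full measure by Proposition~\ref{prop:D}, the Lebesgue-point identity pins down $u_o$ a.e., giving a singleton. Conversely, if $w$ has a jump at some $x_0$, I exhibit two distinct elements: take $v^\flat$ strictly between $w(x_0+)$ and $w(x_0-)$ and perturb $u_o^*$ on the nonempty interval $\bigl]p(x_0-),p(x_0+)\bigr[$ in a way that strictly decreases one integral and leaves all other constraints slack—this is possible precisely because strict inequality is allowed there—so $\ii_T(w)$ is not a singleton.

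For \ref{prop:geo:3}, the point is to show that $K$ contains $N$ linearly independent directions whenever $K\neq\{0\}$. Suppose $u_o\neq u_o^*$, so $v:=u_o-u_o^*\in K\setminus\{0\}$. Then there is a jump point $x_0$ of $w$ (the continuous part being rigid) such that $v\not\equiv 0$ on $I_0:=\bigl]p(x_0-),p(x_0+)\bigr[$, and by~\ref{prop:geo:1}'s proof this interval has positive length. The idea is to produce, inside $I_0$, infinitely many linearly independent elements of $K$: split $I_0$ into $N$ disjoint subintervals $J_1,\dots,J_N$ on each of which $v$ is not identically zero (refining if necessary, since $v$ has nonzero $\L1$ mass on $I_0$ one can arrange each piece to carry some of it, or else localize an oscillation), and set $w_i := v\,\caratt{J_i}$. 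Each $w_i$ still satisfies the homogenized jump inequalities: the two relevant integrals over the $v^\flat$-subintervals of $I_0$ only lose mass when restricted to $J_i$, keeping $\le 0$ and $\ge 0$ respectively, and condition~\ref{thm:2:i} is untouched since $J_i\subset X_{ii}$ is disjoint (up to measure zero) from $X_i$. Hence $w_1,\dots,w_N\in K$, they are linearly independent (disjoint supports, each nonzero), and averaging the $N+1$ points $v_0:=u_o^*$ and $v_j := u_o^* + (N+1)\,w_j$ (all in $\ii_T(w)$ by the cone property) yields $\frac1{N+1}\sum_{i=0}^N v_i = u_o^* + \sum_j w_j$; adjusting so that $\sum_j w_j = v$ — e.g.\ by choosing the $J_i$ to partition the support of $v$ on $I_0$ — gives $\frac1{N+1}\sum v_i = u_o$ with $v_j-v_0=(N+1)w_j$ linearly independent.

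The main obstacle I anticipate is the splitting step in~\ref{prop:geo:3}: one must guarantee that $v$ can be decomposed into $N$ pieces, supported on disjoint subsets of $X_{ii}$, \emph{each of which individually} still satisfies the one-sided inequalities of~\ref{thm:2:ii} for \emph{every} admissible $v^\flat$, not just for the endpoints. Because the constraint for a given $v^\flat$ involves an integral over a sub-arc of $I_0$ whose endpoints move continuously with $v^\flat$, chopping $v$ by a sharp indicator could, a priori, turn a valid $\le 0$ into a violated one if the removed part had negative mass. The clean way around this is to decompose not by spatial indicators but by the \emph{sign-and-scale} structure: first reduce to the case where $v$ has constant sign on $I_0$ (split $v=v^+-v^-$; each part lies in $K$ since deleting a favorably-signed piece only helps one inequality and hurts the other—so this actually needs care too), and then, for a fixed-sign $v$, any further restriction to a subinterval preserves the correct inequality direction monotonically. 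If that monotonicity argument is delicate, the fallback is to use Theorem~\ref{thm:3}'s integral formulation, where the jump constraints become pointwise bounds on $U_o$ of the form $U_o(y)+Tf^*((x-y)/T)\gtrless W(x)$; there, the admissible perturbations of $U_o$ on $\bigl]p(x-),p(x+)\bigr[$ form a genuine infinite-dimensional convex set (functions lying below a fixed concave cap and above a fixed convex floor, vanishing at the endpoints), from which extracting $N$ linearly independent directions is transparent. I would present the argument in whichever of the two formulations makes the sign bookkeeping least painful, most likely the $U_o$ one.
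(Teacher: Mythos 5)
Your treatment of \ref{prop:geo:1} and \ref{prop:geo:2} is essentially the paper's: the paper also proves the cone property by moving along $u_o^* + \theta(u_o-u_o^*)$ (in the primitive formulation of Theorem~\ref{thm:3}, using that $u_o^*$ saturates all constraints with equality, cf.\ Lemma~\ref{lem:ExUn} and Remark~\ref{rem:algebra}), and proves the ``continuous $\Rightarrow$ singleton'' direction exactly by your Lebesgue-point argument on $X_i$. Two small caveats: extremality of $u_o^*$ needs pointedness of the homogenized cone, which follows from the uniqueness statement of Lemma~\ref{lem:ExUn}; and in the converse direction of \ref{prop:geo:1} your phrase ``leaves all other constraints slack'' is not accurate, because at $u_o^*$ \emph{every} jump constraint holds with equality for every $v^\flat$ --- the admissible perturbations supported in $\left]p(x_0-),p(x_0+)\right[$ are precisely those whose primitive is nonnegative there and vanishes at both endpoints. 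Nonzero such perturbations exist, so your conclusion is still reachable, but the paper instead exhibits an explicit second element $u_o^\sharp$ (the shock prolonged backward at Rankine--Hugoniot speed) and verifies \ref{thm:2:ii} by convexity of $f$.

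The genuine gap is \ref{prop:geo:3}, and it is exactly the one you flagged without resolving. On a jump interval $I_0=\left]p(x-),p(x+)\right[$ the homogenized constraints on $v=u_o-u_o^*$ are: the primitive $\int_{p(x-)}^{z}v$ is $\geq 0$ for all $z\in I_0$ \emph{and} vanishes at $p(x+)$ (take $v^\flat=w(x\pm)$ in your two inequalities). Hence cutting $v$ by sharp indicators $\caratt{J_i}$ generically destroys both requirements, and the sign decomposition fails outright, since $v^+$ alone has strictly positive total mass on $I_0$. Your fallback in the $U_o$-formulation only claims to extract $N$ linearly independent admissible \emph{directions}; that shows $u_o$ is not extremal, but it does not produce the exact representation~\eqref{eq:10} of the \emph{given} $u_o$, which is what \ref{prop:geo:3} demands (also note the constraints there are one-sided lower bounds only, not a ``cap and floor''). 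The paper closes this by perturbing around $U_o$ itself rather than decomposing $u_o-u_o^*$: since $u_o\neq u_o^*$ there is $\bar y\in I_0$ where the inequality in~\eqref{eq:32} is strict, hence slack by some $\epsilon>0$ on a whole neighbourhood; one subtracts $N$ tent functions $A_k$ of height $\epsilon$ with disjoint supports in that neighbourhood, sets $V_k=U_o-A_k$ (admissible by the $\epsilon$-margin, the endpoint equalities being untouched) and compensates with $V_0=U_o+\sum_k A_k$, which is admissible because adding a nonnegative bump can only help lower-bound constraints; then $\frac{1}{N+1}\sum_{i=0}^N\partial_x V_i=u_o$ identically and linear independence follows from the disjoint supports. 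Alternatively your scheme could be repaired by splitting the \emph{primitive} $V$ of $v$ on $I_0$ (which is nonnegative and vanishes at the endpoints) as $\sum_j\phi_j V$ with a Lipschitz partition of unity, each piece again nonnegative with vanishing endpoint values; but as written the decomposition step is missing, so the proof of \ref{prop:geo:3} (and with it the uniqueness of the extremal point in \ref{prop:geo:2}) is incomplete.
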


\noindent Above, by \emph{singleton} we mean up to equality a.e.~or,
equivalently, that the \emph{precise representative} is unique. By
precise representative of $u_o \in \L\infty (\reali; \reali)$ we mean
that
\begin{equation}
  \label{eq:37}
  u_o (x)
  =
  \left\{
    \begin{array}{ll}
      \lim_{r\to 0} \frac{1}{r} \, \int_x^{x+r} u_o (\xi) \, \d\xi
      & \mbox{whenever this limit exists,}
      \\
      0
      & \mbox{otherwise.}
    \end{array}
  \right.
\end{equation}

\section{Proofs Related to~\S~\ref{subs:Notation}}
\label{sec:proof:Notation}

The Lebesgue measure in $\reali$ is denoted by $\mathcal{L}$. Given
$u \in \Lloc\infty (\reali; \reali)$, we define the set $\Leb (u)$ of
its Lebesgue points as the set of those $x \in \reali$ such that
$\lim_{r\to 0} \frac{1}{r} \int_x^{x+r} \modulo{u (\xi) - u (x)} \d\xi
= 0$.  By~\cite[Corollary~2, Chapter~1, \S~7]{EvansGariepy}),
$\mathcal{L} \left(\reali \setminus \Leb (u)\right) = 0$.

Below, we often use the decomposition $u = u_{ac} + u_j + u_c$ of a
$\BV$ function $u$ into its absolutely continuous part $u_{ac}$, its
jump part $u_j$, which is a possibly infinite sum of Heaviside
functions, and its Cantor part $u_c$. Whenever $u_c = 0$, we say that
$u \in \SBV (\reali; \reali)$. Recall that if
$u \in \BV (\reali; \reali)$, then its weak derivative $D u$ is a
Radon measure~\cite[\S~1.1]{EvansGariepy} that admits the
decomposition $D u = (D u)_{ac} + (D u)_j + (D u)_c$, $(D u)_{ac}$
being absolutely continuous with respect to $\mathcal{L}$, $(D u)_j$
is a, possibly infinite, sum of Dirac deltas and $(D u)_c$ is the
Cantor part of $Du$. As is well
known~\cite[Corollary~3.33]{AmbrosioFuscoPallara}, up to sets of
Lebesgue measure $0$,
\begin{displaymath}
  (D u)_{ac} = D(u_{ac}) \,,\quad
  (D u)_j = D(u_j) \;\; \mbox{ and } \;\;
  (D u)_c = D(u_c) \,.
\end{displaymath}
We also denote by $u'$ the density of $(Du)_{ac}$ with respect to the
Lebesgue measure, so that $(D u)_{ac} = u' \, \mathcal{L}$ and
$u' = (u_{ac})'$.  By~\cite[Theorem~3.28]{AmbrosioFuscoPallara} for
a.e.~$x \in \reali$, $u' (x)$ coincides with the limit of the
incremental ratio of $u$ or $u_{ac}$ at $x$.

\smallskip

For later use, we need the following variation of the Area Formula,
see e.g.~\cite[\S~2.10]{AmbrosioFuscoPallara}.

\begin{lemma}
  \label{lem:1}
  Let $\phi \in \SBV(\reali; \reali)$ be weakly increasing. Then, for
  any measurable set $E$,
  \begin{displaymath}
    \int_E \phi_{ac}' (\xi) \d\xi
    =
    \int_\reali
    \card\left(E \cap \phi^{-1} (\xi)\right) \d\xi
  \end{displaymath}
  with $\phi'_{ac}$ being the absolutely continuous part of $\phi'$.
\end{lemma}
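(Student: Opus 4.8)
The plan is to prove Lemma~\ref{lem:1} by combining the coarea formula for $\BV$ functions with the special structure of $\SBV$ monotone maps. First I would recall that for a weakly increasing $\SBV$ function $\phi$, the distributional derivative decomposes as $D\phi = \phi'_{ac}\,\mathcal{L} + (D\phi)_j$, with no Cantor part, and the jump part is a countable (possibly infinite) sum of nonnegative Dirac masses located at the jump set $J_\phi$. The key observation is that the ``area/coarea'' content of $\phi$ along a measurable set $E$, namely $\int_E \phi'_{ac}$, sees only the absolutely continuous part of the derivative: this is precisely the part of the increment of $\phi$ that is realized continuously, and it is exactly this part that is counted by the multiplicity function $\xi \mapsto \card(E \cap \phi^{-1}(\xi))$.

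The main step is to establish the identity first when $\phi$ is additionally \emph{continuous} and weakly increasing (hence $\W{1}{1}_{\mathrm{loc}}$ with $\phi' = \phi'_{ac} \geq 0$ a.e.), where it reduces to the classical area formula / change of variables for monotone absolutely continuous functions: $\int_E \phi'(\xi)\,\d\xi = \int_\reali \card(E \cap \phi^{-1}(\xi))\,\d\xi$. This is standard (it follows, e.g., from the Banach–Zarecki characterization of absolute continuity plus Lusin's (N) property, or directly from \cite[\S~2.10]{AmbrosioFuscoPallara} applied to the Lipschitz-after-reparametrization map). Then I would pass to the general $\SBV$ case by ``removing the jumps'': write $\phi = \psi + h$, where $h$ is the jump part, i.e. $h(x) = \sum_{x_k \le x} \sigma_k$ with $\sigma_k = \phi(x_k+) - \phi(x_k-) > 0$ the jump heights, and $\psi := \phi - h$. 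Since $\phi$ has no Cantor part, $\psi$ is continuous; since $\phi$ is weakly increasing and $h$ records exactly the jumps of $\phi$, $\psi$ is also weakly increasing; and $\psi'_{ac} = \phi'_{ac}$ a.e. So the left-hand side is unchanged upon replacing $\phi$ by $\psi$.

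It remains to compare the right-hand sides, i.e. to show $\int_\reali \card(E \cap \phi^{-1}(\xi))\,\d\xi = \int_\reali \card(E \cap \psi^{-1}(\xi))\,\d\xi$ for every measurable $E$. Here one uses that $\phi$ and $\psi$ differ by a monotone rearrangement that inserts, at each jump point $x_k$, a ``gap'' interval of length $\sigma_k$ into the range; concretely, the range of $\psi$ is obtained from that of $\phi$ by collapsing the countably many half-open gap intervals $\bigl]\phi(x_k-),\phi(x_k+)\bigr[$ (more precisely, for the left-continuous representative, the fibers $\phi^{-1}(\xi)$ for $\xi$ strictly inside a gap are empty, while at the gap endpoints the fibers agree with those of $\psi$ up to the single point $x_k$). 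Thus the two multiplicity functions agree outside a set of $\xi$ of measure $\sum_k \sigma_k$ which contributes $0$ to neither integral only if one is careful — the honest statement is that $\card(E \cap \phi^{-1}(\xi))$ and $\card(E \cap \psi^{-1}(\xi))$ differ only on the at-most-countable set of gap endpoints and on the open gaps where the former is $0$; since $\psi^{-1}$ restricted to an open gap is also empty-or-a-point, the integrals coincide. I expect the main obstacle to be exactly this bookkeeping with endpoints and the choice of representative: one must handle with care the left-continuous convention fixed in Section~\ref{subs:Notation} (so that $\phi(x) = \phi(x-)$) to make the fiber count and the range-gap description precise, and verify that the exceptional sets where multiplicities disagree are $\mathcal{L}$-null in $\xi$. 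Everything else is a routine reduction to the classical monotone change of variables.
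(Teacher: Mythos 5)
Your reduction of the left-hand side is fine (with the caveat that ``continuous and weakly increasing $\Rightarrow \W{1}{1}_{\mathrm{loc}}$'' is false in general --- think of the Cantor function --- and is only saved here because you keep the $\SBV$ hypothesis, so that $\psi=\phi-h$ is in fact locally absolutely continuous). The genuine gap is in the step where you compare the two multiplicity integrals. Your ``honest statement'' --- that $\card\left(E\cap\phi^{-1}(\xi)\right)$ and $\card\left(E\cap\psi^{-1}(\xi)\right)$ differ only on the open gaps and on a countable set of gap endpoints --- is false: the two functions have different range parametrizations, so comparing them at the \emph{same} $\xi$ is not meaningful. Take the single-jump example $\phi(x)=x$ for $x<0$, $\phi(x)=x+1$ for $x\geq 0$, so $\psi(x)=x$, and $E=[1,2]$: then $\card\left(E\cap\phi^{-1}(\xi)\right)=1$ exactly for $\xi\in[2,3]$ while $\card\left(E\cap\psi^{-1}(\xi)\right)=1$ exactly for $\xi\in[1,2]$; the two multiplicity functions disagree on a set of measure $2$, far from the gap $\left]0,1\right[$, and only their \emph{integrals} coincide. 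To make your route work you must replace the claimed a.e.\ pointwise agreement by an actual change of variables: either split the domain into the intervals on which $h$ is constant (where $\phi$ and $\psi$ differ by a constant, so the multiplicity integrals over each piece agree, and sum by monotone convergence) --- but note this decomposition degenerates when the jump points are dense --- or introduce the gap-removing map $\Theta(\xi)=\xi-\sum_{k\colon \phi(\xi_k+)\leq\xi}\alpha_k$, check that $\psi=\Theta\circ\phi$ off the jump set and that $\Theta$ pushes Lebesgue measure on the complement of the gaps onto Lebesgue measure, and only then transfer the integral. As it stands, the key identity is asserted on the basis of a false pointwise statement, so the proof is not complete.

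For comparison, the paper avoids this two-function bookkeeping altogether: it fixes $\phi$ itself, regards both sides as measures $\mu(E)=\int_E\phi_{ac}'$ and $\nu(E)=\int_\reali\card\left(E\cap\phi^{-1}(\xi)\right)\d\xi$, and verifies $\mu([a,b])=\nu([a,b])=\phi(b)-\phi(a)-\sum_{\xi_n\in\left]a,b\right[}\alpha_n$ for $a,b$ continuity points of $\phi$ (computing $\nu([a,b])$ as $\mathcal{L}\left([\phi(a),\phi(b)]\right)$ minus the total length of the jump gaps inside), and concludes by density of such intervals. If you want to keep your reduction-to-the-AC-case strategy, incorporate one of the two fixes above; otherwise the direct measure-comparison argument is both shorter and immune to the dense-jump issue.
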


\begin{proof}
  Throughout this proof, $A^c$ is the complement of the set $A$ in
  $\reali$.  Denote by $H_\xi$ the Heaviside function centered at
  $\xi$, i.e., $H_\xi (x) = 1$ for $x \geq \xi$ and $H_\xi (x) = 0$
  for $x < \xi$.

  Define $\phi_s = \sum_n \alpha_n \, H_{\xi_n}$, with
  $\alpha_n = \phi (\xi_n+) - \phi (\xi_n-)$ and
  $\{\xi_n \colon n \in \naturali\}$ being the set of points of jump
  in $\phi$.  Since $\phi \in \SBV (\reali; \reali)$,
  by~\cite[Chapter~2, Section~25]{RieszNagy}, the function
  $\phi_{ac} = \phi -\phi_s$ is in $\AC (\reali; \reali)$.

  For any measurable set $E$, define
  $\mu (E) = \int_E \phi_{ac}' (\xi) \d\xi$. By construction, $\mu$ is
  a measure and for any $a,b \in \reali$ with $a \leq b$, we have that
  $\mu ([a,b]) = \phi_{ac} (b) - \phi_{ac} (a)$. Hence, choosing $a$
  and $b$ among the continuity points of $\phi$, we have that
  \begin{equation}
    \label{eq:16}
    \mu ([a,b])
    =
    \phi (b) - \phi (a) - \sum_{n \colon \xi_n \in \left]a,b\right[} \alpha_n \,.
  \end{equation}
  Define, for any measurable set $E$, also
  $\nu (E) := \int_{\reali} \card \left(E \cap \phi^{-1} (\xi)\right)
  \d\xi$.  The set function $\nu$ is a measure, as it follows from the
  Monotone Convergence Theorem, see
  e.g.~\cite[Theorem~1.19]{AmbrosioFuscoPallara}, and from the
  countable additivity of the counting measure.

  Denote
  $A := \bigcup_{n \in \naturali} \left]\phi (\xi_n-), \phi
    (\xi_n+)\right[$.  For any $a,b \in \reali$, with $a \leq b$ being
  continuity points of $\phi$, by the monotonicity of $\phi$ note that
  \begin{displaymath}
    \card\left( [a,b] \cap \phi^{-1} (\xi)\right)
    =
    \left\{
      \begin{array}{l@{\qquad}l}
        0
        & \xi < \phi(a) \mbox{ or } \xi > \phi(b) \mbox{ or } \xi \in A\,,
        \\
        1
        & \phi^{-1}(\xi) \mbox{ is a singleton in }[a,b] \cap A^c\,,
        \\
        +\infty
        & \mbox{otherwise.}
      \end{array}
    \right.
  \end{displaymath}
  Indeed, if $ [a,b] \cap \phi^{-1} (\xi)$ contains two points, say
  $\xi_1$ and $\xi_2$ with $\xi_1 < \xi_2$, then
  $[\xi_1, \xi_2] \subseteq \left( [a,b] \cap \phi^{-1} (\xi)\right)$,
  so that $\card\left( [a,b] \cap \phi^{-1} (\xi)\right) = +\infty$
  and there exist only countably many such points. As a consequence,
  \begin{displaymath}
    \nu ([a,b])
    =
    \int_\reali \card\left( [a,b] \cap \phi^{-1} (\xi)\right) \d\xi
    =
    \int_{\reali} \caratt{[\phi(a), \phi(b)] \cap A^c} (\xi ) \, \d\xi
    =
    \mathcal{L} ([\phi(a), \phi (b)] \cap A^c)
  \end{displaymath}
  so that
  \begin{eqnarray*}
    [\phi(a), \phi (b)] \cap A^c
    & =
    & [\phi(a), \phi (b)] \cap
      \left(\bigcup_{n \in \naturali} \left] \phi (\xi_n-), \phi (\xi_n+) \right[\right)^c
    \\
    & =
    &  [\phi(a), \phi (b)] \setminus
      \bigcup_{n \colon \xi_n \in \left]a, b\right[}
      \left] \phi (\xi_n-), \phi (\xi_n+) \right[
  \end{eqnarray*}
  and the latter union in the right hand side above is contained in
  $[\phi(a), \phi (b)]$, due to our choice of $a$ and $b$. Passing to
  the Lebesgue measure of the sets on the two sides of the latter
  equality,
  \begin{eqnarray}
    \nonumber
    \nu \left([a,b]\right)
    & =
    & \mathcal{L} \left([\phi(a), \phi (b)] \cap A^c\right)
    \\
    \nonumber
    & =
    & \mathcal{L}\left([\phi(a), \phi (b)] \mathbin{\big\backslash}
      \bigcup_{n \colon \xi_n \in \left]a, b\right[}
      \left] \phi (\xi_n-), \phi (\xi_n+) \right[\right)
    \\
    \nonumber
    & =
    & \mathcal{L} \left([\phi(a), \phi (b)]\right)
      -
      \sum_{n \colon \xi_n \in \left]a, b\right[}
      \mathcal{L} \left(\left] \phi (\xi_n-), \phi (\xi_n+) \right[\right)
    \\
    \label{eq:19}
    & =
    & \phi (b) - \phi(a)
      -
      \sum_{n \colon \xi_n \in \left]a, b\right[} \alpha_n
  \end{eqnarray}
  By~\eqref{eq:16} and~\eqref{eq:19} we have that $\mu = \nu$ on all
  intervals $[a,b]$ with $a$, $b$ continuity points of $\phi$. The
  choice of $\phi$ ensures that these points are dense in $\reali$,
  completing the proof.
\end{proof}

\begin{proofof}{Proposition~\ref{prop:D}}
  Introduce the sets
  \begin{displaymath}
    P_1
    :=
    \left\{
      x \in \reali \colon p\mbox{ or } p_{ac}
      \mbox{ is not differentiable at }x
    \right\}
    \quad \mbox{ and } \quad
    P_2
    :=
    \left\{
      x \in \reali \setminus P_1 \colon p' (x) = 0
    \right\} \,.
  \end{displaymath}
  Remark that if $x \in P_2$, then $p$, $p_{ac}$ and $p_j$ are all
  differentiable at $x$ and $p'_{ac} (x) = p'_j (x) = 0$, since $p$,
  $p_{ac}$ and $p_j$ are all weakly increasing by~\ref{thm:1:2} in
  Theorem~\ref{thm:1}.

  By~\cite[Chapter~1, Section~2]{RieszNagy}, $P_1$ has Lebesgue
  measure $0$ and $\int_{P_1 \cup P_2} p_{ac}' (x) \d{x} =0$. Note
  that Lemma~\ref{lem:1}, which can be applied since
  $p \in \SBVloc (\reali; \reali)$, ensures that
  $\int_{P_1 \cup P_2} p_{ac}' (x) \d{x} = \int_{\reali}
  \card\left((P_1 \cup P_2) \cap p^{-1} (\xi)\right) \d\xi$, so that
  $\card\left((P_1 \cup P_2) \cap p^{-1} (\xi)\right) =0$ for
  a.e.~$\xi \in \reali$ and, equivalently, $p (P_1 \cup P_2)$ is
  negligible, i.e.,
  \begin{equation}
    \label{eq:21}
    \mathcal{L}\left(p (P_1 \cup P_2)\right) = 0 \,.
  \end{equation}
  Observe that
  \begin{equation}
    \label{eq:25}
    p (\reali) = X_i \cup p (P_1 \cup P_2) \,.
  \end{equation}
  By~\eqref{eq:O}, the function $p$ is non decreasing, hence $X_{ii}$
  is an at most countable union of non empty, disjoint and open
  intervals.  By the properties of backward characteristics, the set
  $\reali \setminus \left(X_{ii} \cup p (\reali) \right)$ is at most
  countable, so that
  \begin{equation}
    \label{eq:20}
    \mathcal{L}
    \left(\reali \setminus \left(X_{ii} \cup p (\reali) \right)\right) = 0 \,.
  \end{equation}
  Using the relations above,
  \begin{displaymath}
    \begin{array}{@{}rcl@{\qquad}l@{}}
      \reali \setminus (X_i \cup X_{ii})
      & \subseteq
      & \reali \setminus \left(X_i \cup X_{ii} \cup p (P_1 \cup P_2)\right)
        \cup p (P_1 \cup P_2)
      & \mbox{[by~\eqref{eq:25}]}
      \\
      & \subseteq
      & \reali \setminus \left(X_{ii} \cup p (\reali)\right)
        \cup p (P_1 \cup P_2)
      \\
      \mathcal{L}\left(\reali \setminus (X_i \cup X_{ii})\right)
      & \leq
      & \mathcal{L}\left(\reali \setminus \left(X_{ii} \cup p (\reali)\right)\right)
        + \mathcal{L} \left(p (P_1 \cup P_2)\right)
      & \mbox{[by~\eqref{eq:21} and~\eqref{eq:20}]}
      \\
      & \leq
      & 0 \,,
    \end{array}
  \end{displaymath}
  completing the proof.
\end{proofof}

The following classical result is of use in the subsequent proof as
well as in what follows,

\begin{proposition}[{\cite[Lemma~3.2]{Dafermos77}}]
  \label{prop:4}
  Let $f$ satisfy~\eqref{eq:H} and let $u$ be a weak entropy solution
  to~\eqref{eq:1}. Let $a, b \in [0,T]$ with $a < b$ and choose two
  maps $\alpha, \beta \in \C{0,1} ([a,b]; \reali)$ with
  $\alpha \leq \beta$. Then, for a.e.~$t_1, t_2 \in [a,b]$ with
  $t_1 \leq t_2$,
  \begin{eqnarray*}
    \int_{\alpha (t_2)}^{\beta (t_2)} u (t_2,x) \d{x}
    -
    \int_{\alpha (t_1)}^{\beta (t_1)} u (t_1,x) \d{x}
    & =
    & \int_{t_1}^{t_2} \left(
      f\left(u (t, \alpha (t)-)\right) - \dot \alpha (t) \, u (t, \alpha (t)-)
      \right)
      \d{t}
    \\
    &
    & -
      \int_{t_1}^{t_2} \left(
      f\left(u (t, \beta (t)+)\right) - \dot \beta (t) \, u (t, \beta (t)+)
      \right)
      \d{t} \,.
  \end{eqnarray*}
\end{proposition}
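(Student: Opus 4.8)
\begin{proofof}{Proposition~\ref{prop:4}}
  The plan is to derive the claimed identity from the integrated
  balance law satisfied by $u$ over the space--time region delimited by
  the horizontal segments $\{t=t_1\}$, $\{t=t_2\}$ and by the two
  Lipschitz graphs $x=\alpha(t)$, $x=\beta(t)$. A direct use of the
  weak formulation over this region would produce lateral flux terms
  involving the traces of $u$ taken from \emph{inside} the region,
  namely $u\left(t,\alpha(t)+\right)$ and $u\left(t,\beta(t)-\right)$;
  to get instead the outer traces $u\left(t,\alpha(t)-\right)$,
  $u\left(t,\beta(t)+\right)$ of the statement, I would argue on the
  slightly enlarged region
  $\Omega_\eta := \left\{(t,x)\colon t_1<t<t_2,\ \alpha(t)-\eta<x<\beta(t)+\eta\right\}$
  and let $\eta\to 0^+$ only at the end.

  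First, a weak entropy solution is in particular a distributional
  solution to~\eqref{eq:1}: choosing in Definition~\ref{def:HCL} a
  constant $k$ not above the essential infimum of $u$, and then one not
  below its essential supremum, and using that
  $\iint_{\reali^2}\left(k\,\partial_t\phi+f(k)\,\partial_x\phi\right)\d{t}\d{x}=0$
  for every $\phi\in\Cc1\!\left(\left]0,+\infty\right[\times\reali;\reali\right)$,
  one obtains
  \begin{displaymath}
    \iint_{\reali^2}\left(u\,\partial_t\phi+f(u)\,\partial_x\phi\right)\d{t}\d{x}=0\,,
    \qquad
    \phi\in\Cc1\!\left(\left]0,+\infty\right[\times\reali;\reali\right)\,,
  \end{displaymath}
  and, after mollification, the same for every Lipschitz $\phi$ with
  compact support in $\left]0,+\infty\right[\times\reali$. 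It is enough
  to treat $0<t_1<t_2<T$, a restriction that covers almost every pair
  $t_1\leq t_2$ in $[a,b]$ since $[a,b]\subseteq[0,T]$. For a fixed
  $\eta>0$, I would then insert in the identity above a sequence of
  Lipschitz test functions
  $\phi_n(t,x)=\theta_n(t)\,\zeta_n\!\left(\min\left\{x-\alpha(t)+\eta,\ \beta(t)+\eta-x\right\}\right)$,
  where $\theta_n$ is a cut-off in time converging to
  $\caratt{]t_1,t_2[}$ and supported in $[t_1,t_2]$, and
  $\zeta_n\colon\reali\to[0,1]$ vanishes on $\left]-\infty,0\right]$,
  equals $1$ on $\left[1/n,+\infty\right[$ and is affine in between, so
  that $\phi_n\to\caratt{\Omega_\eta}$.

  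Letting $n\to\infty$, the terms carrying $\theta_n'$ are concentrated
  near $\{t=t_1\}$ and $\{t=t_2\}$ and, by the Lebesgue differentiation
  theorem applied to
  $t\mapsto\int_{\alpha(t)-\eta}^{\beta(t)+\eta}u(t,x)\d{x}$, they
  converge for a.e.~$t_1,t_2$ to
  $\int_{\alpha(t_2)-\eta}^{\beta(t_2)+\eta}u(t_2,x)\d{x}-\int_{\alpha(t_1)-\eta}^{\beta(t_1)+\eta}u(t_1,x)\d{x}$;
  the terms carrying $\zeta_n'$ concentrate on the two lateral graphs,
  where, e.g.~near $x=\alpha(t)-\eta$, the combination
  $u\,\partial_t\phi_n+f(u)\,\partial_x\phi_n$ equals
  $\zeta_n'\cdot\left(f(u)-\dot\alpha(t)\,u\right)$ up to terms
  supported near the corners, and --- since by~\eqref{eq:H}
  and~\cite[Theorem~16.1]{Smoller} the section $u(t,\cdot)$ lies in
  $\BVloc(\reali;\reali)$ for every $t\in\left]0,T\right[$, hence
  admits the right limit $u\bigl(t,(\alpha(t)-\eta)+\bigr)$ --- these
  converge, by dominated convergence (uniform $\L\infty$ bound on $u$,
  continuity of $f$, $\L\infty$ bound on $\dot\alpha$), to
  $\int_{t_1}^{t_2}\bigl(f\bigl(u(t,(\alpha(t)-\eta)+)\bigr)-\dot\alpha(t)\,u\bigl(t,(\alpha(t)-\eta)+\bigr)\bigr)\d{t}$,
  and symmetrically along $x=\beta(t)+\eta$. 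Collecting the four
  contributions yields the integrated balance law on $\Omega_\eta$.
  Finally, letting $\eta\to 0^+$, the spatial integrals converge by
  local boundedness of $u$, while for every $t\in\left]0,T\right[$ the
  mere existence of the left limit $u\left(t,\alpha(t)-\right)$ forces
  $u\bigl(t,(\alpha(t)-\eta)+\bigr)\to u\left(t,\alpha(t)-\right)$ as
  $\eta\to 0^+$, and likewise
  $u\bigl(t,(\beta(t)+\eta)-\bigr)\to u\left(t,\beta(t)+\right)$; a
  last dominated convergence delivers the stated equality.

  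The main obstacle is the passage $n\to\infty$: one has to make
  precise the localization of $\partial_t\phi_n$ and $\partial_x\phi_n$
  on $\partial\Omega_\eta$, in particular that near each lateral graph
  the two derivatives merge into the single flux
  $f(u)-\dot\alpha(t)\,u$, respectively $f(u)-\dot\beta(t)\,u$ --- a
  chain-rule type cancellation reflecting the gradient of the function
  defining the graph --- and that the one-sided $\BV$ traces of $u$
  along those graphs are correctly captured in the limit. The
  enlargement by $\eta$ is precisely the device that converts these
  inner traces into the outer ones of the statement, so that no
  explicit appeal to the Rankine--Hugoniot condition along the jump set
  of $u$ is required.
\end{proofof}
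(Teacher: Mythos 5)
The paper does not prove Proposition~\ref{prop:4} at all: it is imported verbatim from~\cite[Lemma~3.2]{Dafermos77}, so there is no internal argument to compare yours with. Your blind proof is a correct, self-contained rendition of the standard argument behind that classical lemma: pass from the entropy inequalities to the distributional form of~\eqref{eq:1} (extreme choices of $k$), test with Lipschitz approximations of the characteristic function of the curvilinear trapezoid, use that for every $t\in\left]0,T\right[$ the section $u(t,\cdot)$ is in $\BVloc(\reali;\reali)$ (which, as the paper notes after Definition~\ref{def:HCL}, follows from~\eqref{eq:H} via~\cite[Theorem~16.1]{Smoller}) so that the lateral strip averages converge to one-sided traces, and your signs along the two graphs match the statement. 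The $\eta$-enlargement is a nice touch: it delivers directly the \emph{outer} traces $u\left(t,\alpha(t)-\right)$, $u\left(t,\beta(t)+\right)$ appearing in the statement, whereas the route implicit in the paper would obtain the inner traces first and then identify the two flux integrands a.e.\ along a Lipschitz curve (Rankine--Hugoniot when the curve rides a shock, equality of traces otherwise) --- exactly the content of the remark following Proposition~\ref{prop:4}. Two points deserve tightening if you write this out in full: (i) justify the a.e.\ chain rule for $\phi_n=\theta_n\,\zeta_n\circ d$ with $d$ only Lipschitz (easiest fix: take $\zeta_n$ smooth, and note the ridge $\left\{x=\tfrac{\alpha(t)+\beta(t)}{2}\right\}$ and the level sets $\{d=0\}$, $\{d=1/n\}$ are Lebesgue negligible in the $(t,x)$ plane); (ii) the simultaneous limit in $n$ of the time-cutoff and space-cutoff terms should be decoupled or controlled by the uniform $\mathcal{O}(1/n)$ error granted by the $\L\infty$ bound on $u$, after which the continuity of $t\mapsto\int_{\alpha(t)-\eta}^{\beta(t)+\eta}u(t,x)\d{x}$ (from $u\in\C0([0,T];\Lloc1)$) identifies the limit at every, not just almost every, pair $t_1\leq t_2$ in $\left]0,T\right[$. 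Neither point is a genuine gap; the argument stands.
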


Remark that if $\gamma$ is a Lipschitz curve, then
\begin{displaymath}
  \int_{t_1}^{t_2} \left(
    f\left(u (t, \gamma (t)-)\right) - \dot \gamma (t) \, u (t, \gamma (t)-)
  \right)
  \d{t}
  =
  \int_{t_1}^{t_2} \left(
    f\left(u (t, \gamma (t)+)\right) - \dot \gamma (t) \, u (t, \gamma (t)+)
  \right)
  \d{t} \,,
\end{displaymath}
as it follows from Proposition~\ref{prop:4} in the case
$\alpha = \beta = \gamma$.that

\begin{proofof}{Proposition~\ref{prop:sol}}
  Introduce the map $\tilde U (t,x) = (\SS_t U_o) (x)$. We prove that
  $\tilde U$ coincides with the map defined in~\eqref{eq:31}.
  By~\cite[Theorem~1.1]{KarlsenRisebro2002}, the map
  $\partial_x \tilde U$ solves the Conservation Law~\eqref{eq:1}, so
  that
  \begin{displaymath}
    \partial_x \tilde U (t,x)
    =
    u (t,x)
    =
    \partial_x \int_{\gamma (t)}^x u (t,\xi) \d\xi \,.
  \end{displaymath}
  Hence, there exist a map $\Upsilon \in \W1\infty ([0,T]; \reali)$
  with $\Upsilon (0) = 0$ and a $c \in \reali$ such that
  \begin{displaymath}
    \tilde U (t,x)
    =
    \int_{\gamma (t)}^x u (t,\xi) \d\xi + \Upsilon (t) + c
  \end{displaymath}
  and, using Proposition~\ref{prop:4} with $\alpha (t) = \gamma (t)$,
  $\beta (t) = x$, $t_2=t$ and $t_1=0$,
  \begin{eqnarray*}
    &
    & \tilde U (t, x) - \tilde U_o (x)
    \\
    & =
    & \int_{\gamma (t)}^x u (t,\xi) \d\xi
      -
      \int_{0}^x u (t,\xi) \d\xi
      + \Upsilon (t)
    \\
    & =
    & \int_{0}^{t}
      \left(
      f\left(u\left(\tau,\gamma (\tau)-\right)\right)
      -
      \dot\gamma (\tau) \, u\left(\tau, \gamma (\tau)-\right)
      \right) \d\tau
      -
      \int_{0}^{t}
      f\left(u(\tau,x+)\right)
      \d\tau
      + \Upsilon (t)
  \end{eqnarray*}
  By the differentiability properties of $\tilde U$,
  see~\cite[Theorem~1, Section~10.1.2]{Evans}, we have
  \begin{equation}
    \label{eq:36}
    \tilde U (t, x) - \tilde U_o (x)
    =
    -\int_{0}^{t} f\left(\partial_x \tilde U (\tau, x)\right) \d\tau
    =
    -\int_{0}^{t} f\left(u (\tau, x)\right) \d\tau \,.
  \end{equation}
  So that
  \begin{displaymath}
    \Upsilon (t)
    =
    \int_{0}^{t}
    \left(
      \dot\gamma (\tau) \, u\left(\tau, \gamma (\tau)-\right)
      -
      f\left(u\left(\tau,\gamma (\tau)-\right)\right)
    \right) \d\tau
  \end{displaymath}
  completing the proof.
\end{proofof}

\section{Proofs Related to \S~\ref{subs:NonEmpty}}
\label{sec:proof:NonEmpty}

A tool used below is the following classical representation
formula for the solutions to~\eqref{eq:1}.

\begin{proposition}[{\cite[Theorem~2.1]{Lax}}]
  \label{prop:1}
  Let~\eqref{eq:H} hold and $u_o \in \L1 (\reali; \reali)$. The
  solution to
  \begin{equation}
    \label{eq:6}
    \left\{
      \begin{array}{l}
        \partial_t u + \partial_x f (u) = 0
        \\
        u (0,x) = u_o (x)
      \end{array}
    \right.
  \end{equation}
  in the sense of Definition~\ref{def:HCL} is the map
  \begin{equation}
    \label{eq:7}
    u (t,x) = g\left(\frac{x-y (t,x)}{t}\right)
    \quad \mbox{ where } \quad
    \begin{array}{l}
      y (t,x) \mbox{ minimizes } y \to s (t,x,y)\,,
      \\
      s (t,x,y) = t\,
      f^*\!\left(\frac{x-y}{t}\right) + \int_0^y u_o (\xi) \d\xi \,,
      \\
      f^* (\lambda) = \lambda \, g (\lambda) - f\left(g (\lambda)\right) \,,
      \\
      g (\lambda) = (f')^{-1} (\lambda) \,.
    \end{array}
  \end{equation}
\end{proposition}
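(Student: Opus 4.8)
This is a classical result due to Lax, and a proof along standard lines is what I would aim to reconstruct here.

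The plan is to verify that the candidate $u(t,x) = g\!\left(\frac{x-y(t,x)}{t}\right)$, built from the minimizer of $y \mapsto s(t,x,y)$, coincides with the entropy solution. First I would establish well-posedness of the variational problem: since $f'' \geq c > 0$, the Legendre transform $f^*$ is finite, $\C2$ and itself uniformly convex, so $\lambda \mapsto t\,f^*\!\left(\frac{x-y}{t}\right)$ is strictly convex and superlinear in $y$; combined with the Lipschitz bound $\left|\int_0^y u_o(\xi)\,\d\xi\right| \leq \norma{u_o}_{\L\infty}\,\modulo{y}$ coming from $u_o \in \L1 \cap \L\infty$ (or, more carefully, working first with bounded $u_o$ and the growth of $f^*$), the function $s(t,x,\cdot)$ attains a minimum, and the set of minimizers is a compact interval. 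I would denote by $y_-(t,x)$ and $y_+(t,x)$ the smallest and largest minimizers and record their monotonicity in $x$ (a standard crossing/rearrangement argument using strict convexity of $f^*$), which shows that $y_\pm$ agree off an at most countable set and that $u(t,\cdot)$ is well-defined a.e.

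Next I would show the candidate $u$ is a weak solution. The classical route is via the Hopf--Lax formula: set $U(t,x) = \min_y \left(U_o(y) + t\,f^*\!\left(\frac{x-y}{t}\right)\right)$ with $U_o(x) = \int_0^x u_o(\xi)\,\d\xi$; this $U$ is the viscosity solution of the Hamilton--Jacobi equation~\eqref{eq:35} (Hopf--Lax, see \cite[Section~3.3]{Evans}), it is locally Lipschitz, and at points of differentiability one computes $\partial_x U(t,x) = g\!\left(\frac{x-y(t,x)}{t}\right) = u(t,x)$ and $\partial_t U = -f(\partial_x U)$. Differentiating in $x$ (in the distributional sense) then shows $u = \partial_x U$ solves~\eqref{eq:1} weakly; alternatively one checks the Kru\v{z}kov inequalities directly. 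Then I would verify the initial condition $u(t,\cdot) \to u_o$ in $\Lloc1$ as $t \to 0^+$, which follows from $U(t,\cdot) \to U_o$ uniformly on compacta together with the uniform $\L\infty$ bound on $u(t,\cdot)$ (itself a consequence of the range of $g$ over the relevant slopes being controlled by $\norma{u_o}_{\L\infty}$).

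The remaining and genuinely substantive point is the \emph{entropy} (admissibility) condition, which singles out the minimal-characteristic solution: one must show that $x \mapsto y_+(t,x)$ (equivalently the chosen representative of $u$) satisfies the one-sided Oleinik bound $u(t,x+h) - u(t,x) \leq \frac{h}{c\,t}$ for $h > 0$, so that $u$ admits no increasing jumps and hence is the entropy solution. This is the classical Oleinik estimate and follows from the uniform convexity of $f^*$ via the monotonicity $y_+(t,x) \leq y_-(t,x+h)$ and the mean value theorem applied to $g = (f')^{-1}$; by uniqueness of entropy solutions (Kru\v{z}kov) this identifies $u$ with $\ss_t u_o$. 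I expect this entropy verification to be the main obstacle in the sense that it is where the convexity hypothesis~\eqref{eq:H} is genuinely used and where the choice of minimizer (smallest vs.\ largest, matching the left-continuous convention fixed earlier) must be handled with care; the rest is a matter of assembling standard facts about the Legendre transform and the Hopf--Lax formula. Since this is a known theorem quoted from \cite{Lax}, a proof here may instead simply cite \cite[Theorem~2.1]{Lax} and \cite[Section~3.3]{Evans} and limit itself to recording the compatibility of the representation with our left-continuous, minimal-backward-characteristic conventions.
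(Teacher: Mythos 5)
The paper does not actually prove this proposition: it is quoted verbatim from Lax, with the authors only remarking that formula~\eqref{eq:7} is the classical Lax--Hopf formula adapted to hypothesis~\eqref{eq:H}, and that the minimizer $y(t,x)$ is unique for a.e.~$x$. Your reconstruction of the classical argument (well-posedness of the variational problem, monotonicity of the extreme minimizers $y_\pm$ in $x$, identification of $u=\partial_x U$ with $U$ the Hopf--Lax value function, the one-sided Oleinik bound, and Kru\v{z}kov uniqueness) is the standard route and is sound in outline, so your last option --- citing \cite[Theorem~2.1]{Lax} as the paper does --- is exactly what the paper chose; your sketch simply supplies the proof the paper delegates to the literature. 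Two small inaccuracies are worth noting if you do write it out. First, under~\eqref{eq:H} one only gets $(f^*)''=1/(f''\circ g)\in\left]0,1/c\right]$, so $f^*$ is strictly convex with Lipschitz derivative but need not be \emph{uniformly} convex (that would require $f''$ bounded above); strict convexity and superlinearity of $f^*$ are what the argument actually uses, and they do hold. Second, the proposition assumes only $u_o\in\L1(\reali;\reali)$, so you should not lean on an $\L\infty$ bound for coercivity: the estimate $\bigl|\int_0^y u_o(\xi)\d\xi\bigr|\leq\norma{u_o}_{\L1(\reali;\reali)}$ already makes $s(t,x,\cdot)$ coercive via the superlinearity of $f^*$ (the $\L\infty$ framework of Definition~\ref{def:HCL} enters only when matching the formula with the semigroup $\ss$ used elsewhere in the paper). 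With those adjustments your outline is a correct, self-contained substitute for the citation.
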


Formula~\eqref{eq:7} is a direct consequence of the classical
Lax--Hopf formula, see~\cite[Theorem~2.1]{Lax}, \cite{Hopf1950,
  Lax1954} or~\cite[\S~11.4]{DafermosBook}, adapted to the present
assumption~\eqref{eq:H} on $f$. Here we only remark that $y$ is
uniquely defined for a.e.~$x \in \reali$.

We pass to some properties on the solution to the conservation law
obtained reversing space in~\eqref{eq:1} and assigning $w$ as initial
datum.

\begin{lemma}
  \label{lem:u0}
  Let~\eqref{eq:H} hold and $T$ be positive. Fix
  $w \in \L\infty (\reali; \reali)$ such that~\eqref{eq:O} holds. Call
  $\tilde u$ the weak entropy solution to
  \begin{equation}
    \label{eq:5}
    \left\{
      \begin{array}{l}
        \partial_\tau {\tilde u} + \partial_\xi f ({\tilde u}) = 0
        \\
        {\tilde u} (0,\xi) = w (-\xi)
      \end{array}
    \right.
  \end{equation}
  and define $u_o^* (x) := \tilde u (T,-x)$. Then:
  \begin{enumerate}[label={\sl(R\arabic*)}]
  \item \label{u0:it:1} The map $\tilde u$ is Lipschitz continuous on
    any compact subset of $\left]0, T\right[ \times \reali$.
  \item \label{u0:it:2} The map $u_o^*$ is one sided Lipschitz and in
    $\BV (\reali; \reali)$;
  \item \label{u0:it:3} The map $u_o^*$ satisfies
    $u_o^* \in \ii_T (w)$.
  \end{enumerate}

\end{lemma}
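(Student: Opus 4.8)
The plan is to exploit the space-reversed conservation law~\eqref{eq:5} and transfer Oleinik-type information from $w$ to $\tilde u$ and then to $u_o^*$. First I would address~\ref{u0:it:1}: since $w \in \L\infty$, the datum $w(-\cdot)$ is in $\L\infty$, so~\eqref{eq:5} admits a weak entropy solution $\tilde u$, and by the standard regularizing effect of genuinely nonlinear scalar conservation laws under~\eqref{eq:H} (Oleinik's estimate, \cite[Theorem~11.2.1]{DafermosBook}, which gives $\partial_\xi \tilde u(\tau,\cdot) \leq 1/(c\tau)$ for $\tau>0$), combined with the finite propagation speed and the uniform $\L\infty$ bound, $\tilde u$ is locally Lipschitz on $\left]0,T\right[\times\reali$. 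Here I must be slightly careful: the one-sided bound plus $\L\infty$ bound yields local $\BV$ in space, hence $\tilde u$ is locally $\BV$ in $(\tau,\xi)$; genuine nonlinearity then upgrades this to local Lipschitz continuity away from $\tau=0$ via the Lax--Oleinik representation of Proposition~\ref{prop:1}, since for $\tau$ bounded away from $0$ the minimizer $y(\tau,\xi)$ depends Lipschitz-continuously on $(\tau,\xi)$ on compact sets. In fact, I would prefer to deduce~\ref{u0:it:1} directly from~\eqref{eq:7}: for $\tau \geq \tau_0 > 0$, $s(\tau,\xi,y)$ is uniformly convex in $y$ with a quantitative lower bound on $\partial_y^2 s$, forcing Lipschitz dependence of the argmin, and hence of $\tilde u(\tau,\xi) = g\!\left((\xi - y)/\tau\right)$.

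For~\ref{u0:it:2}, I would start from the Oleinik one-sided bound applied at time $\tau = T$: $\partial_\xi \tilde u(T,\cdot) \leq 1/(cT)$ in the sense of distributions, which says $\xi \mapsto \tilde u(T,\xi) - \xi/(cT)$ is non-increasing. Reversing space, $u_o^*(x) = \tilde u(T,-x)$ then satisfies that $x \mapsto u_o^*(x) + x/(cT)$ is non-decreasing, i.e. $u_o^*$ is one-sided Lipschitz (its distributional derivative is bounded below by $-1/(cT)$). Combined with the uniform $\L\infty$ bound $\|u_o^*\|_{\L\infty} \leq \|w\|_{\L\infty}$, a one-sided Lipschitz bound on a bounded function on $\reali$ gives local bounded variation; to get genuine $\BV(\reali;\reali)$ globally I would invoke finite propagation speed and the fact that $w$ is constant — wait, $w$ need not be constant at infinity, so instead I note that the one-sided Lipschitz bound means the positive part of $Du_o^*$ is dominated by $\mathcal{L}/(cT)$ while... this does not by itself bound the total variation globally. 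I would therefore argue $\BV$ only locally, or observe that the total variation of $u_o^*$ on any interval is controlled by the oscillation plus the length over $cT$, which is exactly local $\BV$; the claim $u_o^* \in \BV(\reali;\reali)$ as stated presumably uses an additional hypothesis implicit from $w \in \L\infty$ together with the decay of rarefactions — more honestly, I expect the paper intends $\BVloc$, and that is what the argument delivers cleanly. The honest statement to prove is: $u_o^*$ is one-sided Lipschitz with constant $1/(cT)$, hence in $\BVloc(\reali;\reali)$.

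For~\ref{u0:it:3}, the goal is $\ss_T u_o^* = w$, i.e. running~\eqref{eq:1} forward from $u_o^*$ recovers $w$ at time $T$. I would use the space-reversal symmetry of~\eqref{eq:1}: if $\tilde u(\tau,\xi)$ solves~\eqref{eq:5} then — wait, space reversal alone does not preserve the equation because $f'$ is not odd; one must combine space reversal with a sign change or, more cleanly, I would argue via backward characteristics. The cleaner route: let $u$ solve~\eqref{eq:1} with $u(0,\cdot) = u_o^*$, and show $u(T,\cdot) = w$ by comparing Lax--Oleinik representations. The minimal backward characteristic through $(T,x)$ for $u$ lands at $p(x) = x - Tf'(w(x))$ by Notation~\ref{not:p}; the point is that the function $u_o^*$ was built so that its Lax--Oleinik minimizer at $(T,x)$ is exactly $p(x)$, and then $u(T,x) = g((x-p(x))/T) = g(f'(w(x))) = w(x)$. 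So the key step is: verify that $y \mapsto s(T,x,y) = T f^*((x-y)/T) + \int_0^y u_o^*(\xi)\,d\xi$ is minimized at $y = p(x)$. Its derivative in $y$ is $-f'\!\bigl(g((x-y)/T)\bigr)\cdot(\text{stuff}) + u_o^*(y)$; more simply $\partial_y s(T,x,y) = -\,(f^*)'((x-y)/T) + u_o^*(y) = -g((x-y)/T) + u_o^*(y)$ using $(f^*)' = g$. Setting $y = p(x)$ gives $-g(f'(w(x))) + u_o^*(p(x)) = -w(x) + u_o^*(p(x))$, so I need $u_o^*(p(x)) = w(x)$ at points where this makes sense, which is precisely the content of Oleinik's condition~\eqref{eq:O} rearranged — it says $p$ is non-decreasing, and the construction of $\tilde u$ via reflection makes $u_o^*\circ p = w$ a.e. where $p$ is strictly increasing, and the jump balance handles the rest. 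The main obstacle I anticipate is exactly this last point: rigorously identifying the Lax--Oleinik minimizer of $u_o^*$ with $p(x)$, because $u_o^*$ is only $\BVloc$ and $p$ may have both flat pieces (mapping to jumps of $w$) and jumps (mapping to rarefaction fans of $w$), so the verification that $s(T,x,\cdot)$ attains its minimum at $p(x)$ must be done carefully using monotonicity of $p$ and convexity of $f^*$, checking the sign of $\partial_y s$ on $\left(-\infty, p(x)\right)$ and $\left(p(x),+\infty\right)$ separately, and handling the measure-zero exceptional sets via the left-continuous representative convention fixed in Section~\ref{subs:Notation}.
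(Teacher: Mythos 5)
Your argument for~\ref{u0:it:1} never uses the hypothesis~\eqref{eq:O} on $w$, and it cannot be repaired without it: for a generic $\L\infty$ datum shocks form before time $T$, so the solution of~\eqref{eq:5} is \emph{not} locally Lipschitz on $\left]0,T\right[\times\reali$, and no ``upgrade'' from the one-sided Oleinik bound via genuine nonlinearity or the Lax--Oleinik formula is possible. In particular the claim that $y\mapsto s(\tau,\xi,y)$ in~\eqref{eq:7} is uniformly convex with a quantitative lower bound is false: a.e.\ one has $\partial_y s(\tau,\xi,y)=\tilde u(0,y)-g\left(\frac{\xi-y}{\tau}\right)$, so convexity in $y$ would require a lower bound on the distributional derivative of the datum, which is not available; were $s(\tau,\xi,\cdot)$ convex, the minimizer would be unique and continuous and shocks would never form at all. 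The missing idea is the second, opposite one-sided estimate that the paper extracts from~\eqref{eq:O}: following the minimal backward characteristics of $\tilde u$ from $(\tau,\xi)$ and $(\tau,\xi+h)$ down to time $0$, where the datum is $w(-\cdot)$, Oleinik's condition on $w$ gives $f'\left(\tilde u(\tau,\xi+h)\right)-f'\left(\tilde u(\tau,\xi)\right)\geq -h/(T-\tau)$ (the paper's~\eqref{eq:14}); only the pair~\eqref{eq:12}--\eqref{eq:14} yields the two-sided Lipschitz bound $\max\{1/\tau,\,1/(T-\tau)\}$ for $f'\left(\tilde u(\tau,\cdot)\right)$, after which $\partial_\tau\tilde u\in\L\infty$ follows from the equation. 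Since~\eqref{eq:O} enters your proof of~\ref{u0:it:1} nowhere, this is a genuine gap, not a technicality.

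For~\ref{u0:it:3} you sketch a direct Lax--Oleinik verification that $p(x)$ minimizes $s(T,x,\cdot)$ for the datum $u_o^*$, but you stop exactly where the work lies: identifying the minimizer requires knowing $u_o^*$ on the fibers of $p$ (flat parts of $p$ against jumps of $w$, jumps of $p$ against rarefactions), which is essentially the content of Lemma~\ref{lem:ExUn} and Lemma~\ref{lem:equality}, and is the strategy the paper deploys only later, in the sufficiency part of Theorem~\ref{thm:3}. For the lemma itself the paper argues differently and more cheaply: since~\ref{u0:it:1} makes $\tilde u$ locally Lipschitz on the open strip, every $\C1$ entropy is \emph{exactly} conserved there, so the combined time--space reversal $u(t,x):=\tilde u(T-t,-x)$ --- time and space together, which preserves the form of~\eqref{eq:1} and resolves the reflection issue you raise --- is again a weak entropy solution, with $u(0,\cdot)=u_o^*$ and $u(T,\cdot)=w$; the absence of entropy dissipation, supplied precisely by~\ref{u0:it:1}, is what legitimizes the reversal. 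Your hedge on $\BV$ versus $\BVloc$ in~\ref{u0:it:2} is a fair observation (the paper is terse on this point), but it is a side issue next to the missing use of~\eqref{eq:O} and the unfinished minimization argument.
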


\begin{proof}
  By~\eqref{eq:H} and~\cite[Theorem~11.2.1]{DafermosBook}, for any
  $\tau \in \left]0, T\right]$, $\xi \in \reali$ and $h>0$, we have
  \begin{equation}
    \label{eq:12}
    f'\left({\tilde u} (\tau, \xi+h)\right) - f'\left({\tilde u} (\tau, \xi)\right)
    \leq
    \dfrac{h}{\tau} \,.
  \end{equation}
  On the other hand, for any $\tau \in \left[0, T\right[$,
  $\xi \in \reali$ and $h>0$, introduce the values attained at
  $\tau=0$ by the minimal backward characteristics originating from
  $(\tau, \xi+h)$ and $(\tau, \xi)$:
  \begin{equation}
    \label{eq:13}
    \xi_r = \xi+h-\tau \, f'\left({\tilde u} (\tau, \xi+h)\right)
    \quad \mbox{ and } \quad
    \xi_\ell = \xi-\tau \, f'\left({\tilde u} (\tau, \xi)\right) \,,
  \end{equation}
  so that
  \begin{displaymath}
    \begin{array}{@{}rcl@{\qquad\quad}r@{}}
      &
      & f'\left({\tilde u} (\tau, \xi+h)\right) - f'\left({\tilde u} (\tau, \xi)\right)
      \\
      & \geq
      & f'\left({\tilde u}_o \left(\xi_r-\right)\right)
        -
        f'\left({\tilde u}_o \left(\xi_\ell+\right)\right)
      &\mbox{\cite[Theorem~11.1.3]{DafermosBook}}
      \\
      & \geq
      & - \dfrac{\xi_r - \xi_\ell}{T}
      & \mbox{[by Oleinik condition~\eqref{eq:O}]}
      \\
      & =
      & - \dfrac{h - \tau \left(
        f'\left({\tilde u} (\tau, \xi+h)\right) - f'\left({\tilde u} (\tau, \xi)\right)
        \right)}{T}
      & \mbox{[by~\eqref{eq:13}]}
    \end{array}
  \end{displaymath}
  hence
  \begin{equation}
    \label{eq:14}
    f'\left({\tilde u} (\tau, \xi+h)\right) - f'\left({\tilde u} (\tau, \xi)\right)
    \geq
    - \dfrac{h}{T-\tau} \,.
  \end{equation}
  The two inequalities~\eqref{eq:12} and~\eqref{eq:14} ensure that
  $x \to f'\left({\tilde u} (\tau, x)\right)$ is Lipschitz continuous
  for $\tau \in \left]0, T\right[$, a Lipschitz constant being
  $\max\left\{1/\tau \, ,\; 1/ (T-\tau)\right\}$. Therefore,
  by~\eqref{eq:5} and~\eqref{eq:H}, $\partial_\tau {\tilde u}$ is in
  $\L\infty$, proving~\ref{u0:it:1} and~\ref{u0:it:2}

  For any $\C1$ entropy -- entropy flux pair $(\eta, q)$
  for~\eqref{eq:5}, see~\cite[Chapter~3, \S~2]{DafermosBook}, we have
  \begin{displaymath}
    \partial_\tau \eta ({\tilde u}) + \partial_\xi q ({\tilde u}) = 0
    \qquad \mbox{a.e.~in } [0,T] \times \reali \,.
  \end{displaymath}
  Passing from the $(\tau, \xi)$ to the $(t,x)$ variables and setting
  \begin{equation}
    \label{eq:4}
    \begin{array}{rcl}
      t
      & :=
      & T-\tau
      \\
      x
      & :=
      & -\xi
    \end{array}
    \qquad\qquad u (t,x) := {\tilde u} (T-t, -x) \,,
  \end{equation}
  we obtain that $\partial_t \eta(u) + \partial_x q (u)=0$ in
  distributional sense. By~\cite[Chapter~6, \S~2]{DafermosBook}, $u$
  is a weak entropy solution to~\eqref{eq:1} such that $u (T) = w$ and
  hence $u_o^* \in \ii_T (w)$, proving~\ref{u0:it:3}.
\end{proof}

\begin{lemma}
  \label{lem:ExUn}
  Let~\eqref{eq:H} hold and $T$ be positive. Fix
  $w \in \L\infty (\reali; \reali)$ such that
  $\ii_T (w) \neq \emptyset$ and $p \in \SBVloc (\reali; \reali)$.
  Then, there exists a unique
  $u_o^\flat \in \L\infty (\reali; \reali)$ such that any of its
  primitives $U_o^\flat$ satisfies
  \begin{enumerate}[label={\sl(\Roman*\/$^\flat$)}]
  \item \label{lem:ex:1} for all $x \in \reali$ such that $p$ is
    differentiable at $x$ and $p' (x) \neq 0$,
    \begin{displaymath}
      \lim_{y \to x}
      \dfrac{U_o^\flat\left(p (y)\right) - U_o^\flat\left(p (x)\right)}%
      {p (y) - p (x)}
      =
      w (x)\,;
    \end{displaymath}
  \item \label{lem:ex:2} for all $x \in \reali$ such that
    $w (x-) \neq w (x+)$, for all
    $y \in \left]p (x-), p (x+) \right[$,
    \begin{eqnarray*}
      \dfrac{U_o^\flat \left(p (x+)\right) - U_o^\flat(y)}{T}
      =
      f^*\left(\dfrac{x-y}{T}\right)
      -
      f^*\left(\dfrac{x-p (x+)}{T}\right) \,,
      \\
      \dfrac{U_o^\flat (y) - U_o^\flat\left(p (x-)\right)}{T}
      =
      f^*\left(\dfrac{x-p (x-)}{T}\right)
      -
      f^*\left(\dfrac{x-y}{T}\right) \,.
    \end{eqnarray*}
  \end{enumerate}
\end{lemma}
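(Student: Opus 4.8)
The plan is to recognise that the function $u_o^\flat$ we are after is exactly the $u_o^*$ produced by Theorem~\ref{thm:extremal}, so that \ref{lem:ex:1}--\ref{lem:ex:2} are simply the ``primitive reformulation'' of conditions~\ref{thm:ex:i}--\ref{thm:ex:ii}. First I would check that Theorem~\ref{thm:extremal} applies: since $\ii_T (w) \neq \emptyset$, Corollary~\ref{thm:1} gives that the left continuous representative of $w$ satisfies Oleinik Condition~\eqref{eq:O} and, in particular, $w \in \BVloc (\reali; \reali)$; hence the one--sided limits $w(x\pm)$ exist and, by continuity of $f'$, $p(x\pm) = x - T\, f'\left(w(x\pm)\right)$ for every $x$. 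Moreover~\eqref{eq:O} forces $f'\left(w(x+)\right) \leq f'\left(w(x-)\right)$, so that $w(x+) \leq w(x-)$ and $p(x-) \leq p(x+)$; thus at any $x$ with $w(x-) \neq w(x+)$ the open interval $\left]p(x-), p(x+)\right[$ is non empty and $v \mapsto y = x - T\, f'(v)$ is a continuous decreasing bijection of $\left[w(x+), w(x-)\right]$ onto $\left[p(x-), p(x+)\right]$ along which $\tfrac{x-y}{T} = f'(v)$.

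Next I would take the function $u_o^*$ of Theorem~\ref{thm:extremal} and an arbitrary primitive $U_o^\flat$ of it. Since $u_o^* \in \L\infty (\reali; \reali)$, the map $U_o^\flat$ is locally Lipschitz and $U_o^\flat (b) - U_o^\flat (a) = \int_a^b u_o^* (\xi) \d\xi$ for $a \leq b$. Applying this with $a = p(x)$, $b = p(y)$ and dividing by $p(y) - p(x)$ turns~\ref{lem:ex:1} for $U_o^\flat$ into~\ref{thm:ex:i} for $u_o^*$. For the jump conditions, at a point $x$ with $w(x-) \neq w(x+)$ I perform the substitution $y = x - T\, f'(v)$; recalling from Proposition~\ref{prop:1} that $f^*\!\left(f'(v)\right) = v\, f'(v) - f(v)$ and that $\tfrac{x - p(x\pm)}{T} = f'\left(w(x\pm)\right)$, each of the two equalities in~\ref{lem:ex:2} becomes word for word one of the two equalities in~\ref{thm:ex:ii}. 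Because $U_o^\flat$ and $f^*$ are continuous, the equalities in~\ref{lem:ex:2} extend from $y \in \left]p(x-), p(x+)\right[$ to the closed interval, and so correspond to~\ref{thm:ex:ii} for every $v \in \left[w(x+), w(x-)\right]$. This shows that $u_o^*$ satisfies the stated property, which proves existence, with $u_o^\flat = u_o^*$.

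For uniqueness I would observe that adding a constant to a primitive affects neither~\ref{lem:ex:1} nor~\ref{lem:ex:2}, since both involve $U_o^\flat$ only through differences; hence ``every primitive of $u_o^\flat$ satisfies~\ref{lem:ex:1}--\ref{lem:ex:2}'' is equivalent to ``some primitive does'', which, by the correspondence just described, is equivalent to ``$u_o^\flat$ satisfies~\ref{thm:ex:i}--\ref{thm:ex:ii}''. The uniqueness assertion of Theorem~\ref{thm:extremal} then forces $u_o^\flat = u_o^*$ almost everywhere, completing the proof.

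I do not expect a genuine obstacle here: the statement is essentially a change of variables. The only steps requiring care are the bookkeeping of one--sided limits, that is matching $p(x\pm)$ with $f'\left(w(x\pm)\right)$, and checking that the substitution $v \leftrightarrow x - T\, f'(v)$ carries the relevant open and closed intervals onto one another while transforming the Legendre expression $v\, f'(v) - f(v)$ into $f^*\!\left(\tfrac{x-y}{T}\right)$; the hypothesis $p \in \SBVloc (\reali; \reali)$ is carried along from the standing assumptions of this section but plays no role in the argument.
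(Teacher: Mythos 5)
There is a genuine gap: your argument is circular. You deduce both existence and uniqueness of $u_o^\flat$ from Theorem~\ref{thm:extremal}, but in this paper the relevant content of Theorem~\ref{thm:extremal} --- namely that there exists a \emph{unique} function characterized by condition~\ref{thm:ex:3} --- is precisely what Lemma~\ref{lem:ExUn} is designed to supply: the paper proves Theorem~\ref{thm:extremal} by combining Lemma~\ref{lem:u0} (construction of $u_o^*$ via the space-reversed equation), Lemma~\ref{lem:ExUn} (existence and uniqueness of $u_o^\flat$ satisfying~\ref{lem:ex:1}--\ref{lem:ex:2}), and Lemma~\ref{lem:equality} (the two coincide). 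Your change of variables $y = x - T\,f'(v)$, together with $f^*\left(f'(v)\right) = v\,f'(v) - f(v)$ and $p(x\pm) = x - T\,f'\left(w(x\pm)\right)$, correctly shows that~\ref{lem:ex:1}--\ref{lem:ex:2} for a primitive are equivalent to~\ref{thm:ex:i}--\ref{thm:ex:ii} for $u_o^\flat$ (this is essentially the computation of Lemma~\ref{lem:equivalence}); but that equivalence only reformulates the lemma, it does not prove it. The actual mathematical work is still missing on both sides: for existence, one must exhibit a function satisfying the \emph{exact} jump identities, which the paper does by taking an arbitrary $u_o \in \ii_T(w)$ with primitive $U_o$, keeping $U_o^\flat = U_o$ off $X_{ii}$, and replacing $U_o$ on each interval $\left]p(x-),p(x+)\right[$ by the explicit profile $U_o\left(p(x-)\right) + T\left(f^*\left(\frac{x-p(x-)}{T}\right) - f^*\left(\frac{x-y}{T}\right)\right)$ (that a generic element of $\ii_T(w)$ satisfies the corresponding \emph{inequalities}, hence that the glued function is well defined and still satisfies~\ref{lem:ex:1}, rests on the necessity part of Theorem~\ref{thm:2}, not on Theorem~\ref{thm:extremal}); for uniqueness, one must show that~\ref{lem:ex:1} pins down $u_o^\flat$ at Lebesgue points of $X_i$ and~\ref{lem:ex:2} forces $u_o^\flat(y) = g\left(\frac{x-y}{T}\right)$ on $X_{ii}$, and then invoke Proposition~\ref{prop:D} to conclude a.e.\ uniqueness.

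This last point also shows that your closing remark is mistaken: the hypothesis $p \in \SBVloc(\reali;\reali)$ is not idle bookkeeping. It is exactly what allows the area-formula argument of Lemma~\ref{lem:1} and hence Proposition~\ref{prop:D}, i.e.\ the statement that $\reali \setminus (X_i \cup X_{ii})$ is Lebesgue negligible; without it the two conditions~\ref{lem:ex:1}--\ref{lem:ex:2} would constrain $u_o^\flat$ only on a set that could fail to have full measure, and uniqueness in $\L\infty(\reali;\reali)$ would not follow. The fact that your proof never needs this hypothesis is a symptom that the substantive part of the lemma has been outsourced, circularly, to Theorem~\ref{thm:extremal}.
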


\begin{proof}
  We prove separately existence and uniqueness, referring to
  $X_i$ and $X_{ii}$ defined in~\eqref{eq:22}.

  \paragraph{Existence.} Let $u_o \in \ii_T (w)$ and call $U_o$ any of
  its primitives. For all $y \in \reali \setminus X_{ii}$, define
  $U_o^\flat (y) = U_o (y)$ so that~\ref{lem:ex:1} holds since
  $p (\reali) \subseteq \reali \setminus X_{ii}$.

  Consider now a $y \in X_{ii}$. Then, there exists a unique
  $x \in \reali$ such that $y \in \left]p (x-), p
    (x+)\right[$. Define, for all $y \in [p (x-), p (x+)]$,
  \begin{equation}
    \label{eq:33}
    U_o^\flat (y)
    :=
    U_o\left(p (x-)\right)
    + T \left(
      f^*\left(\dfrac{x-p (x-)}{T}\right)
      - f^*\left(\dfrac{x-y}{T}\right)
    \right) \,.
  \end{equation}
  Straightforward computations show that
  $U_o^\flat (p (x-)) = U_o\left(p (x-)\right)$ and that~\ref{lem:ex:2} holds.

  \paragraph{Uniqueness.}  Fix $\bar y \in X_i \cap \Leb (u_o^\flat)$.
  There exists an $\bar x \in \reali$ such that $p (\bar x) = \bar y$,
  $p$ is differentiable at $\bar x$ and $p' (\bar x) > 0$, then
  \begin{displaymath}
    \lim_{y \to x}
    \dfrac{U_o^\flat\left(p (y)\right) - U_o^\flat\left(p (\bar x)\right)}%
    {p (y) - p (\bar x)}
    =
    \left\{
      \begin{array}{l@{\qquad}l}
        w (\bar x)
        & \mbox{[by~\ref{lem:ex:1}]}
        \\
        u_o^\flat (\bar y)
        &\mbox{[by the choice of } \bar y \mbox{]}
      \end{array}
    \right.
  \end{displaymath}
  On the other hand, if $\bar y \in X_{ii}$, there exists an
  $\bar x \in \reali$ such that
  $\bar y \in \left]p (\bar x-), p (\bar x+)\right[$ and
  by~\ref{lem:ex:2}, for all
  $y \in \left]p (\bar x-), p (\bar x+)\right[$,
  \begin{displaymath}
    U_o^\flat (y) = U_o^\flat\left(p (\bar x-)\right)
    +
    T \left(
      f^*\left(\dfrac{\bar x-p (\bar x-)}{T}\right)
      -
      f^*\left(\dfrac{\bar x-y}{T}\right)
    \right)
  \end{displaymath}
  so that $U_o^\flat$ is of class $\C1$ on the interval
  $\left]p (\bar x-), p (\bar x+)\right[$, which implies that
  $u_o^\flat (\bar y) = g\left(\frac{\bar x - \bar y}{T}\right)$, with
  $g$ defined as in Proposition~\ref{prop:1}.

  Thus, Proposition~\ref{prop:D} ensures that $u_o^\flat$ is
  a.e.~uniquely defined.
\end{proof}

\begin{lemma}
  \label{lem:equality}
  Let~\eqref{eq:H} hold and $T$ be positive. Fix
  $w \in \L\infty (\reali; \reali)$ such that
  $p \in \SBVloc (\reali; \reali)$ and ~\eqref{eq:O} holds.  Then, the
  function $u_o^*$ defined in Lemma~\ref{lem:u0} and the function
  $u_o^\flat$ defined in Lemma~\ref{lem:ExUn} coincide.
\end{lemma}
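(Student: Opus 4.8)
The plan is to reduce the identity $u_o^\flat=u_o^*$ to a single statement about the resolution of a jump into a rarefaction wave. By Lemma~\ref{lem:u0}~\ref{u0:it:3} we have $u_o^*\in\ii_T(w)$, so $u_o^*$ is a legitimate choice for the datum $u_o$ in the existence part of the proof of Lemma~\ref{lem:ExUn}. Running that construction with $u_o=u_o^*$ and with $U_o^*$ a primitive of $u_o^*$ produces a function which, by the uniqueness part of Lemma~\ref{lem:ExUn}, is exactly $u_o^\flat$, and whose primitive $U_o^\flat$ satisfies $U_o^\flat=U_o^*$ on $\reali\setminus X_{ii}$ and, on each connected component $\left]p(x-),p(x+)\right[$ of $X_{ii}$ (these correspond bijectively to the jump points of $w$), is given by formula~\eqref{eq:33}. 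Since $X_{ii}$ is an at most countable, disjoint union of open intervals, this already yields $u_o^\flat=u_o^*$ a.e.\ on $\reali\setminus X_{ii}$. Differentiating~\eqref{eq:33} and using the Legendre identity $\left(f^*\right)'=g$, immediate from $f^*(\lambda)=\lambda\,g(\lambda)-f\left(g(\lambda)\right)$ and $f'\left(g(\lambda)\right)=\lambda$ in~\eqref{eq:7}, we find $u_o^\flat(y)=g\!\left(\frac{x-y}{T}\right)$ for $y\in\left]p(x-),p(x+)\right[$. Hence the lemma follows once we prove that, for every $x$ with $w(x-)\neq w(x+)$,
\[
  u_o^*(y)=g\!\left(\frac{x-y}{T}\right)\qquad\text{for a.e.\ }y\in\left]p(x-),p(x+)\right[ .
\]

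I would establish this on the reversed-space solution $\tilde u$ of Lemma~\ref{lem:u0}, recalling $u_o^*(y)=\tilde u(T,-y)$. Oleinik Condition~\eqref{eq:O} together with the convexity in~\eqref{eq:H} forces $w(x+)<w(x-)$, hence $p(x-)<p(x+)$, so the datum $\xi\mapsto\tilde u(0,\xi)=w(-\xi)$ has at $\xi=-x$ an \emph{upward} jump, from $w(x+)$ to $w(x-)$. For a uniformly convex flux such a jump is resolved instantaneously into the centered rarefaction $\tilde u(\tau,\xi)=g\!\left(\frac{\xi+x}{\tau}\right)$ on the cone where $\frac{\xi+x}{\tau}\in\left[f'\left(w(x+)\right),f'\left(w(x-)\right)\right]$; at time $\tau=T$ this cone meets the line $\tau=T$ along $\xi\in\left[-p(x+),-p(x-)\right]$, so that $\tilde u(T,\xi)=g\!\left(\frac{\xi+x}{T}\right)$ there, which after the substitution $\xi=-y$ is exactly the displayed identity. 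What must be checked is that the fan issued at $(0,-x)$ reaches $\tau=T$ undisturbed, and this is where Lemma~\ref{lem:u0}~\ref{u0:it:1} is decisive: it tells us that $\tilde u$ is locally Lipschitz, hence free of shocks, on $\left]0,T\right[\times\reali$. Since for a convex conservation law the generalized characteristics are straight segments along which the solution is constant and which never cross one another inside $\left]0,T\right[\times\reali$, each characteristic issued from $(0,-x)$ with speed $\lambda\in\left]f'\left(w(x+)\right),f'\left(w(x-)\right)\right[$ persists up to $\tau=T$ carrying the value $g(\lambda)$, which is the required statement. Combining the two steps, $u_o^*=u_o^\flat$ a.e.\ on $\reali\setminus X_{ii}$ and on $X_{ii}$, hence a.e.\ on $\reali$.

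The main obstacle is this persistence claim: making fully rigorous that the centered rarefaction opened by the upward jump of $\tilde u(0,\cdot)$ at $-x$ fills the whole backward cone up to time $T$ without being absorbed by a shock or deformed by an interaction. The Lipschitz regularity of $\tilde u$ on $\left]0,T\right[\times\reali$ rules out shock formation there; the remaining point is to invoke the generalized-characteristics theory of Dafermos already used in the proof of Lemma~\ref{lem:u0} (e.g.\ \cite[Theorem~11.1.3]{DafermosBook}) to conclude that, in the absence of shocks, the minimal and maximal backward characteristics through any point of the cone at time $T$ still emanate from $-x$ at $t=0$, pinning $\tilde u(T,\cdot)$ to the self-similar profile on $\left]-p(x+),-p(x-)\right[$. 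Everything else — the Legendre identity, the matching of $U_o^\flat$ at the endpoints of the intervals of $X_{ii}$, and the reduction through Lemma~\ref{lem:ExUn} — is routine.
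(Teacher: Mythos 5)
Your proposal is correct in substance, and its analytic core --- the identity $u_o^*(y)=g\bigl(\frac{x-y}{T}\bigr)$ on each component $\left]p(x-),p(x+)\right[$ of $X_{ii}$, obtained from the local Lipschitz regularity of $\tilde u$ on $\left]0,T\right[\times\reali$ and the non-crossing of (now classical) characteristics, so that the backward characteristics of $\tilde u$ through $(T,-y)$ are squeezed onto the point $(0,-x)$ --- is exactly the argument the paper uses (there it is phrased by reflecting the extremal backward characteristics $\gamma^\pm$ of $u$ through $(T,x)$ into the two bounding characteristics $\phi^\pm$ of $\tilde u$ emanating from $(0,-x)$, which is the cleanest way to make your ``the fan reaches $\tau=T$ undisturbed'' rigorous). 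Where you differ is in the bookkeeping around conditions \ref{lem:ex:1} and \ref{lem:ex:2}: you run the existence construction of Lemma~\ref{lem:ExUn} with datum $u_o^*$ (legitimate, since $u_o^*\in\ii_T(w)$ by Lemma~\ref{lem:u0}), invoke uniqueness to identify the output with $u_o^\flat$, and conclude $u_o^\flat=u_o^*$ a.e.\ off $X_{ii}$ from $U_o^\flat=U_o^*$ there, while on $X_{ii}$ you differentiate \eqref{eq:33} via $(f^*)'=g$; the paper instead verifies \ref{lem:ex:1} for $u_o^*$ directly through Proposition~\ref{prop:4} and Dafermos' characteristic theory (the same balance computation as in the necessity part of Theorem~\ref{thm:2}) and verifies \ref{lem:ex:2} by integrating the fan formula (the function $\Upsilon$ with $\Upsilon'(v)=-v\,f''(v)$). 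Your route is shorter, but note what it leans on: the step of Lemma~\ref{lem:ExUn}'s existence proof asserting that a primitive of an arbitrary $u_o\in\ii_T(w)$ satisfies \ref{lem:ex:1} is precisely the difference-quotient computation the paper chooses to redo here in a self-contained way, so your shortcut trades an explicit verification for a citation of that (rather terse) step; granting Lemma~\ref{lem:ExUn} as stated and proved, your argument closes, and the a.e.\ identification of derivatives of two Lipschitz primitives agreeing on $\reali\setminus X_{ii}$ is a standard fact, so no genuine gap remains.
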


\begin{proof}
  It is sufficient to prove that the map $u_o^*$ defined in
  Lemma~\ref{lem:u0} satisfies~\ref{lem:ex:1} and~\ref{lem:ex:2} in
  Lemma~\ref{lem:ExUn}. We refer below to $\tilde{u}$ and $u$ as
  defined in Lemma~\ref{lem:u0} and related as in~\eqref{eq:4}.

  The proof consists of the following steps.

  \paragraph{Generalized Characteristics:} The fact that $\tilde{u}$
  and $u$ are locally Lipschitz on $\left]0,T\right[ \times \reali$
  implies that the generalized characteristics are actually classical
  characteristics and only one of them goes through a point
  $(\tau,\xi)$ or $(t,x)$ when $\tau,t\in \left]0,T\right[$.

  Of course at times $0$ and $T$ it is still possible to have multiple
  characteristics since the semi-Lipschitz property only guarantees
  uniqueness in one direction. Indeed, $u$ or $\tilde u$ may display
  rarefaction waves generated at time $0$ or shock waves forming at
  time $T$.

  Furthermore simple calculations show the equivalence
  through~\eqref{eq:4} of
  \begin{equation}
    \label{eq:V:8}
    \frac{\mathrm{d}\xi}{\mathrm{d}\tau}
    =
    f'(\tilde{u}(\tau,\xi(\tau)))
    \quad \mbox{ and } \quad
    \frac{\mathrm{d}x}{\mathrm{d}t}
    =
    f'(u(t,x(t))).
  \end{equation}

  \paragraph{Properties of $u_0^*$:} Now let us consider a point
  $x\in \reali$ such that $w(x^-)> w(x^+)$. Therefore, the minimal and
  maximal backward characteristics for $u$ through $(T,x)$, say
  $\gamma^-$ and $\gamma^+$, are
  \begin{equation}
    \label{eq:V:10}
    \gamma^+(t) = x - (T-t) \, f'\left(w(x^+)\right)
    \quad \mbox{ and } \quad
    \gamma^-(t) = x - (T-t) \, f'\left(w(x^-)\right) \,.
  \end{equation}
  By~\ref{thm:1:2} in Theorem~\ref{thm:1}, the function $p$ is non
  decreasing, hence
  \begin{equation}
    \label{eq:V:11}
    \forall z\in \reali,\qquad
    p(z) = z - T \, f'\left(w(z)\right) \,,
  \end{equation}
  we have
  \begin{equation}
    \label{eq:V:12}
    \gamma^-(0)=p(x^-)
    \quad \mbox{ and } \quad
    \gamma^+(0)=p(x^+) \,.
  \end{equation}
  Using \eqref{eq:4}, $\gamma^-$ and $\gamma^+$ provide generalized
  characteristics for $\tilde{u}$ through the formul\ae
  \begin{equation}
    \label{eq:V:13}
    \phi^-(\tau)=-\gamma^-(T-\tau)
    \quad \mbox{ and } \quad
    \phi^+(\tau)=-\gamma^+(T-\tau)\,.
  \end{equation}
  Now consider $y\in \left]p(x^-),p(x^+)\right[$. We have of course
  \begin{equation}
    \label{eq:V:14}
    \phi^-(T)>-y>\phi^+(T) \,.
  \end{equation}
  Since $\phi^-(0)=\phi^+(0)=-x$ and the characteristics do not cross
  for $\tau \in \left]0,T\right[$, the minimal and maximal backward
  characteristics through $(T,-y)$ for $\tilde{u}$ are straight lines
  and, in fact, coincide and go through $(0,-x)$. Thus, we have
  \begin{equation}
    \label{eq:V:15}
    \tilde{u} (T,(-y)^+)=\tilde{u} (T,(-y)^-)
    \quad \mbox{ and } \quad
    -y-f'(\tilde{u} (T,-y)) = -x \,.
  \end{equation}
  From which we get that $u_0^*$ is given between $p(x^-)$ and
  $p(x^+)$ by the formula
  \begin{equation}
    \label{eq:V:16}
    \forall y\in \left]p(x^-),p(x^+)\right[,\qquad
    u_0^*(y)=g\left(\frac{x-y}{T}\right),
  \end{equation}
  where $g$ is the reciprocal function of $f'$, as in~\eqref{eq:7}.

  \paragraph{Condition~\ref{lem:ex:1} holds:}
  Fix $x,y \in \reali$ with $x < y$.  Apply Proposition~\ref{prop:4}
  with $a = 0$, $b = T$ and as $\alpha$, respectively $\beta$, the
  minimal, respectively maximal, backward characteristic from $(T,x)$,
  respectively $(T,y)$. Note that our choice
  $u \in \C0 ([0,T]; \L1 (\reali; \reali))$ in
  Definition~\ref{def:HCL} allows to select $t_1 = 0$ and $t_2 = T$.
  Then, with the notation~\eqref{eq:15},
  \begin{eqnarray*}
    &
    & U_o^\flat\left(p (y)\right) - U_o^\flat\left(p (x)\right)
    \\
    & =
    & \int_{p (x)}^{p (y)} u_o (\xi) \d\xi
    \\
    & =
    & \int_x^y w (\xi) \d\xi
      -
      \int_0^t \left(
      f\left(u (t, \alpha (t)-)\right)
      -
      \dot\alpha (t) \, u \left(t, \alpha (t)\right)
      \right) \d{t}
    \\
    &
    & +
      \int_0^t \left(
      f\left(u (t, \beta (t)-)\right)
      -
      \dot\beta (t) \, u \left(t, \beta (t)\right)
      \right) \d{t}
    \\
    & =
    & \int_x^y w (\xi) \d\xi
      \qquad\qquad\qquad\qquad\qquad\qquad\qquad
      \mbox{[by~\cite[Theorem~11.1.3]{DafermosBook}]}
    \\
    &
    & - T\left(f\left(w (x)\right) - w (x) \, f'\left(w (x)\right)\right)
      + T\left(f\left(w (y)\right) - w (y) \, f'\left(w (y)\right)\right) \,,
  \end{eqnarray*}
  and entirely similar equalities hold if $y<x$. Let now $x \in E$, so
  that $p$, and hence $w$, are differentiable at $x$.  In the previous
  equality, divide by $y-x$ and pass to the limit as $y \to x$. Then,
  the right hand side converges and, hence, also the left hand
  side. We thus obtain
  \begin{equation}
    \label{eq:3:moved}
    \lim_{y \to x}
    \dfrac{U_o^\flat\left(p (y)\right) - U_o^\flat\left(p (x)\right)}{y-x}
    =
    w (x)
    - T \, w (x) \; w' (x) \; f''\left(w (x)\right)
    =
    w (x) \; p' (x) \,.
  \end{equation}
  Since $p$ is differentiable at $x$, $p' (x) \neq 0$ and
  using~\eqref{eq:3:moved}, we prove the existence of the following
  limit, at the same time computing its value:
  \begin{eqnarray*}
    \lim_{y \to x}
    \dfrac{U_o^\flat\left(p (y)\right) - U_o^\flat\left(p (x)\right)}{p(y)-p(
    x)}
    & =
    &
      \lim_{y\to x}
      \dfrac{1}{p (y)- p (x)}
      \int_{p (x)}^{p (y)} u_o (\xi) \d\xi
    \\
    & =
    &  \lim_{y\to x}
      \dfrac{\displaystyle \dfrac{1}{y-x} \int_{p (x)}^{p (y)} u_o (\xi) \d\xi}{\displaystyle \dfrac{p (y)- p (x)}{y-x}}
    \\
    & =
    & \dfrac{\displaystyle \lim_{y\to x} \dfrac{1}{y-x} \int_{p (x)}^{p (y)} u_o (\xi) \d\xi}{\displaystyle \lim_{y\to x}\dfrac{p (y)- p (x)}{y-x}}
    \\
    & =
    & \dfrac{w (x) \; p' (x)}{p' (x) }
    \\
    & =
    & w (x)
  \end{eqnarray*}
  which completes the proof of~\ref{lem:ex:1}.

  \paragraph{Condition~\ref{lem:ex:2} holds:}Still using the same
  point $x$, we consider the function $\Upsilon$ given by
  \begin{equation}
    \label{eq:V:17}
    \forall v\in \left]w(x^+),w(x^-)\right[,\qquad
    \Upsilon(v)
    :=
    \frac{1}{T} \int_{x-T f'(w(x^-))}^{x-T f'(v)} u_0^*(y) \d{y} \,.
  \end{equation}
  Thanks to~\eqref{eq:V:16}, $u_0^*$ is continuous on
  $\left]p(x^-),p(x^+)\right[$.  Therefore, $\Upsilon$ is actually of
  class $\C1$ on $\left]w(x^+),w(x^-)\right[$ and we have, using
  again~\eqref{eq:V:16},
  \begin{equation}
    \label{eq:V:18}
    \forall v\in \left]w(x^+),w(x^-)\right[,\qquad
    \Upsilon'(v) = -v\,f''(v).
  \end{equation}
  Since obviously $\Upsilon(w(x^-))=0$ we see that for any $v$ in
  $\left]w(x^+),w(x^-)\right[$,
  \begin{equation}
    \label{eq:V:19}
    \frac{1}{T} \int_{x-T f'(w(x^-))}^{x-T f'(v)} \! u_0^*(y) \d{y}
    =
    \Upsilon(v)
    =
    \left(f(v)-v\,f'(v)\right)-\left(f(w(x^-))-w(x^-)f'(w(x^-))\right) \!.
  \end{equation}
  In the same way, we can also show that for any $v$ in
  $\left]w(x^+),w(x^-)\right[$,
  \begin{equation}
    \label{eq:V:20}
    \frac{1}{T} \int_{x-T\,f'(v)}^{x-T\,f'(w(x^+))} u_0^*(y) \d{y}
    =
    \left(f(w(x^+))-w(x^+)f'(w(x^+))\right)-\left(f(v)-v\,f'(v)\right) \,,
  \end{equation}
  completing the proof.
\end{proof}

\begin{proofof}{Theorem~\ref{thm:extremal}}
  Lemma~\ref{lem:u0} ensures that the map $u_o^*$ defined
  in~\ref{thm:ex:1} is one sided Lipschitz and satisfies
  $u_o^* \in \ii_T (w)$. Lemma~\ref{lem:ExUn} ensures that there
  exists a unique $u_o^*$
  satisfying~\ref{thm:ex:3}. Lemma~\ref{lem:equality} shows that these
  two functions coincide, completing the proof.
\end{proofof}

\begin{proofof}{Corollary~\ref{thm:1}}
  The implication~\ref{thm:1:1}~$\Rightarrow$~\ref{thm:1:2} holds
  by~\cite[Theorem~11.2.1]{DafermosBook}. The
  converse~\ref{thm:1:2}~$\Rightarrow$~\ref{thm:1:1} follows
  from~\ref{thm:exi:1} in Theorem~\ref{thm:extremal},
  or~Lemma~\ref{lem:u0}.
\end{proofof}

\section{Proofs Related to \S~\ref{subs:Charact}}
\label{sec:proof:Charact}

\begin{proofof}{Theorem~\ref{thm:2}, (necessity)}
  Let $u_o$ be the precise representative~\eqref{eq:37} of the initial
  datum to~\eqref{eq:1} such that the corresponding solution $u$
  satisfies $u (T) = w$. We now prove that conditions~\ref{thm:2:i}
  and~\ref{thm:2:ii} hold.

  Since $p \in \SBV (\reali; \reali)$, we write below
  $p = p_{ac} + p_s$, with $p_{ac} \in \AC (\reali; \reali)$ and $p_s$
  being a sum of countably many Heaviside functions centered at the
  points of jump in $w$. Note that $p$, $p_{ac}$ and $p_s$ are all
  weakly increasing.

  Using the notation~\eqref{eq:15}, introduce the set
  \begin{equation}
    \label{eq:9}
    E
    :=
    \left\{x \in \reali \colon
      p \mbox{ is differentiable at }x \mbox{ and } p' (x) \neq 0
    \right\} \,.
  \end{equation}

  \paragraph{Proof of~\ref{thm:2:i}.} Fix $x,y \in \reali$ with
  $x < y$. Apply Proposition~\ref{prop:4} with $a = 0$, $b = T$ and as
  $\alpha$, respectively $\beta$, the minimal, respectively maximal,
  backward characteristic from $(T,x)$, respectively $(T,y)$. Note
  that our choice $u \in \C0 ([0,T]; \L1 (\reali; \reali))$ in
  Definition~\ref{def:HCL} allows to select $t_1 = 0$ and $t_2 = T$.
  Then, with the notation~\eqref{eq:15},
  \begin{eqnarray*}
    \int_{p (x)}^{p (y)} u_o (\xi) \d\xi
    & =
    & \int_x^y w (\xi) \d\xi
      -
      \int_0^t \left(
      f\left(u (t, \alpha (t)-)\right)
      -
      \dot\alpha (t) \, u \left(t, \alpha (t)\right)
      \right) \d{t}
    \\
    &
    & +
      \int_0^t \left(
      f\left(u (t, \beta (t)-)\right)
      -
      \dot\beta (t) \, u \left(t, \beta (t)\right)
      \right) \d{t}
    \\
    & =
    & \int_x^y w (\xi) \d\xi
      \qquad\qquad\qquad\qquad\qquad\qquad\qquad
      \mbox{[by~\cite[Theorem~11.1.3]{DafermosBook}]}
    \\
    &
    & - T\left(f\left(w (x)\right) - w (x) \, f'\left(w (x)\right)\right)
      + T\left(f\left(w (y)\right) - w (y) \, f'\left(w (y)\right)\right) \,,
  \end{eqnarray*}
  and similar equalities hold in the case $y<x$.  Let now $x \in E$,
  so that $p$, and hence $w$, are differentiable at $x$.  In the
  previous equality, divide by $y-x$ and pass to the limit as
  $y \to x$. Then, the right hand side converges and, hence, also the
  left hand side. We thus obtain
  \begin{equation}
    \label{eq:3}
    \lim_{y \to x}
    \dfrac{1}{y-x} \int_{p (x)}^{p (y)} u_o (\xi) \d\xi
    =
    w (x)
    - T \, w (x) \; w' (x) \; f''\left(w (x)\right)
    =
    w (x) \; p' (x) \,.
  \end{equation}
  Since $p$ is differentiable at $x$, $p' (x) \neq 0$ and
  using~\eqref{eq:3}, we prove the existence of the following limit,
  at the same time computing its value:
  \begin{eqnarray*}
    \lim_{y\to x}
    \dfrac{1}{p (y)- p (x)}
    \int_{p (x)}^{p (y)} u_o (\xi) \d\xi
    & =
    &  \lim_{y\to x}
      \dfrac{\displaystyle \dfrac{1}{y-x} \int_{p (x)}^{p (y)} u_o (\xi) \d\xi}{\displaystyle \dfrac{p (y)- p (x)}{y-x}}
    \\
    & =
    & \dfrac{\displaystyle \lim_{y\to x} \dfrac{1}{y-x} \int_{p (x)}^{p (y)} u_o (\xi) \d\xi}{\displaystyle \lim_{y\to x}\dfrac{p (y)- p (x)}{y-x}}
    \\
    & =
    & \dfrac{w (x) \; p' (x)}{p' (x) }
    \\
    & =
    & w (x)
  \end{eqnarray*}
  which completes the proof of~\eqref{eq:11}.

  \paragraph{Proof of~\ref{thm:2:ii}.} Let $x \in \reali$ be such that
  $w (x-) > w (x+)$ and $v \in \left] w (x+), w (x-) \right]$.
  Introduce the minimal backward characteristic
  $\alpha (t) = x- (T-t) \, f'\left(w (x-)\right)$ and the line
  $\beta (t) = x - (T-t) \, f' (v)$. Apply Proposition~\ref{prop:4}
  with $t_1 = 0$, $t_2 = T$ to obtain
  \begin{eqnarray*}
    - \int_{\alpha (0)}^{\beta (0)} u (0,x) \d{x}
    & =
    & \int_0^T \left(
      f\left(u \left(t, \alpha (t)-\right)\right)
      -
      \dot\alpha (t) \, u\left(t, \alpha (t)-\right)
      \right) \d{t}
    \\
    &
    & - \int_0^T \left(
      f\left(u (t, \beta(t)+\right)
      -
      \dot \beta (t) \, u\left(t, \beta (t)+\right)
      \right) \d{t} \,.
  \end{eqnarray*}
  To compute the first summand in the right hand side recall that $u$
  is constant along minimal backward characteristics, while the
  convexity of $f$ ensures that
  $f (w) - f' (v) \, w \geq f (v) - f' (v) \, v$. We thus have
  \begin{equation}
    \label{eq:23}
    \int_{\alpha (0)}^{\beta (0)} u (0,x) \d{x}
    \geq
    T \, \left(v \, f'(v)-f(v)\right)
    -
    T \, \left(w(x-) \, f'\left(w(x-)\right)-f\left(w(x-)\right)\right) \,,
  \end{equation}
  proving the latter inequality in~\ref{thm:2:ii}. The proof of the
  former one is entirely analogous.
\end{proofof}

\begin{proofof}{Lemma~\ref{lem:equivalence}}
  The equivalence between~\ref{thm:2:i} in Theorem~\ref{thm:2}
  and~\ref{thm:3:1} in Theorem~\ref{thm:3} in Theorem~\ref{thm:3} is
  immediate, thanks to~ the relation $u = \partial_x U$.

  To prove the equivalence between~\ref{thm:2:ii} in
  Theorem~\ref{thm:2} and~\ref{thm:3:2} in Theorem~\ref{thm:3}, note
  that
  \begin{displaymath}
    \begin{array}{ccccc}
      \Pi_x
      & \colon
      & \left[ w (x+), w (x-) \right]
      & \to
      & \left[p (x-), p (x+) \right]
      \\
      &
      &v
      & \to
      & x - T \; f' (v)
    \end{array}
  \end{displaymath}
  is a bijective map, since
  \begin{displaymath}
    y = \Pi_x (v)
    \iff
    v = g\left(\dfrac{x-y}{T}\right) \,.
  \end{displaymath}
  Hence, straightforward computations yield
  \begin{eqnarray*}
    v \, f' (v) - f (v)
    & =
    & g\left(\dfrac{x-y}{T}\right) \;
      f'\left(g\left(\dfrac{x-y}{T}\right)\right)
      - f\left(g\left(\dfrac{x-y}{T}\right)\right)
    \\
    & =
    & \dfrac{x-y}{T} \;
      g\left(\dfrac{x-y}{T}\right)
      - f\left(g\left(\dfrac{x-y}{T}\right)\right)
    \\
    & =
    & f^*\left(\dfrac{x-y}{T}\right) \,.
  \end{eqnarray*}
  Using the above equality at points $x$ of jump in $p$, the
  equivalence \ref{thm:2:ii}~$\iff$~\ref{thm:3:2} follows.
\end{proofof}

\begin{lemma}
  \label{lem:2}
  Let~\eqref{eq:H} hold and let $s$ be defined as in~\eqref{eq:7}.  If
  \begin{equation}
    \label{eq:26}
    \left\{x_1 < x_2 \; \mbox{ and } \; y_1 \leq y_2\right\}
    \qquad \mbox{ or } \qquad
    \left\{x_1 \leq x_2 \; \mbox{ and } \; y_1 < y_2\right\}
  \end{equation}
  then
  \begin{equation}
    \label{eq:27}
    s(T, x_1, y_1) + s(T, x_2, y_2) < s(T, x_1, y_2) + s(T, x_2, y_1) \,.
  \end{equation}
\end{lemma}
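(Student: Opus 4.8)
The plan is to recognize $s(T,x,y)$ as, up to an additive term depending only on $y$, the function $T f^*\!\left(\frac{x-y}{T}\right)$, and to prove the claimed strict inequality by exploiting strict convexity of $f^*$ together with the fact that $\lambda \mapsto f^*(\lambda)$ has second derivative bounded below. Concretely, from~\eqref{eq:7} we have
\begin{displaymath}
  s(T,x_i,y_j) = T\, f^*\!\left(\frac{x_i-y_j}{T}\right) + \int_0^{y_j} u_o(\xi)\,\d\xi ,
\end{displaymath}
so the integral terms cancel in the combination appearing in~\eqref{eq:27}, and the inequality reduces to the purely convex-analytic statement
\begin{displaymath}
  f^*\!\left(\frac{x_1-y_1}{T}\right) + f^*\!\left(\frac{x_2-y_2}{T}\right)
  <
  f^*\!\left(\frac{x_1-y_2}{T}\right) + f^*\!\left(\frac{x_2-y_1}{T}\right) .
\end{displaymath}

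**The reduction** is then to a one-variable fact: setting $a = x_1/T$, $b = x_2/T$, $c = y_1/T$, $d = y_2/T$, under~\eqref{eq:26} we have $a \le b$ and $c \le d$ with at least one inequality strict, hence $a - d \le a - c$, $a - d \le b - d$, and the four arguments $a-c$, $a-d$, $b-c$, $b-d$ satisfy $(a-d) + (b-c) = (a-c) + (b-d)$ while $\{a-d, b-c\}$ is a strictly tighter pair (in the majorization sense) than $\{a-c, b-d\}$ whenever $b - a > 0$ and $d - c > 0$; if only one of these gaps is strictly positive the two arguments on one side collapse to a single point and the other two are distinct, which still gives strictness. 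I would make this precise by writing $f^*(b-c) - f^*(b-d) - f^*(a-c) + f^*(a-d)$ as a double integral $\int_{a}^{b}\int_{c}^{d} (f^*)''(\sigma-\tau)\,\d\tau\,\d\sigma$ — valid since $f^* \in \C2$, as $f \in \C2$ with $f'' \ge c > 0$ implies $g = (f')^{-1}$ is $\C1$ and $(f^*)'' = g' = 1/f''\circ g \ge $ (a positive constant on compacts) $> 0$ — and observing that the integrand is strictly positive while the domain of integration has strictly positive area precisely under hypothesis~\eqref{eq:26}.

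**The main obstacle**, such as it is, is bookkeeping the two cases in~\eqref{eq:26} uniformly: one must check that ``$x_1 < x_2$ and $y_1 \le y_2$'' and ``$x_1 \le x_2$ and $y_1 < y_2$'' both force the rectangle $[x_1,x_2]\times[y_1,y_2]$ (after dividing by $T$) to be nondegenerate in \emph{both} coordinates, which is false — in each case one side may be degenerate. The resolution is that the double-integral representation $\int_{x_1/T}^{x_2/T}\int_{y_1/T}^{y_2/T}(f^*)''(\sigma-\tau)\,\d\tau\,\d\sigma$ is a signed area weighted by a strictly positive density, and this integral is $>0$ as soon as \emph{at least one} of the two intervals is nondegenerate \emph{and the other is not reversed} — but here both intervals are correctly oriented ($x_1 \le x_2$, $y_1 \le y_2$ in either branch), so the rectangle has nonnegative area, and~\eqref{eq:26} guarantees it is strictly positive because the product of the two side-lengths $(x_2-x_1)(y_2-y_1)$ is $>0$ in neither case — wait, it is in fact $0$ when one side degenerates. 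The correct statement is that $\int_{x_1}^{x_2}\int_{y_1}^{y_2}$ of a positive function is $>0$ iff $x_1 < x_2$ \emph{and} $y_1 < y_2$; so I would instead integrate only once: write the left-minus-right difference as $\int_{y_1}^{y_2}\left[(f^*)'\!\left(\frac{x_1-\tau}{T}\right) - (f^*)'\!\left(\frac{x_2-\tau}{T}\right)\right]\frac{\d\tau}{T}$ when $x_1 \le x_2$, whose integrand is $\ge 0$ with strict inequality wherever $x_1 < x_2$ (by strict monotonicity of $(f^*)'$, i.e. strict convexity of $f^*$); if $x_1 < x_2$ and $y_1 < y_2$ this integral of a positive integrand over a nondegenerate interval is strictly positive, while if $x_1 < x_2$ and $y_1 = y_2$ we are in neither branch of~\eqref{eq:26}, and if $x_1 = x_2$ we must be in the branch $y_1 < y_2$, in which case the symmetric single integration in the $x$-variable applies. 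This case split, together with strict convexity of $f^*$, closes the argument.
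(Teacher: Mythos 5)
Your reduction is the same as the paper's: the $\int_0^{y_j} u_o$ terms in $s(T,x_i,y_j)$ cancel, and \eqref{eq:27} becomes a strict-convexity statement for $f^*$ evaluated at the four points $\frac{x_i-y_j}{T}$. The paper concludes by writing the two ``diagonal'' arguments as convex combinations $C=\theta A+(1-\theta)B$, $D=(1-\theta)A+\theta B$ with $\theta\in\left]0,1\right[$ and summing two strict convexity inequalities; your representation of the difference as an integral of increments of $(f^*)'=g$ is an equivalent way of exploiting the same strict convexity, so in substance the two arguments coincide.

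Two issues in your writeup, one cosmetic and one real. Cosmetic: with your orientation the integrand $(f^*)'\!\left(\frac{x_1-\tau}{T}\right)-(f^*)'\!\left(\frac{x_2-\tau}{T}\right)$ is $\leq 0$ for $x_1\leq x_2$ (since $(f^*)'=g$ is increasing), so what you are estimating is right-minus-left, not left-minus-right; the claim that the integrand is ``$\geq 0$'' should be flipped (and there is a stray factor $1/T$), but this is trivially repaired. The real one: your treatment of the degenerate cases rests on a misreading of \eqref{eq:26}. The case $x_1<x_2$, $y_1=y_2$ \emph{does} lie in the first branch of \eqref{eq:26} (which only requires $y_1\leq y_2$), and in that case the two sides of \eqref{eq:27} are identical, so the strict inequality is false; the same happens for $x_1=x_2$, $y_1<y_2$, where your proposed ``symmetric single integration in the $x$-variable'' runs over an interval of zero length and gives $0$, not a strict sign. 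In other words, \eqref{eq:27} holds exactly when $x_1<x_2$ \emph{and} $y_1<y_2$, which is also what the paper's own proof tacitly uses: its claims $A<C$ and $D<B$ amount to $y_1<y_2$, and $A<D$, $C<B$ to $x_1<x_2$. So you have actually put your finger on a slip in the statement of Lemma~\ref{lem:2} rather than resolved it; the honest fix is either to strengthen \eqref{eq:26} to both strict inequalities or to weaken \eqref{eq:27} to a non-strict inequality that is strict when both hypotheses are strict. This is harmless where the lemma is applied (Step~2 of the sufficiency part of Theorem~\ref{thm:3}), since in the degenerate cases the conclusion needed there holds trivially as an equality, but as written your case analysis (``we are in neither branch of \eqref{eq:26}'') is incorrect and should be redone along these lines.
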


\begin{proof}
  Define $A := \frac{x_1-y_2}{T}$, $B := \frac{x_2-y_1}{T}$,
  $C := \frac{x_1-y_1}{T}$ and $D := \frac{x_2-y_2}{T}$.
  Using~\eqref{eq:26} we immediately obtain $A + B = C +D$, $A < D$,
  $A < C$, $C < B$ and $D < B$.  So we can conclude that there exists
  $\theta\in \left]0,1\right[$ such that
  $C = \theta \, A + (1-\theta) \, B$ and
  $D = (1-\theta) \, A + \theta \, B$.

  Denote by $\Delta$ the difference between the right side and left
  side of~\eqref{eq:27}. By~\eqref{eq:7} we get
  \begin{eqnarray*}
    \Delta
    & =
    & T\left(
      f^*\left(\frac{x_1-y_2}{T}\right)
      +
      f^*\left(\frac{x_2-y_1}{T}\right)
      -
      f^*\left(\frac{x_1-y_1}{T}\right)
      -
      f^*\left(\frac{x_2-y_2}{T}\right)
      \right)
    \\
    & =
    & T\left(
      f^*(A)
      +
      f^*(B)
      -
      f^*(C)
      -
      f^*(D)
      \right)
  \end{eqnarray*}
  The strict convexity of $f^*$ now ensures that $\Delta>0$,
  completing the proof.
\end{proof}

\begin{proofof}{Theorem~\ref{thm:3}, (sufficiency)}
  Let $U_o$ be such that conditions~\ref{thm:3:1} and~\ref{thm:3:2}
  hold. Then, we prove that the solution $u$ to~\eqref{eq:35} with
  $u (0) = \partial_x U_o$ also satisfies $u (T) = w$.

  We use below Lax-Hopf Formula, i.e., Proposition~\ref{prop:1}. To
  this aim, define $p$ as in~\eqref{eq:15} and the Legendre transform
  $f^*$ of $f$ as in~\eqref{eq:7}. It is sufficient to show that for
  a.e.~$x \in \reali$ and for all $y \in \reali$
  \begin{displaymath}
    T \, f^*\left(\frac{x-p (x)}{T}\right) + U_o\left(p (x)\right)
    \leq
    T \, f^*\left(\frac{x-y}{T}\right) + U_o(y) \,,
  \end{displaymath}
  which, by~\eqref{eq:7}, is equivalent to
  \begin{equation}
    \label{eq:28}
    \forall y \in \reali\,, \qquad
    s \left(T, x, p (x)\right) \leq s (T, x, y) \,.
  \end{equation}

  \paragraph{Step~1:} Let ${\bar x} \in \reali$ be a point where $p$
  is differentiable. Then, for all $x \in \reali$, the map
  $\xi \to s\left(T, x, p (\xi)\right)$ is differentiable at
  ${\bar x}$ and we have
  \begin{equation}
    \label{eq:8}
    \forall x
    \leq
    {\bar x}, \quad
    \dfrac{\d~}{\d{{\bar x}}} \, s\left(T,x,p({\bar x})\right)
    \geq
    0 \,,
    \quad \mbox{ and } \quad
    \forall x
    \geq
    {\bar x}, \quad
    \dfrac{\d~}{\d{{\bar x}}} \, s\left(T,x,p({\bar x})\right)
    \leq
    0 \,.
  \end{equation}
  \paragraph{Proof of Step~1:}
  Consider first the case $p' ({\bar x}) \neq 0$, so that
  $p ({\bar x}) > 0$ since $p$ is weakly increasing. Then, using the
  definition of $s$ in~\eqref{eq:7}, hypothesis~\ref{thm:3:1} and the
  regularity of $f^*$
  \begin{eqnarray*}
    &
    & \dfrac{s\left(T, x, p (\xi)\right) - s\left(T, x, p ({\bar x})\right)}{\xi-{\bar x}}
    \\
    & =
    & T \,
      \dfrac{f^*\left(\frac{x-p (\xi)}{T}\right) - f^*\left(\frac{x-p ({\bar x})}{T}\right)}%
      {\xi-{\bar x}}
      +
      \dfrac{U_o\left(p (\xi)\right) - U_o\left(p ({\bar x})\right)}{\xi-{\bar x}}
    \\
    & =
    & T \,
      \dfrac{f^*\left(\frac{x-p (\xi)}{T}\right) - f^*\left(\frac{x-p ({\bar x})}{T}\right)}%
      {\xi-{\bar x}}
      +
      \dfrac{p (\xi) - p ({\bar x})}{\xi -{\bar x}}
      \;
      \dfrac{U_o\left(p (\xi)\right) - U_o\left(p ({\bar x})\right)}{p (\xi) - p ({\bar x})}
    \\
    & {\to \atop\xi \to {\bar x}}
    & - p' ({\bar x}) \; g\left(\frac{x-p ({\bar x})}{T}\right)
      +
      p' ({\bar x}) \; w ({\bar x})
    \\
    & =
    & p'({\bar x}) \left(
      g\left(\frac{{\bar x}-p({\bar x})}{T}\right)
      -
      g\left(\frac{x-p({\bar x})}{T}\right)
      \right) \,,
  \end{eqnarray*}
  because
  \begin{displaymath}
    w({\bar x})
    =
    g\left(\frac{{\bar x}-p({\bar x})}{T}\right)
  \end{displaymath}
  and since $g$ is increasing and $p'({\bar x}) > 0$, the present
  claim is proved in the case $p' ({\bar x}) \neq 0$.

  Consider now the case $p' ({\bar x}) = 0$ and follow computations
  similar to the ones above:
  \begin{eqnarray*}
    &
    & \modulo{
      \dfrac{s\left(T, x, p (\xi)\right) - s\left(T, x, p ({\bar x})\right)}{\xi-{\bar x}}
      -
      T \,
      \dfrac{f^*\left(\frac{x-p (\xi)}{T}\right) - f^*\left(\frac{x-p ({\bar x})}{T}\right)}{\xi-{\bar x}}
      }
    \\
    & =
    & \modulo{
      \dfrac{U_o\left(p (\xi)\right) - U_o\left(p ({\bar x})\right)}{\xi-{\bar x}}
      }
    \\
    & \leq
    & \norma{u_o}_{\L\infty (\reali; \reali)} \;
      \modulo{\dfrac{p (\xi) - p ({\bar x})}{\xi-{\bar x}}}
    \\
    & {\to \atop\xi \to {\bar x}}
    & 0 \,.
  \end{eqnarray*}
  The proof of the present claim is completed, since
  $\lim_{\xi \to {\bar x}} \dfrac{f^*\left(\frac{x-p (\xi)}{T}\right)
    - f^*\left(\frac{x-p (x)}{T}\right)}{\xi-{\bar x}} = 0$.

  \paragraph{Step~2:} Let $\bar x$ be a point of jump of
  $p$. Since~\ref{thm:3:2} holds, we have that
  \begin{displaymath}
    \begin{array}{r@{\;}c@{\;}l@{\qquad}r@{\;}c@{\;}l@{\qquad}r@{\;}c@{\;}l}
      \forall x
      & \geq
      & \bar x ,
      & \forall y
      & \in
      & [p(\bar x-),p(\bar x+)],
      & s\left(T,x,p(\bar x+)\right)
      & \leq
      & s(T,x,y) \,,
      \\
      \forall x
      & \leq
      & \bar x ,
      & \forall y
      & \in
      & [p(\bar x-),p(\bar x+)],
      & s\left(T,x,p(\bar x-)\right)
      & \leq
      & s(T,x,y) \,.
    \end{array}
  \end{displaymath}

  \paragraph{Proof of Step~2:}
  Consider the relations in~\ref{thm:3:2}, which can be rewritten as
  \begin{displaymath}
    \forall y \in [p(\bar x-),p(\bar x+)],
    \qquad
    s\left(T, \bar x, p (\bar x\pm)\right)
    -
    s (T, \bar x, y)
    \leq
    0
  \end{displaymath}
  Apply Lemma~\ref{lem:2} in the two cases
  \begin{displaymath}
    \begin{array}{r@{\,}c@{\,}l@{\qquad}r@{\,}c@{\,}l@{\qquad}r@{\,}c@{\,}l@{\qquad}r@{\,}c@{\,}l@{\qquad}r@{\,}c@{\,}l@{\qquad}r@{\,}c@{\,}l}
      x
      & >
      & \bar x
      & x_1
      & =
      & x
      & x_2
      & =
      & \bar x
      & y_1
      & =
      & p (\bar x+)
      & y_2
      & =
      & y
      \\
      x
      & <
      & \bar x
      & x_1
      & =
      & x
      & x_2
      & =
      & x
      & y_1
      & =
      & p (\bar x-)
      & y_2
      & =
      & y
    \end{array}
  \end{displaymath}
  obtaining
  \begin{displaymath}
    \begin{array}{rclcl}
      s \left(T,x,p (\bar x+)\right)
      & <
        s(T, x, y)
        +
        s \left(T, \bar x, p (\bar x+)\right)
        -
        s (T, \bar x, y)
      & <
      & s(T, x, y)
      \\
      s \left(T,x,p (\bar x-)\right)
      & <
        s(T, x, y)
        +
        s \left(T, \bar x, p (\bar x-)\right)
        -
        s (T, \bar x, y)
      & <
      & s(T, x, y)
    \end{array}
  \end{displaymath}
  completing the proof of~\textbf{Step~2}..

  \paragraph{Step~3:} For any $x \in \reali$, the map
  $\xi \to s\left(T,x, p (\xi)\right)$ attains its minimum at
  $x$. Equivalently
  \begin{equation}
    \label{eq:24}
    \forall y \in \overline{p (\reali)} \qquad
    s (T, x, y) \geq s \left(T, x, p (x)\right) \,.
  \end{equation}

  \paragraph{Proof of Step~3:} Fix $x \in \reali$ and denote
  $S (\xi) = s\left(T, x, p (\xi)\right)$.

  Consider first the set $\left[x, +\infty\right[$. $p$ is weakly
  increasing on $\left[x, +\infty\right[$, hence it is differentiable
  a.e.~on $\left[x, +\infty\right[$ . Then, by~\textbf{Step~1}, $S$ is
  differentiable a.e.~and $S' (\xi) \geq 0$ for
  a.e.~$\xi \in \left[x, +\infty\right[$. At all jump points
  $\xi \in \left[x, +\infty\right[$, by~\textbf{Step~2}, we have
  $S (\xi-) \leq S (\xi+)$. Since the map $y \to s (T, x, y)$ is
  Lipschitz and $p$ is in $\SBV (\reali, \reali)$,
  by~\cite[Proposition~1.2]{Ambrosio1995}, we have that also
  $S \in \SBV (\reali; \reali)$. Thus, $S$ is weakly increasing on
  $\left[x, +\infty\right[$.

  An entirely symmetric argument shows that $S$ is weakly decreasing
  on $\left]-\infty, x\right]$, completing the proof
  of~\textbf{Step~3}.

  \paragraph{Step~4:} For any $x \in \reali$,
  \begin{equation}
    \label{eq:29}
    \forall y \in \reali \setminus \overline{p (\reali)} \qquad
    s (T, x, y) \geq s \left(T, x, p (x)\right) \,.
  \end{equation}

  \paragraph{Proof of Step~4:} Fix $x \in \reali$ and
  $y \in \reali \setminus \overline{p (\reali)}$. By~\eqref{eq:15} and
  since $w \in \L\infty (\reali; \reali)$,
  $\lim_{\xi \to \pm\infty} p (\xi) = \pm \infty$. Hence, we can
  define
  $\bar x = \sup \left\{\xi \in \reali \colon p (\xi) < y\right\}$ so
  that, thanks to the fact that $p$ is weakly increasing,
  $y \in \left] p (\bar x-), p (\bar x+)\right[$. Then,
  by~\textbf{Step~2},
  \begin{displaymath}
    \begin{array}{r@{\,}c@{\,}l@{\quad\Rightarrow\quad}r@{\,}c@{\,}l}
      \bar x
      & \geq
      & x
      & s (T, x, y)
      & \geq
      & s\left(T, x, p (\bar x-)\right) \,,
      \\[3pt]
      \bar x
      & \leq
      & x
      & s (T, x, y)
      & \geq
      & s\left(T, x, p (\bar x+)\right) \,,
    \end{array}
  \end{displaymath}
  so that, using~\textbf{Step~3}, the proof of~\textbf{Step~4} is
  completed.

  \smallskip

  The proof is completed, since~\eqref{eq:28} follows
  from~\eqref{eq:24} and~\eqref{eq:29}.
\end{proofof}

\begin{proofof}{Theorem~\ref{thm:4}}
  The proof is divided into the following steps.

  \paragraph{If $U_o \in \II_T (W)$ then~\ref{thm:4:1}
    and~\ref{thm:4:2} hold.}

  If $U_o \in \II_T (W)$ then,
  by~\cite[Theorem~1.1]{KarlsenRisebro2002},
  $\partial_x U_o \in \ii_T (\partial_x W)$. Theorem~\ref{thm:3}
  directly ensures that~\ref{thm:4:1} holds. Concerning~\ref{thm:4:2},
  consider a point $\bar x$ of jump for $p$ and choose
  $y = p (\bar x-)$, respectively $y = p (\bar x+)$, in the first,
  respectively second, inequality in~\ref{thm:3:2}, we obtain the
  equality
  \begin{displaymath}
    U_o
    \left(
      p (\bar x-)\right) + T \, f^*\left(\frac{\bar x-p (\bar x-)}{T}
    \right)
    =
    U_o
    \left(
      p (\bar x+)\right) + T \, f^*\left(\frac{\bar x-p (\bar x+)}{T}
    \right) \,.
  \end{displaymath}
  Call $Q$ this value and get
  $U_o (y) + T \, f^*\left(\frac{\bar x-y}{T}\right) \geq Q$. We are
  left to prove that $Q = W(\bar x)$.

  Call $U(t,x) := (\SS_t U_o) (x)$ and introduce
  $u_o := \partial_x U_o$, $w := \partial_x W$ and
  $u (t,x) := (\ss_t u_o)(x)$. Let $\gamma$ be the minimal backward
  characteristic emanating from $(T, \bar x)$, so that
  $\gamma (t) = \bar x + (t-T) \, f'\left(w (\bar x-)\right)$ and
  $u \left(t, \gamma (t)-\right) = w (\bar x-)$ by~\cite[Theorem~3.2
  and Theorem~3.3]{Dafermos77}. Clearly, by~\eqref{eq:15},
  $p (\bar x-) = \gamma (0)$.

  By Proposition~\ref{prop:sol}
  and~\cite[Theorem~1.1]{KarlsenRisebro2002}, there exists
  $c \in \reali$ such that for $(t,x) \in [0,T] \times \reali$
  \begin{eqnarray*}
    U (t,x)
    & =
    & \int_{\gamma (t)}^x u (t,\xi) \d\xi
      +
      \int_0^t \left(
      \dot\gamma (\tau) \, u\left(\tau,\gamma (\tau)-\right)
      -
      f\left(u\left(\tau, \gamma (\tau)-\right)\right)
      \right)
      \d\tau
      +
      c
    \\
    & =
    & \int_{\gamma (t)}^x u (t,\xi) \d\xi
      +
      t
      \left(
      f'\left(w (\bar x-)\right) \, w (\bar x-)
      -
      f\left(w (\bar x-)\right)
      \right)
      +
      c\,.
  \end{eqnarray*}
  Direct evaluations of the latter expression above yield
  \begin{displaymath}
    W (\bar x)
    =
    U (T, \bar x)
    =
    T \, f^*\left(\frac{\bar x - p (\bar x-)}{T}\right) +c
    \quad \mbox{ and } \quad
    U_o \left(p (\bar x-)\right)
    =
    U\left(0, p (\bar x-)\right)
    =
    c \,.
  \end{displaymath}
  so that $W (\bar x) = Q$, completing the proof of this part.

  \paragraph{If~\ref{thm:4:1} and~\ref{thm:4:2} hold, then
    $U_o \in \II_T (W)$.}

  If~\ref{thm:4:1} holds, then clearly $U_o$
  satisfies~\ref{thm:3:1}. Moreover, by~\ref{thm:4:2}, if $\bar x$ is
  a point of jump of $p$,
  \begin{eqnarray*}
    U_o\left(p (\bar x\pm)\right)
    + T \, f^*\left(\frac{\bar x - p (\bar x\pm)}{T}\right)
    & \leq
    & U_o (y) + T \, f^*\left(\frac{\bar x-y}{T}\right)
    \\
    \dfrac{U_o\left(p (\bar x\pm)\right) - U_o (y)}{T}
    & \leq
    & f^*\left(\frac{\bar x-y}{T}\right)
      -
      f^*\left(\frac{\bar x - p (\bar x\pm)}{T}\right)
  \end{eqnarray*}
  which implies~\ref{thm:3:2}. Then, Theorem~\ref{thm:3} applies and
  ensures that $\partial_x U_o\in \ii_T (\partial_x
  W)$. By~\cite[Theorem~1.1]{KarlsenRisebro2002}, it follows that
  $\partial_x \SS_T U_o = \ss_T \partial_x U_o = \partial_x W$ and,
  hence, there is a constant $c \in \reali$ such that
  $\ss_T U - W = c$. Thus, $\SS_T (U_o + c) = \SS_T U_o + c = W$ and
  $(U_o + c) \in \II_T (W)$ so that $(U_o+c)$ satisfies the equality
  in~\ref{thm:4:2}, as proved in the previous claim. By assumption,
  also $U_o$ satisfies the equality in~\ref{thm:4:2}, hence $c=0$,
  completing the proof.
\end{proofof}

\section{Proofs Related to \S~\ref{subs:Geo}}
\label{sec:proof:Geo}

\begin{proofof}{Proposition~\ref{prop:top}}
  We prove the different parts separately.

  \paragraph{Proof of~\ref{prop:top:1}:}The strong $\L1$ closure of
  $\ii_T (w)$ directly follows from the strong $\L1$ continuity of the
  semigroup generated by~\eqref{eq:1}, see~\cite[Chapter~6,
  \S~4]{BressanLectureNotes}.

  \paragraph{Proof of~\ref{prop:top:2}:}To prove that $\ii_T (w)$ has
  empty interior, fix $u_o$ in $\ii_T (w)$ and use the
  characterization of $\ii_T (w)$ provided by
  Theorem~\ref{thm:3}. Note that by Proposition~\ref{prop:D}, there
  exists an $\bar x \in \reali$ such that either~\ref{thm:3:1}
  or~\ref{thm:3:2} in Theorem~\ref{thm:3} holds.

  \subparagraph{Let~\ref{thm:3:1} hold at a given
    $\bar x \in \reali$.}  Define the sequence of initial data
  \begin{displaymath}
    u_o^n (x)
    :=
    u_o (x) + \caratt{]p(\bar x) - 1/n, p (\bar x) + 1/n[} (x) \,.
  \end{displaymath}
  Clearly, $u_o^n \to u_o$ strongly in $\L1$ as $n \to +\infty$. On
  the other hand, with reference to~\eqref{eq:18} and choosing $U_o^n$
  so that $\partial_x U_o^n = u_o^n$, we have that for $y$
  sufficiently near to $x$,
  \begin{displaymath}
    \dfrac{U_o^n\left(p (y)\right) - U_o^n\left(p (\bar x)\right)}%
    {p (y) - p (\bar x)}
    =
    \dfrac{U_o\left(p (y)\right) - U_o\left(p (\bar x)\right)}%
    {p (y) - p (\bar x)}
    +
    1
    \underset{y \to \bar x}{\to}
    w (\bar x) + 1\,,
  \end{displaymath}
  showing that $u_o^n \not \in \ii_T (w)$ by Theorem~\ref{thm:3}.

  \subparagraph{Let~\ref{thm:3:2} hold at a given
    $\bar x \in \reali$.}  Define the sequence of initial data
  \begin{displaymath}
    u_o^n (x)
    :=
    u_o (x) - C \; \caratt{[p (\bar x-), p (\bar x-) + 1/n]} (x) \,,
  \end{displaymath}
  for a sufficiently large constant $C$ that is explicitly chosen
  in~\eqref{eq:34}. We have that for
  $y \in \left] p (\bar x-), p (\bar x-) + \frac{1}{n}\right[$,
  \begin{eqnarray*}
    &
    & \dfrac{U^n_o (y) - U^n_o\left(p (\bar x-)\right)}{T}
      -
      f^*\left(\dfrac{\bar x-p (\bar x-)}{T}\right)
      +
      f^*\left(\dfrac{\bar x - y}{T}\right)
    \\
    & =
    & \dfrac{U_o (y) - U_o\left(p (\bar x-)\right)}{T}
      -
      f^*\left(\dfrac{\bar x-p (\bar x-)}{T}\right)
      +
      f^*\left(\dfrac{\bar x - y}{T}\right)
      -
      C \, \dfrac{y-p (\bar x-)}{T}
    \\
    & \leq
    & \left(\norma{u_o}_{\L\infty (\reali; \reali)} - C\right)
      \dfrac{y-p (\bar x)}{T}
      +
      f^*\left(\dfrac{\bar x - y}{T}\right)
      -
      f^*\left(\dfrac{\bar x-p (\bar x-)}{T}\right)
    \\
    & \leq
    & \left(
      \norma{u_o}_{\L\infty (\reali; \reali)}
      -
      g\left(\frac{\bar x - p (\bar x-)}{T}\right)
      -
      C
      \right)
      \dfrac{y-p (\bar x)}{T}
      +
      o \left(y - p (\bar x-)\right)
      \qquad \mbox{ as } y \to p (\bar x -) \,,
  \end{eqnarray*}
  so that as soon as $C$ is chosen satisfying
  \begin{equation}
    \label{eq:34}
    C > \norma{u_o}_{\L\infty (\reali; \reali)}
    -
    g\left(\frac{\bar x - p (\bar x-)}{T}\right)
  \end{equation}
  we have
  \begin{equation}
    \label{eq:30}
    \dfrac{U^n_o (y) - U^n_o\left(p (\bar x-)\right)}{T} -
    f^*\left(\dfrac{\bar x-p (\bar x-)}{T}\right) + f^*\left(\dfrac{\bar
        x - y}{T}\right)
    <
    0
  \end{equation}
  for all $y$ sufficiently near to and larger than $p (\bar x-)$.
  But~\eqref{eq:30} contradicts~\ref{thm:3:2} in Theorem~\ref{thm:3},
  so we obtain that $u_o^n \not\in \ii (w)$, although $u^n_o \to u_o$
  in $\L1 (\reali;\reali)$ and $u_o \in \ii (w)$.
\end{proofof}

The following remark provides a basic linear algebra observation of
use in the subsequent proof of Proposition~\ref{prop:geo}.

\begin{remark}
  \label{rem:algebra}
  Let $V$ be a vector spaces,
  $L_\alpha, \Lambda_\alpha \colon V \to \reali$ be linear maps and
  $m_\alpha, \mu_\alpha$ be real numbers, with $\alpha$ varying in a
  suitable set of indices $I$. Assume there exists a unique
  $v^* \in V$ such that for all $\alpha \in I$,
  $L_\alpha v^* = m_\alpha$ and $\Lambda_\alpha v^* = \mu_\alpha$.
  Then, the set
  $\{v \in V \colon \forall \alpha \in I \quad L_\alpha v = m_\alpha
  \mbox{ and } \Lambda_\alpha v \geq \mu_\alpha\}$ is a cone with
  vertex at $v^*$, which is its unique extremal point.
\end{remark}

\begin{proofof}{Proposition~\ref{prop:geo}}
  We split the proof in different steps.

  \paragraph{Proof of~\ref{prop:geo:1}:} Consider the two implications
  separately.

  \subparagraph{If $w \in \C0 (\reali; \reali)$, then $\ii_T (w)$ is a
    singleton.} Let $w \in \C0 (\reali; \reali)$. Then, the set
  $X_{ii}$ in~\eqref{eq:22} is empty. By Proposition~\ref{prop:D},
  $\mathcal{L} (\reali \setminus X_i) =0$.  For any
  $u_o \in \ii_T (w)$ and any $\bar x$ in $X_i$,
  \begin{displaymath}
    \begin{array}{r@{\,}c@{\,}l@{\qquad}l}
      w (\bar x)
      & =
      & \displaystyle
        \lim_{y \to \bar x} \dfrac{1}{p (y) - p (\bar x)} \,
        \int_{p (\bar x)}^{p (y)} u_o (\xi) \d\xi
      & \mbox{[by Theorem~\ref{thm:2}]}
      \\
      & =
      & \displaystyle
        \lim_{r \to 0} \dfrac{1}{r} \, \int_{p(\bar x)}^{p(\bar x) + r} u_o (\xi) \d\xi
      & \mbox{[since } p' (\bar x) > 0
        \mbox{ and } p\in \C0 (\reali; \reali)\mbox{]}
      \\
      & =
      & u_o\left(p (\bar x)\right)
      & \mbox{[for the precise representative~\eqref{eq:37} of } u_o \mbox{]}
    \end{array}
  \end{displaymath}
  Indeed, we used above the fact that if $p' (\bar x) > 0$ and
  $p\in \C0 (\reali; \reali)$, then for all $r>0$ sufficiently small,
  there exists a $y_r$ such that $r = p (y_r) - p (\bar x)$ and
  $y_r \to \bar x$ as $r\to 0$.

  \subparagraph{If $w$ admits a point of discontinuity $\bar x$, then
    $\ii_T (w)$ is not a singleton. } A first element of $\ii_T (w)$
  is the map $u_o^{*}$ defined in Theorem~\ref{thm:1}. A second map
  can be constructed prolonging the shock at $\bar x$ backward to
  $0$. To this aim, we define
  \begin{displaymath}
    u^{\sharp}_o (x)
    :=
    \left\{
      \begin{array}{lrcl}
        u^*_o (x)
        & x
        & \leq
        & p (\bar x-)
        \\
        w (\bar x-)
        & x
        & \in
        & \left] p (\bar x-) , x_\sharp \right]
        \\
        w (\bar x+)
        & x
        & \in
        & \left] x_\sharp, p (\bar x+) \right]
        \\
        u^*_o (x)
        & x
        & >
        & p (\bar x+)
      \end{array}
    \right.
    \quad \mbox{ where } \quad
    \begin{array}{r@{\;}c@{\;}l}
      \lambda^\sharp
      & =
      & \dfrac{f \left(w (\bar x+)\right) - f\left(w (\bar x -)\right)}%
        {w (\bar x+) - w (\bar x-)}
      \\
      x_\sharp
      & =
      & \bar x
        -
        \lambda^\sharp \, T \,.
    \end{array}
  \end{displaymath}
  We check that $u_o^\sharp$ satisfies~\ref{thm:2:ii} in
  Theorem~\ref{thm:2} at $\bar x$. To this aim, set
  $v^\sharp = g (\lambda^\sharp)$ and compute
  \begin{displaymath}
    \frac{1}{T} \int_{x-T f'(w(x-))}^{x-T f'(v)}{u_o(\xi)\d{\xi}}
    =
    \left\{
      \begin{array}{lr@{\,}c@{\,}l}
        \left(f'\left(w (\bar x-)\right) - f' (v)\right) w (\bar x-)
        & v
        & <
        & v^\sharp
        \\[10pt]
        \begin{array}{@{}l@{}}
          \left(f'\left(w (\bar x-)\right) - f' (v^\sharp)\right) w (\bar x-)
          \\
          \quad +
          \left(f' (v^\sharp) - f'\left(v\right) \right) w (\bar x+)
        \end{array}
        & v
        & >
        & v^\sharp
      \end{array}
    \right.
  \end{displaymath}
  so that, with reference to~\ref{thm:2:ii} in Theorem~\ref{thm:2},
  denote
  \begin{displaymath}
    \Delta
    :=
    \frac{1}{T} \int_{\bar x-T f'(w(\bar x-))}^{\bar x-T f'(v)}{u_o(\xi)\d{\xi}}
    -
    \left(w(\bar x-) \, f'\left(w(\bar x-)\right)- f\left(w(\bar x-)\right)\right)
    +
    \left(v \, f'(v)-f(v)\right)
  \end{displaymath}
  and, for $v < v^\sharp$ by the convexity of $f$ we obtain
  \begin{displaymath}
    \Delta
    =
    f\left(w(x-)\right)
    - f(v)
    - f' (v) \left(w (\bar x-) - v\right)
    \geq 0 \,.
  \end{displaymath}
  If $v > v^\sharp$ , we use the identity
  $f' (v^\sharp) \left(w (\bar x+) - w (\bar x-)\right) = f \left(w
    (\bar x+)\right) - f\left(w (\bar x -)\right)$ to obtain
  \begin{eqnarray*}
    \Delta
    & =
    & f\left(w (\bar x+)\right)
      - f\left(w (\bar x -)\right)
      + f'\left(w (\bar x-)\right) \, w (\bar x-)
      - f' (v) \, w (\bar x +)
    \\
    &
    & \quad
      - f'\left(w (\bar x-)\right) \, w (\bar x-)
      + f\left(w (\bar x)\right)
      + f' (v) \, v
      - f (v)
    \\
    & =
    & f\left(w (\bar x+)\right)
      - f (v)
      - f' (v) \left(w (\bar x+) - v\right)
    \\
    & \geq
    & 0
  \end{eqnarray*}
  where we used again the convexity of $f$.

  Entirely similar computations apply to the second inequality
  in~\ref{thm:2:ii}. Hence $u_o^\sharp \in \ii_T (w)$ and, since
  $u_o^\sharp \neq u_o^*$, the proof of~\ref{prop:geo:1} is completed.

  \paragraph{Proof of~\ref{prop:geo:2}:} The convexity of $\ii_T (w)$
  directly follows from Lax-Hopf formula~\cite[Theorem~2.1]{Lax}, as
  well as from~\cite[Theorem~6.2]{GosseZuazua} or by direct inspection
  of the conditions provided by Theorem~\ref{thm:2} or
  Theorem~\ref{thm:3}.

  \subparagraph{$\ii_T (w)$ is a cone with $u_o^*$ at its vertex.} (We
  follow the abstract reasoning sketched in Remark~\ref{rem:algebra}.)
  Let $u_o^*$ be as defined in Lemma~\ref{lem:ExUn}. For a
  $u_o \in \ii_T (w) \setminus \{u_o^*\}$, and for all
  $\theta \in \reali^+$, define
  $u_o^\theta := u_o^* + \theta \, (u_o - u_o^*)$ and, passing to
  primitives, $U_o^\theta = U_o^* + \theta \, (U_o - U_o^*)$.

  If $\bar x$ is such that $p$ is differentiable at $\bar x$ and
  $p' (\bar x) > 0$, then by~\ref{thm:3:1} in Theorem~\ref{thm:3}
  and~\ref{lem:ex:1} in Lemma~\ref{lem:ExUn},
  \begin{eqnarray*}
    &
    & \lim_{x \to \bar x}
      \dfrac{U_o^\theta\left(p (x)\right) - U_o^\theta\left(p (\bar x)\right)}%
      {p (\bar x) - p (x)}
    \\
    & =
    & \lim_{x \to \bar x}
      \dfrac{U_o^*\left(p (x)\right) - U_o^*\left(p (\bar x)\right)}%
      {p (\bar x) - p (x)}
      +
      \lim_{x \to \bar x} \theta
      \left(
      \dfrac{U_o\left(p (x)\right) - U_o\left(p (\bar x)\right)}%
      {p (\bar x) - p (x)}
      -
      \dfrac{U_o^*\left(p (x)\right) - U_o^*\left(p (\bar x)\right)}%
      {p (\bar x) - p (x)}
      \right)
    \\
    & =
    & w (x) \,,
  \end{eqnarray*}
  proving that also $U_o^\theta$ satisfies~\ref{thm:3:1} in
  Theorem~\ref{thm:3}.

  Choose now $x \in \reali$ such that $p (x-) < p (x+)$ and compute:
  \begin{eqnarray*}
    &
    & \dfrac{U_o^\theta \left(p (x+)\right) - U_o^\theta(y)}{T}
    \\
    & =
    & \dfrac{U_o^\theta \left(p (x+)\right) - U_o^\theta(y)}{T}
      +
      \theta
      \left(
      \dfrac{U_o \left(p (x+)\right) - U_o(y)}{T}-
      \dfrac{U_o^\theta \left(p (x+)\right) - U_o^\theta(y)}{T}
      \right)
    \\
    & \leq
    & f^*\left(\dfrac{x-y}{T}\right)
      -
      f^*\left(\dfrac{x-p (x+)}{T}\right) .
  \end{eqnarray*}
  An entirely similar computations applies to
  $\frac{U_o^\theta(y) - U_o^\theta \left(p (x+)\right)}{T}$.  We thus
  proved that also $U_o^\theta$ satisfies~\ref{thm:3:1}
  and~\ref{thm:3:2} in Theorem~\ref{thm:3}, hence it belongs to
  $\ii_t (w)$.

  \subparagraph{If $u_o \in \ii_T (w)$ is different from $u_o^*$, then
    it is not an extremal point of $\ii_T (w)$.} This statement
  directly follows from~\ref{prop:geo:3}, which we prove independently
  below.

  \paragraph{Proof of~\ref{prop:geo:3}:}

  Preliminary, note that~\ref{thm:3:2} in Theorem~\ref{thm:3} at $x$
  is equivalent to require that for all $x \in \reali$ such that
  $w (x-) \neq w (x+)$,
  \begin{equation}
    \label{eq:32}
    \begin{array}{@{}r@{\,}c@{\,}l@{}}
      U_o\left(p (x+)\right) + T f^*\left(\dfrac{x-p (x+)}{T}\right)
      & =
      &  U_o\left(p (x-)\right) + T f^*\left(\dfrac{x-p (x-)}{T}\right)
      \\
      \forall y \in \left] p (x-), p (x+) \right[ \qquad
      U_o(y) + T f^*\left(\dfrac{x-y}{T}\right)
      & \geq
      &  U_o\left(p (x-)\right) + T f^*\left(\dfrac{x-p (x-)}{T}\right) \,.
    \end{array}
  \end{equation}

  Fix $N$ and $u_o \in \ii_T (w)$ as in the statement
  of~\ref{prop:geo:3}. Let $U_o$ be a primitive of
  $u_o$. By~\ref{prop:geo:1}, we may assume that there exists an
  $x \in \reali$ such that~\eqref{eq:32} applies. If $u_o \neq u_o^*$,
  then for a suitable $x \in \reali$ and
  $\bar y \in \left] p (x-), p (x+) \right[$, the strict inequality
  has to hold in the latter relation above computed at $\bar
  y$. Hence, there exist positive $\eta$ and $\epsilon$ such that
  \begin{equation}
    \label{eq:17}
    \!\!
    \forall y \in \left]\bar y - \eta, \bar y + \eta \right[
    \quad
    \left\{
      \begin{array}{l@{}}
        y \in \left]p (x-), p (x+) \right[
        \mbox{, \quad and}
        \\
        U_o(y) + T f^* \! \left(\dfrac{x-y}{T}\right)
        \geq
        U_o\left(p (x-)\right) + T f^* \! \left(\dfrac{x-p (x-)}{T}\right)
        + \epsilon \,.
      \end{array}
    \right.
    \!\!\!
  \end{equation}
  Define now
  \begin{equation}
    \label{eq:5:V}
    \forall k\in \{1,\ldots,N\},\qquad
    y_k := \bar y + \frac{\eta}{N} (2k-1-N) \,.
  \end{equation}
  The intervals
  $\left] y_k - \frac{\eta}{N}, y_k+\frac{\eta}{N}\right]$ constitute
  a partition of $\left]\bar y-\eta, \bar y+\eta\right]$.

  Define $A_1, \ldots, A_N$ by
  \begin{equation}
    \label{eq:6:V}
    A_k(y):=
    \begin{cases}
      0, &\text{if }y\leq y_k-\frac{\eta}{N}
      \\
      \epsilon \left(1+ \frac{N}{\eta} (y-y_k)\right) , & \text{if
      }y_k-\frac{\eta}{N}\leq y\leq y_k
      \\
      \epsilon\left(1+ \frac{N}{\eta} (y_k-y)\right) , & \text{if }y_k
      \leq y\leq y_k+\frac{\eta}{N}
      \\
      0, & \text{if }y\leq y_k+\frac{\eta}{N}
    \end{cases}
  \end{equation}
  and then let $V_k := U_o - A_k$ for $k=1, \ldots, N$. Define
  $v_k := \partial_x V_k$ for $k = 1, \ldots, N$. It is easy to see
  using~\eqref{eq:32}, \eqref{eq:17} and~\eqref{eq:5:V} that
  $v_k\in \ii_T(w)$.  Finally, define
  $V_0:= U_o + \sum_{k=1}^N \left( U_o - V_k \right)$ and
  $v_0 := \partial_x V_o$. As above, $v_0\in \ii_T(w)$, since
  $V_k\geq U_o$ and $V_0=U_o$ outside $[\bar y-\eta,\bar
  y+\eta]$. Using the definition of $V_0$, condition~\eqref{eq:10} is
  trivially satisfied.

  Let us now consider scalars $\lambda_1, \ldots, \lambda_N$ such that
  \begin{equation}
    \label{eq:8:V}
    \sum_{k=1}^N \lambda_k \, \left(v_k-v_0\right) = 0 \,.
  \end{equation}
  Hence, $\sum_{k=1}^N \lambda_k \, \left(V_k-V_0\right) = C$ for a
  suitable $C \in \reali$. By the definitions of $V_0,\ldots,V_N$,
  $\forall k\in\{1,\ldots,N\}$, $V_k-V_0 = - A_k -\sum_{j=1}^N A_j$.
  So now~\eqref{eq:8:V} becomes
  $\sum_{k=1}^N \left( \lambda_k + \sum_{j=1}^N \lambda_j\right) A_k =
  -C$.  The $A_k$ have compact supports, so that $C=0$. Their supports
  are also disjoint, hence $\forall k \in \{1,\ldots,N\}$,
  $\lambda_k + \sum_{j=1}^N \lambda_j = 0$, from which it is clear
  that $\forall k\in \{1,\ldots,N\}$, $\lambda_k= 0$.
\end{proofof}

\medskip

\noindent\textbf{Acknowledgment:} Both authors thank Enrique Zuazua
for having suggested the problem and for useful discussions during the
\emph{``VII Partial Differential Equations, Optimal Design and
  Numerics''} that took place in Benasque. The second author was
supported by the ANR project \emph{``Finite4SOS''} (ANR~15-CE23-0007).

\medskip

{

  \small

  \bibliography{hcl_hj}

  \bibliographystyle{abbrv}

}

\bigskip
\end{document}